\definecolor{darkgreen}{rgb}{0,0.4,0}
\definecolor{BrickRed}{rgb}{0.65,0.08,0}
\theoremstyle{plain}
\newtheorem{theorem}{Theorem}[section]
\newtheorem{corollary}[theorem]{Corollary}
\newtheorem{lemma}[theorem]{Lemma}
\newtheorem{proposition}[theorem]{Proposition}
\newtheorem{conjecture}[theorem]{Conjecture}
\theoremstyle{definition}
\newtheorem{definition}[theorem]{Definition}
\theoremstyle{remark}
\newtheorem{remark}[theorem]{Remark}
\newcommand{\E}{\mathbb{E}}
\newcommand{\V}{\mathbb{V}}
\newcommand{\KK}{\mathbb{K}}
\newcommand{\QQ}{\mathbb{Q}}
\newcommand{\Nc}{\mathcal{N}}
\newcommand{\Rc}{\mathcal{R}}
\newcommand{\proba}{\mathds{P}}
\newcommand{\bigO}{\mathcal{O}}
\newcommand{\Pc}{\mathcal{P}}
\newcommand{\integers}{\mathds{Z}}
\newcommand{\mF}{\mathcal{F}}
\newcommand*{\eg}{\textit{e.g.,}\@\xspace}
\newcommand*{\ie}{\textit{i.e.,}\@\xspace}
\newcommand*{\resp}{\textit{resp.}\@\xspace}
\newcommand{\vect}[1]{\bm{#1}}
\newcommand{\va}{\vect{a}}
\newcommand{\vb}{\vect{b}}
\newcommand{\vc}{\vect{c}}
\newcommand{\vr}{\vect{r}}
\newcommand{\vs}{\vect{s}}
\newcommand{\vt}{\vect{t}}
\newcommand{\vw}{\vect{w}}
\newcommand{\id}{\operatorname{Id}}
\newcommand{\indic}{\mathds{1}}
\newcommand{\reach}{\operatorname{reach}}
\newcommand{\integersets}{\mathcal{F}}
\newcommand{\mA}{\mathcal{A}}
\newcommand{\mB}{\mathcal{B}}
\newcommand{\mC}{\mathcal{C}}
\newcommand{\lowercond}{\operatorname{lowercondition}}
\newcommand{\uppercond}{\operatorname{uppercondition}}
\newcommand{\Ijbottom}{I_{\operatorname{bottom}}}
\newcommand{\Ijtop}{I_{j, \operatorname{top}}}
\newcommand*{\rone}{{\romannumeral 1}\@\xspace}
\newcommand*{\rtwo}{{\romannumeral 2}\@\xspace}
\newcommand*{\rthree}{{\romannumeral 3}\@\xspace}
\def\inlaw{ \stackrel{\scriptstyle \mathcal L}{\longrightarrow}  } 
\newcommand{\OEIS}[1]{\href{http://oeis.org/#1}{OEIS~#1}}
\newcommand{\OEISs}[1]{\href{http://oeis.org/#1}{#1}}
\newcommand{\wo}{y} 
\newcommand{\wt}[1]{p_{#1}}
\newcommand{\Bb}{B_2}
\newcommand{\Z}{\mathbb{Z}}
\newcommand{\conjugate}{\overline}
\newcommand{\type}[5]{\operatorname{type}(#1, #2, #3, #4, #5)}
\newcommand{\anonymous}[2]{#1}
\newcommand{\usub}{%
   \setbox0=\hbox{$\hat{-}$}%
   \setbox1=\hbox{$-$}%
   \dimen0=1.35\ht1%
  \smash{\mskip4mu%
  \raise\dimen0\rlap{%
      \begin{turn}{180}$\hat{\phantom{-}}$\end{turn}}%
      -%
      }\mskip4mu%
{\vphantom{\widehat{-}}}}
\newcommand{\msub}{%
  \setbox0=\hbox{$\hat{-}$}%
  \setbox1=\hbox{$-$}%
  \dimen0=-0.5\ht1%
  \smash{\mskip4mu%
  \raise\dimen0\rlap{%
      $\hat{\phantom{-}}$}%
      -}\mskip4mu%
{\vphantom{\widehat{-}}}}
\newcommand{\RomanNumeralCaps}[1]{\MakeUppercase{\romannumeral #1}}
\newcommand{\romI}{\operatorname{\RomanNumeralCaps{1}}}
\newcommand{\romII}{\operatorname{\RomanNumeralCaps{2}}}
\def\input@path{{./}{Figures/}}  
\begin{document}

\title{Combinatorics of nondeterministic walks}
\author{\anonymous{\'Elie de Panafieu\thanks{Nokia Bell Labs and Lincs, France} \\
\and
Michael Wallner\thanks{TU Wien, Austria}}{Authors}}
\date{}

\maketitle

\begin{abstract} 
This paper introduces \emph{nondeterministic walks}, a new variant of one-dimensional discrete walks. The main difference from classical walks is that its nondeterministic steps consist of \emph{sets of steps} from a predefined set, allowing for parallel exploration of all possible extensions.

We discuss in detail two particular nondeterministic step sets inspired by Dyck and Motzkin walks and show that several nondeterministic classes of lattice paths, such as nondeterministic bridges, excursions, and meanders, are algebraic. The key concept is the generalization of the ending point of a walk to its reachable points, \ie a set of ending points. We extend our results to general step sets: We show that nondeterministic bridges and several subclasses of nondeterministic meanders are always algebraic. We conjecture the same is true for nondeterministic excursions, and we provide Python and Maple packages to support our conjecture.

This research is motivated by the study of networks involving encapsulation and decapsulation of protocols. Our results are obtained using generating functions, analytic combinatorics, and additive combinatorics.

\bigskip

\textbf{Keywords.} Random walks, analytic combinatorics, additive combinatorics, generating functions, limit laws, networking, encapsulation.
\end{abstract}

\tableofcontents

		\section{Introduction}

Lattice paths are among the most natural and extensively studied objects in combinatorics, appearing across a wide range of fields including probability theory, computer science, biology, chemistry, and physics~\cite{Kn69,HOP17,BP08,BaWa17}.
One of their key features is their versatility as models to capture natural phenomena, as in up-to-date models of certain polymers~\cite{vRPR08}.
In this paper we continue this success story and present a new model of lattice paths capturing  
the encapsulation and decapsulation of protocols over networks.
The key to understand them is the ability to follow all trajectories in the network \emph{simultaneously}. 
For this purpose, we generalize the class of lattice paths to so called \emph{nondeterministic lattice paths}. 
In our context, this word does not mean ``\emph{random}''.
Instead it is understood in the same sense
as for automata and Turing machines.
A process is nondeterministic
if several branches are explored in parallel,
and the process is said to end in an accepting state
if at least one of those branches ends in an accepting state.
Before we give a precise definition, we recall the classical model of lattice paths we build on~\cite{BaFl02}.


  \paragraph{Classical walks.}

%
Given a finite set $S \subset \integers$ 
of integers, called the \emph{steps}, 
a \emph{walk} of length $n$ is a sequence
$v = (v_1, \ldots, v_n)$ of steps $v_i \in S$. 
In this paper we will always assume that our walks start at the origin.
Its \emph{endpoint} is equal to
the sum of its steps $\sum_{i=1}^n v_i$.
As illustrated in Figure~\ref{fig:walk},
a walk can be visualized by its \emph{geometric realization} as a sequence of ordinates $(\wo_0,\wo_1,\dots,\wo_n)$.
Since the walk starts at the origin it starts at $\wo_0 = 0$,
and after appending $k$ steps it has reached $\wo_k := \sum_{i=1}^k v_i$.
Thus, the relation $v_i = \wo_i - \wo_{i-1}$ directly connects the steps and the ordinates of a walk, and shows their equivalence.

We distinguish four different types of walks:
\begin{compactitem}
    \item A \emph{bridge} is a walk with endpoint $\wo_n=0$.
    \item A \emph{meander} is a walk never crossing the $x$-axis, \ie $\wo_j \geq 0$ for all $j=0,\ldots,n$.
    \item An \emph{excursion} is a meander with endpoint $\wo_n=0$.
    %
\end{compactitem}

  \paragraph{Nondeterministic walks.}

We are now ready to generalize classical walks to nondeterministic walks.
The only difference with respect to classical walks is that steps are now \emph{sets} of integers.

\begin{definition}[Nondeterministic walks]
An \emph{N-step} $\vs \subset \integers$ is a non-empty finite set of integers.
Given a set~$S$ of N-steps,
an \emph{N-walk} of length $n$ is a sequence $w = (\vw_1, \ldots, \vw_n)$ of N-steps $\vw_i \in S$.
\end{definition}

\noindent In addition, as for classical walks, we always assume that they start at the origin and we distinguish different families.

\begin{definition}[Families of N-walks] \label{def:types_of_n_walks}
An N-walk $w = (\vw_1, \ldots, \vw_n)$ and a classical walk $v = (v_1, \ldots, v_n)$
are \emph{compatible} if they have the same length $n$ and for each $1 \leq i \leq n$,
the $i^{\text{th}}$ step is included in the $i^{\text{th}}$ N-step, \ie $v_i \in \vw_i$.
An \emph{N-bridge} (\resp \emph{N-meander}, \resp \emph{N-excursion})
is an N-walk compatible with at least one bridge
(\resp meander, \resp excursion).
Thus, N-excursions are particular cases of N-meanders.
\end{definition}

\noindent The endpoints of classical walks are central to the analysis.
We define their nondeterministic analogues. 

\begin{definition}[Reachable points] \label{def:reachable_points}
The \emph{reachable points} of an N-walk (\resp N-meander) are the endpoints of all compatible walks (\resp meanders).
The minimum (\resp maximum) reachable point of an N-walk $w$ is denoted by $\min(w)$ (\resp $\max(w)$).
The minimum (\resp maximum) reachable point of an N-meander $w$ is denoted by $\min^+(w)$ (\resp $\max^+(w)$), to emphasize the positivity constraint.
\end{definition}
Note that the reachable endpoints of an N-meander are nonnegative.
The geometric realization of an N-walk
is the sequence of its reachable points after $j$ steps for $j=0,\dots,n$, as shown in Figure~\ref{fig:walks}.

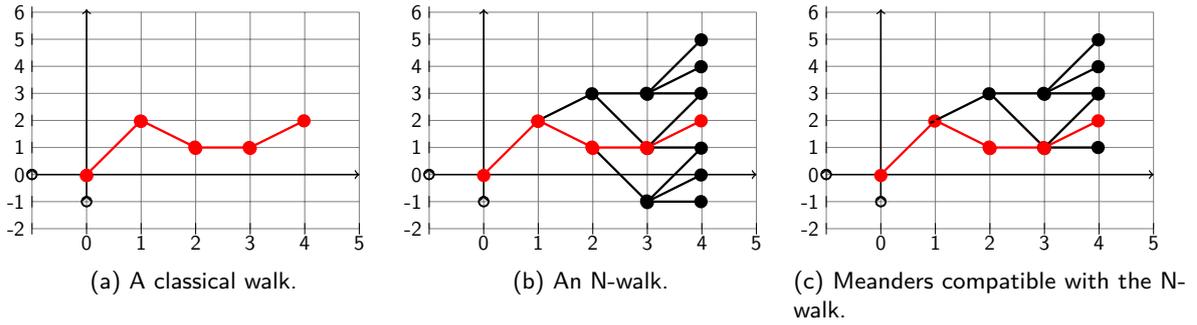
\begin{figure}[ht]
	\centering
	\subfloat[A classical walk.]{%
    \resizebox{0.32\textwidth}{!}{\begin{tikzpicture}[shorten >=-3pt,shorten <=-3pt, x=1cm, y=0.5cm]
	\draw [very thin, gray,ystep=1] (-1,-2) grid (5,6) ;

	\draw [-*, very thick,red] (0,0) edge (1,2) ;
	
	\draw [-*, very thick, red] (1,2) edge (2,1) ;
	
	\draw [-*, very thick, red] (3,1) edge  (4,2) ;
	\draw [-*, very thick, red] (2,1)  edge (3,1) ;
	
	\draw [o->,thick] (-1,0) edge (4.9,0);
	\draw [o->,thick] (0,-1) edge (0,5.9);

	\foreach \x in {-2,...,6}
	\draw (-1,\x)--(-1,\x)  node[left] {\x};
	
	\foreach \y in {0,...,5}
	\draw (\y,-2)--(\y,-2)  node[below] {\y};
	\draw [-*, very thick, red] (0,0) ;	
	\end{tikzpicture}}
		\label{fig:walk}}
        \hfill
\subfloat[An N-walk.]{%
    \resizebox{0.32\textwidth}{!}{\begin{tikzpicture}[shorten >=-3pt,shorten <=-3pt, x=1cm, y=0.5cm]
	\draw [very thin, gray,ystep=1] (-1,-2) grid (5,6) ;

	\draw [-*, very thick] (0,0) edge[red] (1,2) (1,2) edge (2,3) (2,3) edge (3,3) edge[-] (3,1) (3,3) edge (4,5) edge (4,4) edge (4,3);
	
	\draw [-*, very thick, red] (1,2) edge (2,1) ;
	
	\draw [-*, very thick] (3,1) edge[-] (4,1) edge[red] (4,2) edge[-] (4,3) ;
	\draw [-*, very thick] (2,1)  edge (3,-1) edge[red] (3,1) (3,-1) edge (4,-1) edge (4,0) edge (4,1);
	
	\draw [o->,thick] (-1,0) edge (4.9,0);
	\draw [o->,thick] (0,-1) edge (0,5.9);

\foreach \x in {-2,...,6}
	\draw (-1,\x)--(-1,\x)  node[left] {\x};
	
\foreach \y in {0,...,5}
\draw (\y,-2)--(\y,-2)  node[below] {\y};
	\draw [-*, very thick, red] (0,0) ;	
	\draw [-*, very thick, red] (2,1) ;	
	\draw [-*, very thick, red] (3,1) ;	
	\end{tikzpicture}}
		\label{fig:n_walk}}
        \hfill
	\subfloat[Meanders compatible with the N-walk.]{%
    \resizebox{0.32\textwidth}{!}{\begin{tikzpicture}[shorten >=-3pt,shorten <=-3pt, x=1cm, y=0.5cm]
	\draw [very thin, gray,ystep=1] (-1,-2) grid (5,6) ;
	
	\draw [-*, very thick] (0,0) edge[red] (1,2) (1,2) edge (2,3) (2,3) edge (3,3) edge[-] (3,1) (3,3) edge (4,5) edge (4,4) edge (4,3);
	
	\draw [-*, very thick, red] (1,2) edge (2,1) (2,1) edge (3,1) ;
	
	\draw [-*, very thick] (3,1) edge (4,1) edge[red] (4,2) edge (4,3) ;

	\draw [o->,thick] (-1,0) edge (4.9,0);
	\draw [o->,thick] (0,-1) edge (0,5.9);

	\foreach \x in {-2,...,6}
	\draw (-1,\x)--(-1,\x)  node[left] {\x};
	
	\foreach \y in {0,...,5}
	\draw (\y,-2)--(\y,-2)  node[below] {\y};
	\draw [-*, very thick, red] (0,0) ;	
	\draw [-*, very thick, red] (2,1) ;	
	\draw [-*, very thick, red] (3,1) ;	
	\end{tikzpicture}}
		\label{fig:n_meander}} 
    \caption{Geometric realization of the classical meander $v = (2,-1,0,1)$, 
    the N-walk $w = \left(\{2\}, \{-1,1\}, \{-2,0\}, \{0,1,2\}\right)$,
    and the N-meander with the same N-steps. 
    As highlighted in red, $v$ is compatible with $w$.}
   \label{fig:walks}
\end{figure}


As in the classical setting, the N-steps of an N-walk can be weighted. 
Then, each N-step is associated with a weight $p_i$ and the weight of the N-walk is the product of the weights associated with its N-steps.
In many cases, these weights $p_i$ will represent probabilities, \ie $p_i \geq 0$ and $p_1 + \dots + p_m=1$.
We talk about the \emph{unweighted model} when all weights are equal to one: $p_i = 1$.

\subsection{Main results}

Our main results are the analysis of the asymptotic number of nondeterministic walks of the Dyck and Motzkin type with step sets 
\[
    \Big\{ \{-1\}, \{1\}, \{-1, 1\} \Big\} 
    \qquad \text{ and }  \qquad
    \Big\{\{-1\}, \{0\}, \{1\}, \{-1,0\}, \{-1,1\}, \{0,1\}, \{-1,0,1\} \Big\},
\]
 respectively. The results for the unweighted case are summarized in Table~\ref{tab:compareDyckMotzkin}.
These results are derived using generating functions and singularity analysis (see Section~\ref{sec:introAC} for a brief introduction and \cite{FS09} for a comprehensive treatment).
Remarkably, for N-bridges, N-meanders, and N-excursions the leading order and the lower-order terms have different exponential growth rates.
This phenomenon arises due to a dominant polar singularity followed by a square root singularity.
Moreover, observe that for different types the leading orders only differ in the multiplicative constants. 
This is in sharp contrast to classical walks, in which we observe polynomial differences~\cite{BaFl02}. 
For example the number of classical bridges of size $n$ is of order $1 / \sqrt{n}$ compared to classical walks. 
Here, this polynomial change in order of magnitude is only visible in the lower order terms.
%
In particular, the limit probabilities for a Dyck N-walk of even length to be an N-bridge, an N-meander, or an N-excursion, are $1$, $\frac{1}{2}$, or $\frac{1}{4}$, and for Motzkin N-walks $1$, $\frac{3}{4}$, or $\frac{9}{16}$, respectively.

We also explore general N-steps and prove that the generating function of N-bridges with respect to length, minimum, and maximum reachable point is always algebraic in Theorem~\ref{th:general_bridges}.
We conjecture that the generating function of N-excursions is always algebraic and provide proofs in some specific cases in Section~\ref{sec:general_NExcursions}.

\renewcommand{\arraystretch}{1.5}
\begin{table*}[ht]
	\begin{center}
	\begin{tabular}{ccc}
		\toprule
        Type  & Dyck N-steps  & Motzkin N-steps\\
        & 
        $\Pc(\{-1,1\}) \setminus \emptyset $ &
        $\Pc(\{-1,0,1\}) \setminus \emptyset $\\        
		\midrule  
			N-Walk & 
            $3^n$ & 
            $7^n$ 
            \\
			N-Bridge & 
            $\frac{1+(-1)^n}{2} \left( 3^n - \frac{2\sqrt{2}}{\sqrt{\pi}} \frac{8^{n/2}}{\sqrt{n}} + \bigO\left(\frac{8^{n/2}}{n^{3/2}}\right) \right)$ & 
            $7^n - \sqrt{\frac{3}{\pi}} \frac{6^n}{\sqrt{n}} + \bigO\left(\frac{6^n}{n^{3/2}}\right)$ 
            \\
            N-Meander & 
            $\frac{3^n}{2} + \frac{3\sqrt{2}(1+(-1)^n) + 4(1-(-1)^n)}{\sqrt{\pi}} \frac{8^{n/2}}{\sqrt{ n^3}} + \bigO\left(\frac{8^{n/2}}{n^{5/2}}\right)$ & 
            $\frac{3}{4}7^n + \frac{3\sqrt{3}}{2 \sqrt{\pi}} \frac{6^n}{\sqrt{n^3}} + \bigO\left(\frac{6^n}{n^{5/2}}\right)$
            \\
            N-Excursion & $\frac{1+(-1)^n}{2}\left( \frac{3^n}{4} + 4\sqrt{2} \frac{8^{n/2}}{\sqrt{\pi n^3}}  + \bigO\left(\frac{8^{n/2}}{n^{5/2}}\right) \right)$ & 
            $\frac{9}{16}7^n - \gamma \frac{6^n}{\sqrt{\pi n^3}} + \bigO\left(\frac{6^n}{n^{5/2}}\right)$
            \\
		\bottomrule
  \end{tabular}
\end{center}
\caption{The asymptotic number of nondeterministic unweighted (all weights equal to one) Dyck and Motzkin N-walks, N-bridges, N-meanders, and N-excursions
with $n$ steps.
The constant $\gamma \approx 0.6183$ is an algebraic number defined as the positive real solution of $1024\gamma^4-8019\gamma^2+2916=0$.}
\label{tab:compareDyckMotzkin}
\end{table*}
\renewcommand{\arraystretch}{1.0}

	\subsection{Motivation and related work}
 \label{sec:motivation}

Our nondeterministic model of lattice paths has its roots in networking.
Let us first give a vivid description of the underlying mechanisms using Russian dolls.  

\paragraph{Russian dolls.}
Suppose we have a set of $n+1$ people arranged in a line. There are three kinds of people. A person of the first kind is only able to put a received doll in a bigger one. A person of the second kind is only able to extract a smaller doll (if any) from a bigger one. If she receives the smallest doll, then she throws it away. Finally, a person of the third kind can either put a doll in a bigger one or extract a smaller doll if any. 
We want to know if it is possible for the last person to receive the smallest doll after it has been given to the first person and then, consecutively, handed from person to person while performing their respective operations. 
This is equivalent to asking if a given N-walk with each N-step $\in \left\{\{1\},\{-1\},\{-1,1\}\right\}$ is an N-excursion, \ie if the N-walk is compatible with at least one excursion. The probabilistic version of this question is: what is the probability that the last person can receive the smallest doll according to some distribution on the set of people over the three kinds?

	\paragraph{Networks and encapsulations.}

The original motivation for this work comes from networking. In a network, some nodes are able to encapsulate protocols (put a packet of a protocol inside a packet of another one), decapsulate protocols (extract a nested packet from another one), or perform any of these two operations (albeit most nodes are only able to transmit packets as they receive them). Typically, a tunnel is a subpath starting with an encapsulation and ending with the corresponding decapsulation. Tunnels are very useful for achieving several goals in networking (\eg interoperability: connecting IPv6 networks across IPv4 ones~\cite{wu2013transition}; security and privacy: securing IP connections~\cite{seo2005security}, establishing Virtual Private Networks~\cite{rosen2015multicast}, etc.). Moreover, tunnels can be nested to achieve several goals. Replacing the Russian dolls by packets, we see that an encapsulation can be modeled by a $\{1\}$-step and a decapsulation by a $\{-1\}$-step, while a passive transmission of a packet is modeled by a $\{0\}$-step.

Given a network with some nodes that are able to encapsulate or decapsulate protocols, a path from a sender to a receiver is \textit{feasible} if it allows the latter to retrieve a packet exactly as dispatched by the sender. 
Computing the shortest feasible path between two nodes is polynomial-time~\cite{LFCP16} if cycles are allowed without restriction. In contrast, the problem is  $\mathsf{NP}$-hard if cycles are forbidden or arbitrarily limited. In~\cite{LFCP16}, the algorithms are compared through worst-case complexity analysis and simulation. The simulation methodology for a fixed network topology is to make encapsulation (\resp decapsulation) capabilities available with some probability $p$ and observe the processing time of the different algorithms. It would be interesting, for simulation purposes, to generate random networks with a given probability of existence of a feasible path between two nodes. This work is the first step towards achieving this goal, since our results give the probability that a given path is feasible (\ie is an N-excursion) according to a probability distribution of encapsulation and decapsulation capabilities over the nodes.

	\paragraph{Lattice paths.}

Nondeterministic walks naturally connect lattice paths and branching processes. 
This is underlined by our usage of many well-established analytic and algebraic tools previously used to study lattice paths.
In particular, we rely on the robustness of D-finite functions with respect to the Hadamard product, as well as on the kernel method~\cite{FS09,BM10,BaFl02,BaWa19,BKKK}.

The N-walks are nondeterministic one-dimensional discrete walks.
We will see that their generating functions require three variables:
one marking the lowest point $\min(w)$ that can be reached by the N-walk $w$,
another one marking the highest point $\max(w)$,
and the last one marking its length.
For this reason they are also closely related to two-dimensional lattice paths,
when interpreting $\left(\min(w), \max(w)\right)$ as coordinates in the plane.

\paragraph{Nondeterminism and context-free grammars.}

Nondeterministic walks are very closely linked to nondeterministic finite automata with a stack, and hence to context free grammars.
In particular, 
    for any finite set of N-steps the class of N-excursions, N-meanders, and N-bridges can be encoded by a context-free grammar.
%
    To see this, note that
    languages generated by context-free grammars are equivalently recognized by pushdown automata with a single stack. 
    These automata are by definition nondeterministic, hence they allow $\varepsilon$-transitions which we use to follow all trajectories in the N-walk simultaneously. 
    Then the stack is used to keep track of the distance of the current trajectory to the $x$-axis, thereby distinguishing between different types of N-walks. 

If a context-free language admits an unambiguous grammar, the generating function of the number of its words of length $n$ is algebraic. 
These words of length $n$ are exactly the N-walks of length $n$, and hence, this gives a strong hint that the N-walks we analyze in this paper are algebraic.
However, it is not obvious how to find a unambiguous grammar directly, and even if it is found, how to derive the characterizing algebraic equations.
Moreover, the associated system of equations is already for small examples nearly impossible to solve (resultants of large degree).
For this reason, we show in this article an alternative approach for the enumeration and asymptotics building on the \emph{kernel method}.
It has the advantage that it captures the algebraic nature of the generating functions and allows to compute the asymptotics via singularity analysis.

We leave as an interesting open problem
to determine which N-step sets $S$
correspond to non-ambiguous context-free languages
for N-bridges or N-excursions.

\subsection{Outline of the paper}

We start in Section~\ref{sec:introAC} with a short introduction into the techniques from Analytic Combinatorics that we will use repeatedly.
After that we commence with the study of $N$-walks. 
We study Dyck N-walks in Section~\ref{sec:DyckNWalks}.
We derive closed forms for the generating functions and two limit laws for reachable points in N-excursions: maximal value at the end and the frequency of $\{0\}$.
We continue in Section~\ref{sec:MotzkinNWalks} with the analysis of Motzkin N-walks and obtain functional equations and closed forms for special cases.
All these computations are made explicit in an accompanying Maple worksheet.
In Section~\ref{sec:general_NBridges} we show that the generating function of N-bridges (with respect to length, minimum and maximum reachable point) is algebraic for any finite N-step set using concepts from additive combinatorics.
In Section~\ref{sec:general_NExcursions} we show that the generating function of N-meanders (with respect to length and maximum reachable point) is also algebraic for any finite N-step set and we conjecture that the same is true for N-excursions.
We also present Python code that we implemented to derive the functional equations for any N-step set for N-excursion and N-bridges.

This paper is the full version of an extended abstract\anonymous{~\cite{DePanafieuLamaliWallner2019Nondet} of 12 pages that was published
in the proceedings of the \emph{Sixteenth Workshop on Analytic Algorithmics and Combinatorics (ANALCO)} in 2019 in San Diego.}{ that will be cited in the final version.}
Here we provide full proofs that were previously only sketched. 
We greatly extend the proof of the algebraicity of general N-bridges in Section~\ref{sec:proof_general_bridges}, in particular by providing an extended definition of types and proving the finiteness of the automaton.
We additionally give limit law results for Dyck N-walks in Section~\ref{sec:limit_laws_NExcursions}. We also study subclasses of general N-excursions in Section~\ref{sec:general_NExcursions} and present our Python and Maple code~\cite{gitlabproject}.

\section{Techniques from Analytic Combinatorics}
\label{sec:introAC}

We present a short introduction to the field and follow the reference book~\cite{FS09}. 
The reader familiar with the concepts may skip this section.

\subsection{The symbolic method}\label{sec:introsymbmethod}

\paragraph{Combinatorial family.}
A \emph{combinatorial family} is a set $\mA$ equipped with a function $| \cdot |$ from $\mA$ to $\integers_{\geq 0}$, called the \emph{size}.
For all $n$, the number $a_n$ of elements of size $n$ in $\mA$, \ie
\[
    a_n = \left| \left\{ a \in \mA \mid |a| = n \right\} \right|,
\]
is assumed to be finite.


\paragraph{Generating function.}
We associate to any combinatorial family $\mA$ a \emph{generating function} $A(z)$
\[
    A(z)
    =
    \sum_{a \in \mA} z^{|a|}
    =
    \sum_{n \geq 0}
    a_n z^n.
\]
It is a formal power series, in the sense that convergence is not assumed.
We write $[z^n] A(z)$ for the $n$-th coefficient of the series:
\[
    [z^n] A(z) = a_n.
\]
This is called \emph{coefficient extraction}. By convention, we write $A(0)$ for $a_0 = [z^0] A(z)$.

\paragraph{Symbolic method.}
The \emph{symbolic method} is a dictionary that translates operations on combinatorial families into analytic relations on their generating functions. We present the following combinatorial operations: disjoint union, Cartesian product, sequences of length $k$, and sequences of arbitrary length.

\begin{itemize}
    
\item 
\textbf{Disjoint union.}
Consider two disjoint combinatorial families $\mA$ and $\mB$ and their union $\mC = \mA \cup \mB$. It is equipped with the size function $|(a, b)| := |a| + |b|$, where $|a|$ (resp.~$|b|$) refers to the size defined on $\mA$ (resp.~$\mB$). Then
\[
    C(z)
    =
    \sum_{c \in \mA \cup \mB}
    z^{|c|}
    =
    A(z) + B(z).
\]
We see that the disjoint union, an operation on combinatorial families, translates into a sum of generating functions.

\item 
\textbf{Cartesian product.}
Now consider the Cartesian product $\mC = \mA \times \mB$ and define the size of a pair as the sum of the sizes of its two elements $|(a,b)| = |a| + |b|$. Then
\[
    C(z)
    =
    \sum_{(a, b) \in \mA \times \mB}
    z^{|(a, b)|}
    =
    \sum_{\substack{a \in \mA \\ b \in \mB}}
    z^{|a|} z^{|b|}
    =
    A(z) B(z).
\]
The Cartesian product translates into the product of generating functions.

\item 
\textbf{Sequence of length $k$.}
It follows by induction that the family $\mC = \mA^k$ of sequences of $k$ elements from $\mA$ has the generating function $C(z) = A(z)^k$.

\item 
\textbf{Sequence.}
We denote the sequences of arbitrary length of objects from $\mA$ by
\[
    \operatorname{Seq}(\mA) = \bigcup_{k \geq 0} \mA^k.
\]
When $\mA$ contains some element $\varepsilon$ of size $0$, any sequence of copies of $\varepsilon$ has size $0$, so there are infinitely many objects of size $0$ in $\operatorname{Seq}(\mA)$, which is then not a combinatorial family. So let us assume $\mA$ contains no object of size $0$, \ie $A(0) = 0$. Then the generating function of $\operatorname{Seq}(\mA)$ is $\sum_{k \geq 0} A(z)^k$, which is easily shown to be the multiplicative inverse of $1 - A(z)$, so we write it $\frac{1}{1 - A(z)}$.

\end{itemize}

\paragraph{Example: classical walks.}
Consider the combinatorial family composed of classical walks on the step set $\{-1, 1\}$. The size of a walk is defined as its length.
The generating function of the family containing only the empty walk is $1 \cdot z^{0} = 1$ (one object of size $0$, no other object).
The generating function of steps (\ie walks of length $1$) is $2 z$.
Since a walk is a sequence of steps, the generating function of all walks is $\frac{1}{1 - 2 z}$ and we recover by coefficient extraction the elementary result that the number of walks of length $n$ is $[z^n] \frac{1}{1 - 2 z} = 2^n$.

An excursion with step set $\{-1,1\}$ is called a \emph{Dyck path}. Any nonempty Dyck path starts with a step $1$ and contains a first step $-1$ that returns to the x-axis. The portion between these two steps and the remaining suffix are themselves Dyck paths (possibly empty). This unique decomposition translates, via the symbolic method, into the functional equation for the generating function $D(z)$ of Dyck path
\[
    D(z)
    =
    1
    + z^2 D(z)^2.
\]
This quadratic equation has solutions
\[
    \frac{1 \pm \sqrt{1 - 4 z^2}}{2 z^2}.
\]
Since there is exactly one Dyck path of length $0$ (the empty walk), we have $D(0) = 1$, so the associated generating function is
\[
    D(z)
    =
    \frac{1 - \sqrt{1 - 4 z^2}}{2 z^2}.
\]
This is an even function, because there are no Dyck paths of odd length.
By coefficient extraction using, \eg Newton's generalized binomial theorem, we recover the classic enumeration of Dyck paths of even length by Catalan numbers
\[
    [z^{2n}] D(z)
    =
    \frac{1}{n + 1} \binom{2 n}{n}.
\]

\subsection{Singularity analysis}\label{sec:introsingana}

All asymptotics are extracted using a combination of the following two foundational results. More comprehensive details and proofs are available in~\cite{FS09}; see in particular Theorems~VI.1 and VI.3 therein.

\begin{theorem}[Standard scale]
For any $\alpha \in \mathbb{R} \setminus \mathbb{Z}_{\leq 0}$, we have
\[
    [z^n] (1 - z / \rho)^{-\alpha}
    =
    \rho^{-n}
    \frac{\alpha (\alpha + 1) \cdots (\alpha + n - 1)}{n!}
    \sim
    \rho^{-n}
    \frac{n^{\alpha - 1}}{\Gamma(\alpha)}.
\]
\end{theorem}

Consider three values $0 < \phi < \pi / 2$, $\rho \in \mathbb{C}$ and $R > |\rho|$. The $\Delta(\phi, \rho, R)$-domain is defined by~\cite[Definition~VI.1]{FS09} as
\[
    \Delta(\phi, \rho, R)
=
    \{
    z \mid
    |z| < R,\ 
    z \neq \rho,\ 
    |\arg(z - \rho)| > \phi
    \}.
\]
Given a function $F(z)$ and a complex value $\rho$, we say that $F(z)$ is analytic in a $\Delta$-domain if there exist $0 < \phi < \pi / 2$ and $R > |\rho|$ such that $F(z)$ is analytic in $\Delta(\phi, \rho, R)$.
To prove that a series $F(z)$ of radius of convergence $\rho > 0$ is analytic in a $\Delta$-domain, it is sufficient to prove that $F(z)$ is analytic at every point of modulus $\rho$, except at $\rho$, and that there is a neighborhood $\Omega$ of $\rho$ such that $F(z)$ is analytic on $\Omega \setminus [\rho, +\infty)$.
This condition suffices for all applications except the proof of Theorem~\ref{th:LawDyckExcMax}, where the full definition is needed.

\begin{theorem}[Transfer]
Consider a series $F(z)$ with nonnegative coefficients and a positive radius of convergence $\rho$, analytic in a $\Delta$-domain and such that as $z \to \rho$ in this domain,
\[
    F(z)
    =
    \bigO \left( (1 - z / \rho)^{-\alpha} \right)
\]
for some $\alpha \in \mathbb{R} \setminus \mathbb{Z}_{\leq 0}$. Then
\[
    [z^n] F(z)
    =
    \bigO \left( \rho^{-n} n^{\alpha - 1} \right).
\]
\end{theorem}

As an illustrative example, consider the series
\[
    F(t) = \frac{1 - 6 t^2}{\sqrt{1 - 8 t^2} (1 - 9 t^2)}
\]
arising from Theorem~\ref{theo:DyckNBridges}. 
First, observe that $F(t)$ is an even function, meaning every coefficient of an odd power of $t$ is strictly zero. 
Therefore, we can rewrite it as $F(t) = G(t^2)$, where 
$$G(z)=\frac{1 - 6 z}{\sqrt{1 - 8 z} (1 - 9 z)}.$$ 
Consequently, it holds that $[t^{2n}]F(t) = [z^n] G(z)$.

Next, we locate the dominant singularity (the one closest to the origin). 
Since $G(z)$ has non-negative coefficients, Pringsheim's Theorem~\cite[Theorem~IV.6]{FS09} guarantees that a singularity lies on the positive real axis. 
The radius of convergence of $G(z)$ is $\rho=1/9$, which constitutes a simple pole and acts as the sole dominant singularity. 
Computing its singular expansion at $z=1/9$ yields the polar contribution:
\begin{align*}
    G(z) = \frac{1}{1-9z} - 2 + \bigO(1-9z).
\end{align*}
To extract further asymptotic terms, we subtract this polar part from $G(z)$. The remainder $G(z) - \frac{1}{1-9z}$ is now analytic for $|z| < 1/8$. 
From the closed form, we directly observe that this remainder becomes singular at the branch point $z=1/8$. 
Expanding it at this new dominant singularity gives:
\begin{align*}
    G(z) - \frac{1}{1-9z} = 8 - \frac{2}{\sqrt{1-8z}} + \bigO(\sqrt{1-8z}). 
\end{align*}
Finally, by applying the standard function scale and transfer theorems to both the polar part and the algebraic remainder, we obtain the expansion:
\begin{align*}
    [z^n] G(z) = 9^n - \frac{2}{\sqrt{\pi}} \frac{8^n}{\sqrt{n}} + \bigO\left(\frac{8^n}{n^{3/2}}\right).
\end{align*}

\subsection{The kernel method}\label{sec:introkernel}

A powerful algebraic technique to analyze lattice paths, and combinatorial objects obeying a linear recurrence in general, is the so-called \emph{kernel method}.
As a primary source, Exercise 2.2.1--4 in Knuth's book~\cite{Kn69} is frequently cited; see also~\cite{BaWa19} and the references therein for modern applications.

Let us consider Dyck meanders: paths that never cross below the $x$-axis, utilizing the step set $\{-1,1\}$.
Let $M(t,u) = \sum_{n,k \geq 0} m_{n,k} t^n u^k$ be their bivariate generating function, where $m_{n,k}$ denotes the number of meanders composed of $n$ steps and ending at altitude $k$. 

Symbolically, a meander is either the empty walk (with generating function $1$), or it is constructed by appending a new step to a shorter meander. However, a structural restriction applies: if the meander currently ends on the $x$-axis (altitude $0$), we cannot append a down-step ($-1$).
Translating this construction via the symbolic method yields the functional equation:
\begin{align*}
    M(t,u) &= 1 + t \left(u + u^{-1}\right)M(t,u) - t u^{-1} M(t,0).
\end{align*}
Note that by definition of $M(t,u)$, the generating function of meanders ending on the $x$-axis is $M(t,0) = D(t)$, which represents the generating function of standard Dyck paths.

At first glance, this functional equation appears under-determined, presenting only one equation for two unknown functions, $M(t,u)$ and $M(t,0)$. 
Here, the kernel method exploits the specific algebraic structure of the equation.
First, we group the terms involving $M(t,u)$ and multiply by $u$ to clear the denominator:
\begin{align*}
    u\left(1-t\left(u + u^{-1}\right)\right)M(t,u) &= u - t M(t,0).
\end{align*}
Observe that on the left-hand side, the prefactor $K(t,u) := u\left(1-t\left(u + u^{-1}\right)\right)$ is fully explicit. This polynomial is known as \emph{the kernel}. 
The underlying strategy parallels the method of characteristics for solving partial differential equations: we bind the variables $u$ and $t$ in such a way that the kernel $K(t,u)$ intentionally vanishes. This algebraic binding isolates $M(t,0)$ on the right-hand side. 
In our specific case, the kernel equation $K(t,u)=0$ simplifies to the quadratic:
\begin{align*}
    t u^2 - u + t = 0.
\end{align*}
This yields two fractional power series solutions in $t$:
\begin{align*}
    u_1(t) &= \frac{1 - \sqrt{1-4t^2}}{2t} 
    & \text{ and } &&
    u_2(t) &= \frac{1 + \sqrt{1-4t^2}}{2t}.
\end{align*}
Analytically, for $t \to 0$, we have $u_1(t) = \bigO(t)$ while $u_2(t) = \bigO(1/t)$. 
We intend to substitute a root $u=u_i(t)$ into the functional equation, and consequently into $M(t,u)$. 
However, for $M(t,u_i(t))$ to remain a well-defined formal power series in $t$, the only legitimate choice is the small branch $u=u_1(t)$.
Substituting the large branch $u=u_2(t)$ would introduce arbitrary negative powers of $t$, and the resulting algebraic structure would fail to be an integral domain.

Therefore, safely substituting $u=u_1(t)$ into our rearranged equation annihilates the left-hand side:
\begin{align*}
    0 &= u_1(t) - t M(t,0).
\end{align*}
From this, we recover the classic generating function for Dyck paths: $M(t,0) = \frac{u_1(t)}{t} = \frac{1 - \sqrt{1-4t^2}}{2t^2}$.
Finally, by substituting this resolved expression for $M(t,0)$ back into the original functional equation, we obtain the explicit closed form for the generating function of all Dyck meanders:
\begin{align*}
    M(t,u) &= \frac{1 - u_1(t)/u}{1-t(u+u^{-1})}.
\end{align*}

\subsection{Algebraic generating functions and formal languages}\label{sec:introalgebraic}

The nature of generating functions is traditionally classified into three main types: rational, algebraic, and D-finite. A generating function $F(t)$ is defined as:
\begin{enumerate}
    \item \emph{Rational} if it is of the shape $F(t) = \frac{P(t)}{Q(t)}$ for two polynomials $P(t), Q(t) \in \mathbb{Q}[t]$;
    \item \emph{Algebraic} if it is the root of a non-zero polynomial $P(u,t) \in \mathbb{Q}[u,t]$, such that $P(F(t),t) = 0$;
    \item \emph{D-finite} (or \emph{holonomic}) if it satisfies a linear differential equation with polynomial coefficients:
    \[
        p_d(t) F^{(d)}(t) + \dots + p_{1}(t) F'(t) + p_0(t) F(t) = 0,
    \]
    with $p_i(t) \in \mathbb{Q}[t]$ for $i=0,1,\dots, d$.
\end{enumerate}
Note that this forms a strict hierarchy: every rational function is algebraic, and every algebraic function is D-finite (see~\cite[Section~6]{S01} for details). These classes satisfy various closure properties; for instance, the sum and product of two algebraic functions remain algebraic (see~\cite{KauersPaule2011Tetrahedron} for applications to computer algebra).
In the accompanying Maple worksheets~\cite{gitlabproject} we make heavy use of these properties implemented in the \emph{gfun} Maple package~\cite{SalvyZimmermann1994gfun}.

Furthermore, algebraic functions possess a remarkable characterization via \emph{diagonals} of multivariate series. If $R(x,y) = \sum_{n,m \geq 0} a_{n,m} x^n y^m$ is a rational function in two variables, its diagonal $\Delta R(t) = \sum_{n \geq 0} a_{n,n} t^n$ is necessarily algebraic. Conversely, by a theorem of Furstenberg~\cite{Furstenberg1963Algebraic}, every algebraic function can be expressed as the diagonal of a bivariate rational function. Note that diagonals of rational functions in more than two variables are D-finite, but not necessarily algebraic~\cite{Lipshitz1988Diagonal}.

These analytic classes mirror the hierarchy of formal languages in a fundamental way~\cite{HopcroftMotwaniUllman2006}. It is well-known that the counting sequence of a \emph{regular language}, which is the class of languages recognized by \emph{finite automata}, always yields a rational generating function. Moving up the hierarchy to context-free languages, we find a deep connection to algebraic functions, formalized by the following seminal result:

\begin{theorem}[{Chomsky--Schützenberger~\cite{ChomskySchuetzenberger1963CF}}]
    The formal power series associated with an unambiguous context-free grammar is algebraic. Consequently, the ordinary generating function enumerating the number of words of length $n$ in an unambiguous context-free language is an algebraic function.
\end{theorem}

While regular languages are recognized by finite automata (deterministic or non-deterministic), context-free languages require a more complex machine model: the \emph{pushdown automaton}. Specifically, the class of deterministic context-free languages is recognized by the \emph{deterministic pushdown automaton}, a distinction that is crucial in parsing theory, though both typically yield algebraic generating functions under non-ambiguity constraints~\cite{HopcroftMotwaniUllman2006}.

		\section{Dyck N-walks}
        \label{sec:DyckNWalks}

One of the most fundamental families of classical lattice paths is that of Dyck paths, introduced in Section~\ref{sec:introsymbmethod}, which are excursions associated with the step set $\{-1,1\}$.
They are enumerated by the ubiquitous Catalan numbers, which also enumerate many other combinatorial objects. 
In this section we study their corresponding N-walks.
A \emph{Dyck N-walk} is an N-walk associated with the N-step set
\[
    S_D = \Big\{ \{-1\}, \{1\}, \{-1, 1\} \Big\}.
\]
Additionally, every step is associated with a weight
$p_{-1}, p_{1}$, and $p_{-1,1}$, respectively.
In the following we will talk of Dyck N-meanders, Dyck N-bridges, and Dyck N-excursions if the walks use the above N-step $S_D$ and are N-meanders, N-bridges, or N-excursions, respectively.

\subsection{Dyck N-walks and N-bridges}

In order to enumerate any class of Dyck N-walks we need to study their reachable points. 

\begin{lemma} \label{lem:dyck_reachable_points}
The reachable points of a Dyck N-walk $w$ are
$$
    \left\{\min(w) + 2 i \mid i  \in \left\{ 0, \dots, \frac{\max(w) - \min(w)}{2} \right\} \right\}.
$$
The same result holds for Dyck N-meanders,
with $\min(w)$ and $\max(w)$ replaced by $\min^+(w)$ and $\max^+(w)$; see Definition~\ref{def:reachable_points}.
\label{lemma:minmax}
\end{lemma}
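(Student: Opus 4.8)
The plan is to prove the statement by induction on the length $n$ of the N-walk $w$. The claim has two parts that must be maintained together: (i) the reachable points of $w$ all have the same parity, namely that of $\min(w)$, and form an unbroken arithmetic progression with common difference $2$ from $\min(w)$ to $\max(w)$; and (ii) the analogous statement for N-meanders with $\min^+, \max^+$. I would actually prove a slightly stronger bookkeeping statement: that the set of reachable points of $w$ after $j$ steps equals $\{\min_j + 2i : 0 \le i \le (\max_j - \min_j)/2\}$ for every prefix length $j$, so that the induction hypothesis is available at each stage.

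\textbf{The induction step for N-walks.} Write $w = (w_1,\dots,w_{n+1})$ and let $w' = (w_1,\dots,w_n)$ be the prefix. By the induction hypothesis, the reachable points of $w'$ are $R' = \{m' + 2i : 0 \le i \le (M'-m')/2\}$ where $m' = \min(w')$, $M' = \max(w')$. The reachable points of $w$ are $R = \{r + s : r \in R',\ s \in w_{n+1}\}$ (Minkowski sum with the final N-step). Here is the key observation: each step $s \in S_D = \{\{-1\},\{1\},\{-1,1\}\}$ is a subset of $\{-1,1\}$, hence contained in a single residue class modulo $2$ — every element of every N-step is odd. Therefore adding any N-step to $R'$ preserves the property of being a contiguous even-spaced progression: if $s = \{-1\}$ we get $R' - 1$, if $s = \{1\}$ we get $R' + 1$, and if $s = \{-1,1\}$ we get $(R'-1) \cup (R'+1)$, which since $R'$ has spacing $2$ and length at least one, fills in to the contiguous progression $\{m'-1, m'+1, \dots, M'+1\}$ with spacing $2$. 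In all three cases $R$ is $\{\min(w) + 2i : 0 \le i \le (\max(w)-\min(w))/2\}$, and the base case $n=0$ is the singleton $\{0\}$.

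\textbf{The N-meander case and the main obstacle.} For N-meanders the reachable points are the endpoints of \emph{compatible meanders}, i.e.\ we must additionally require every intermediate ordinate to stay $\ge 0$. The inductive argument is the same Minkowski-sum step, but now applied only to those compatible walks that have never gone negative. The subtlety — and the point I expect to need the most care — is the truncation at the boundary: when we add $\{-1\}$ or $\{-1,1\}$ to the set of reachable heights of meander-prefixes, any resulting value that is negative must be discarded, and one has to check this discarding removes exactly a prefix of the progression (not an interior point), so contiguity is preserved; this holds precisely because the reachable set is an interval of the even (or odd) lattice, so chopping off everything below $0$ leaves an interval. One also needs that $\min^+(w)$ and $\max^+(w)$ are well-defined, i.e.\ that at least one compatible meander exists — but that is exactly the hypothesis that $w$ is an N-meander, carried through the induction since a meander prefix of a meander is a meander. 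Assembling these observations, the contiguous even-spaced structure is preserved at every step, which gives the stated description with $\min(w), \max(w)$ replaced by $\min^+(w), \max^+(w)$.
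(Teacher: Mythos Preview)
Your proof is correct and follows essentially the same approach as the paper: induction on the number of steps, checking that each of the three Dyck N-steps preserves the contiguous spacing-$2$ structure of the reachable set. Your treatment is in fact more thorough than the paper's, which gives only a two-line sketch and does not explicitly spell out the boundary-truncation argument for the N-meander case that you correctly identify and handle.
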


\begin{proof}
We perform an induction on the number of steps.
A walk starts at $0$.
After appending a new N-step the reachable points are either shifted down or up by one unit by $\{-1\}$ or $\{1\}$, respectively, or they are the sum of both shifts by $\{-1,1\}$. 
Hence, there is always a distance~$2$ between the reachable points.
\end{proof}

By Lemma~\ref{lem:dyck_reachable_points} the reachable points are fully determined by their minimum and maximum.
Therefore, we define the generating functions $D(x,y; t)$ and $D^+(x,y; t)$,
of Dyck N-walks and Dyck N-meanders, respectively, as
\begin{align*}
  D(x,y;t) &:= \sum_{\text{Dyck N-walk $w$}}
   \bigg( \prod_{s \in w} p_s \bigg)
  x^{\min(w)}
  y^{\max(w)}
  t^{|w|},
  \\
  D^+(x,y; t) &:= \sum_{\text{Dyck N-meander $w$}}
  \bigg( \prod_{s \in w} p_s \bigg)
  x^{\min^+(w)}
  y^{\max^+(w)}
  t^{|w|}.
\end{align*}
Note that by construction these are power series in $t$ with Laurent polynomials in $x$ and $y$, as each of the finitely many N-walks of length $n$ has a finite minimum and maximum reachable point.

\begin{remark}
The variables $x$ and $y$ are called \emph{catalytic variables}, as they facilitate the enumeration of such paths by length but are not part of the initial enumeration question. 
The reachable points used here are a generalization of the coordinate of the end point that is used for classical lattice paths; see, \eg \cite{BaFl02,BousquetMelouJehanne2006Catalytic}.
\end{remark}

Observe that Lemma~\ref{lemma:minmax} is coherent with the fact that
all N-bridges and N-excursions have even length.
The number of Dyck N-bridges and Dyck N-excursions are then, respectively, given by
\begin{align}
    \label{eq:bridgesandexcursions}
  [x^{\leq 0} y^{\geq 0} t^{2n}] D(x,y; t)
  \qquad \text{and} \qquad
  [t^{2n}] 
  D^+(0,1; t),
\end{align}
where 
the nonpositive part extraction operator $[x^{\leq 0}]$ is defined as
$[x^{\leq 0}] \sum_{k \in \Z} f_k x^k := \sum_{k \leq 0} f_k x^k$
(and analogously for $[y^{\geq 0}]$).
In the following two subsections we derive closed-form expressions for these quantities. 
The generating function of Dyck N-walks has a particularly simple shape.

\begin{proposition}
    The generating function of Dyck N-walks is given by
    \begin{align}
        \label{eq:DyckNWalkGF}
        D(x,y;t) = \frac{1}{1 - t \left( \frac{p_{-1}}{x y} + p_{1} x y + p_{-1,1} \frac{y}{x} \right)}.
    \end{align}    
\end{proposition}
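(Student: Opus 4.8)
The plan is to track how the pair $(\min(w),\max(w))$ of extreme reachable points of a Dyck N-walk evolves when a single N-step is appended, and to translate this into a transfer-matrix-style recursion on the generating function $D(x,y;t)$. By Lemma~\ref{lem:dyck_reachable_points}, the reachable set of a Dyck N-walk is determined by the pair $(\min(w),\max(w))$, so it suffices to understand these two quantities.

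First I would examine the effect of each of the three N-steps on $(\min,\max)$. Appending $\{-1\}$ shifts every reachable point down by one, hence sends $(\min,\max)$ to $(\min-1,\max-1)$; appending $\{1\}$ sends it to $(\min+1,\max+1)$; appending $\{-1,1\}$ replaces the reachable set by the union of the two shifted copies, hence sends $(\min,\max)$ to $(\min-1,\max+1)$. In terms of the monomial $x^{\min(w)}y^{\max(w)}$ recording a walk, appending $\{-1\}$ with weight $p_{-1}$ multiplies the monomial by $\frac{p_{-1}}{xy}$, appending $\{1\}$ with weight $p_1$ multiplies it by $p_1 xy$, and appending $\{-1,1\}$ with weight $p_{-1,1}$ multiplies it by $p_{-1,1}\frac{y}{x}$. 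Crucially, these three operations on $(\min,\max)$ are injective and have disjoint images in the sense needed: from the resulting pair one can uniquely recover which N-step was last appended together with the previous pair (the parity/relative geometry forces it), so no two distinct Dyck N-walks of the same length collapse to the same monomial via distinct last steps in a way that would break the bijection on walks — we are summing over all walks, so even overlaps of monomials are harmless, but the recursion below is a genuine identity of formal series.

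Next I would set up the recursion. Every nonempty Dyck N-walk is obtained uniquely from a (possibly empty) Dyck N-walk by appending exactly one of the three N-steps, and the empty walk contributes the monomial $1$ (it has $\min=\max=0$, length $0$, empty product of weights). Summing the weighted monomials $x^{\min(w)}y^{\max(w)}t^{|w|}$ over all Dyck N-walks $w$ and splitting according to whether $w$ is empty or according to its last N-step yields
\begin{equation*}
  D(x,y;t) = 1 + t\left( \frac{p_{-1}}{xy} + p_{1}xy + p_{-1,1}\frac{y}{x} \right) D(x,y;t),
\end{equation*}
since appending a step to each walk counted by $D(x,y;t)$ contributes one factor of $t$ and the corresponding monomial factor described above, uniformly over all walks. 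Solving this linear equation for $D(x,y;t)$ gives exactly the claimed formula~\eqref{eq:DyckNWalkGF}.

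The only subtle point — and the one I would be most careful about — is justifying that appending an N-step acts on the recorded monomial $x^{\min(w)}y^{\max(w)}$ simply by multiplication by the stated Laurent monomial, \emph{uniformly} and \emph{without exception}, for every Dyck N-walk including those whose reachable set is a single point (i.e. $\min(w)=\max(w)$). In that degenerate case appending $\{-1,1\}$ turns a one-point set $\{k\}$ into $\{k-1,k+1\}$, so indeed $(\min,\max)$ goes from $(k,k)$ to $(k-1,k+1)$, matching the factor $p_{-1,1}\frac{y}{x}$; and appending $\{-1\}$ or $\{1\}$ behaves as stated. So there is in fact no exceptional case, and the formal-series manipulation is valid because $D(x,y;t)$ is a well-defined power series in $t$ with Laurent-polynomial coefficients in $x,y$ (as noted after the definition of $D$), making the geometric-series resummation legitimate in $\mathbb{Q}[x^{\pm 1},y^{\pm 1}][[t]]$.
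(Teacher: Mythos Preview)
Your proof is correct and follows essentially the same approach as the paper: decompose each nonempty N-walk as a shorter N-walk with one appended N-step, observe that each of the three N-steps acts on the monomial $x^{\min}y^{\max}$ by multiplication by a fixed Laurent monomial, and solve the resulting linear recursion. Your additional remarks about injectivity of the last-step map and the degenerate case $\min(w)=\max(w)$ are harmless but unnecessary---since we are summing over sequences of N-steps (not over pairs $(\min,\max)$), the decomposition is automatically a bijection and no disjointness of images is required.
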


\begin{proof}
    The only N-walk of length $0$ is the empty walk.
    We obtain each non-empty N-walk, by appending an N-step to another N-walk. 
    In terms of generating functions this gives 
    $
        D(x,y;t) = 1 + t D(x,y;t) \left( \frac{p_{-1}}{x y} + p_{1} x y + p_{-1,1} \frac{y}{x} \right),
    $
    which proves the claim.
\end{proof}

The representation~\eqref{eq:DyckNWalkGF} has a direct bijective interpretation in terms of two-dimensional lattice paths.
The idea is to interpret the change in the minimum and maximum of the reachable points, as a change in the $x$- and $y$-direction.
We get the following mapping from N-steps to 2D-steps:
\begin{align}
    \label{eq:DyckNWalkBijection}
    \{-1\} &\mapsto (-1,-1), &
    \{1\} &\mapsto (1,1), &
    \{-1,1\} &\mapsto (-1,1).
\end{align}
To finalize the bijection, we map the origin to the origin. 
An example of this bijection is shown in Figure~\ref{fig:DyckNWalkMeander2D}.
This bijection will help us to study N-meanders and N-excursions in the next section.

\begin{figure}[ht]
	\centering
	\subfloat[A Dyck N-walk.]{%
	\centering
    \resizebox{0.55\textwidth}{!}{


\begin{tikzpicture}[shorten >=-3pt,shorten <=-3pt, x=1cm, y=1cm]
    \draw [dotted,gray,ystep=1] (-0.9,-6.9) grid (20.9,4.9) ;
    
    \draw [<->,thick] (-0.5,0) -- (20.5,0);
	\draw [<->,thick] (0,-6.5) -- (0,4.5);

    
    \def\mypath{1,1,0,-1,1,0,-1,-1,1,-1,-1,-1,-1,0,1,1,-1,1,0}
    \def\tmp{0}

    \def\mmm{0}
    \def\nrp{0}  

    \def\i{0}
    \foreach \s in \mypath {
    
        \ifthenelse{\s=0}{
            \foreach \j in {0,...,\nrp}{
                \pgfmathtruncatemacro{\tmp}{\mmm+2*\j}
                \draw [-*, very thick] (\i,\tmp) edge (\i+1,\tmp+1);
                \draw [-*, very thick] (\i,\tmp) edge (\i+1,\tmp-1);               
            }

            \pgfmathtruncatemacro{\tmp}{\mmm-1}
            \xdef\mmm{\tmp}
            \pgfmathtruncatemacro{\tmp}{\nrp+1}
            \xdef\nrp{\tmp}
        }{     
            \foreach \j in {0,...,\nrp}{
                \pgfmathtruncatemacro{\tmp}{\mmm+2*\j}
                \draw [-*, very thick] (\i,\tmp) edge (\i+1,\tmp+\s);           
            }

            \pgfmathtruncatemacro{\tmp}{\mmm+\s}
            \xdef\mmm{\tmp}
        }

        \xdef\i{\i+1}
        
    }

\end{tikzpicture}

    
   }
		\label{fig:DyckNWalkto2D}}
        \hfill
    \subfloat[The corresponding $2$D lattice path.]{%
    \includegraphics[width=0.38\textwidth]{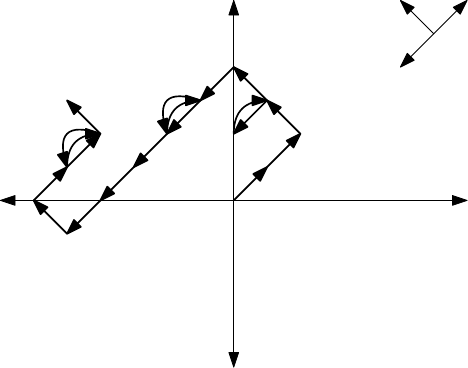}
		\label{fig:DyckNWalk2D}}
    \caption{The bijection of Dyck N-walks to two-dimensional lattice paths transforms the N-steps $\{-1\},\{1\},\{-1,1\}$ into the steps $(-1,-1),(1,1),(-1,1)$, respectively.}
   \label{fig:DyckNWalkMeander2D}
\end{figure}


We now turn our attention to Dyck N-bridges. Their generating function is defined as
\[
	B(x,y,t) = \sum_{n,k,\ell \geq 0} b_{n,k,\ell} x^{-k} y^{\ell} t^{n}.
\]
Recall from Lemma~\ref{lemma:minmax} that bridges have to be of even length, \ie $[t^{2n+1}]B(x,y,t) = 0$. 
Moreover, by~\eqref{eq:bridgesandexcursions} they are linked to the generating function of N-walks as follows
$
	[t^{2n}]B(x,y,t) = [x^{\leq 0} y^{\geq 0} t^{2n}] D(x,y; t).
$
In the following theorem we will reveal a great contrast to classical walks: nearly all N-walks are N-bridges.

\begin{theorem}
	\label{theo:DyckNBridges}
	The generating function of Dyck N-bridges $B(x,y,t)$ is algebraic of degree~$4$. 
    For $\wt{-1}=\wt{1}=\wt{-1,1}=1$ the generating function $B(1,1,t)$ is algebraic of degree $2$ (see \OEIS{A368164}\footnote{The On-Line Encyclopedia of Integer Sequences: \url{http://oeis.org}.}):
    \begin{align*}
    	B(1,1,t) &= \frac{1-6t^2}{\sqrt{1-8t^2}(1-9t^2)}
       = 1 + 7t^2 + 63t^4 + 583t^6 + 5407t^8 + \ldots.
    \end{align*}
    The number $[t^n] B(1,1,t)$ of unweighted Dyck N-bridges is asymptotically equal to
    \begin{align*}
    	\frac{1+(-1)^n}{2} \left( 3^n - \frac{2\sqrt{2}}{\sqrt{\pi}} \frac{8^{n/2}}{\sqrt{n}} + \bigO\left(\frac{8^{n/2}}{n^{3/2}}\right) \right).
    \end{align*}
\end{theorem}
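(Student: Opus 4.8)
The plan is to reduce the enumeration of Dyck N-bridges to a one-dimensional weighted–coloured walk, solve that by the kernel method, and then read off the asymptotics by singularity analysis.

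First I would exploit Lemma~\ref{lemma:minmax} and the parity remark already made (bridges have even length): a Dyck N-walk $w$ of length $2n$ is an N-bridge if and only if $\min(w)\le 0\le \max(w)$. Moreover, appending an N-step shifts the pair $(\min(w),\max(w))$ by $(-1,-1)$, $(1,1)$ or $(-1,1)$ for $\{-1\}$, $\{1\}$, $\{-1,1\}$ respectively; this is immediate from the functional equation for $D(x,y;t)$, and geometrically from the bijection \eqref{eq:DyckNWalkBijection}. Hence, if $w$ contains $a,b,c$ copies of $\{-1\},\{1\},\{-1,1\}$, then $\min(w)=b-a-c$ and $\max(w)=b+c-a$, so with $a+b+c=2n$ the N-bridge conditions become simply $a\le n$ and $b\le n$. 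Since $a\ge n+1$ and $b\ge n+1$ are incompatible, inclusion--exclusion together with the involution on N-walks exchanging the steps $\{-1\}$ and $\{1\}$ gives
\[
  \#\{\text{Dyck N-bridges of length }2n\}=2h_n-9^n,
\]
where $h_n$ counts the length-$2n$ sequences over $\{-1\},\{1\},\{-1,1\}$ with at most $n$ occurrences of $\{-1\}$; equivalently, $h_n$ counts length-$2n$ walks with one kind of down-step ($-1$) and two kinds of up-step ($+1$) ending at a nonnegative height. Therefore $B(1,1,t)=2\widehat H(t^2)-\frac{1}{1-9t^2}$ with $\widehat H(s):=\sum_{n\ge 0}h_n s^n$.

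Second, I would compute $\widehat H$ by the kernel method. With step polynomial $\phi(u)=u^{-1}+2u$ one has $\widehat H(s)=[u^{\ge 0}]\,\frac{1}{1-s\,\phi(u)^2}\big|_{u=1}$; clearing the denominator by $u^2$ produces a kernel quadratic in $u^2$, with a unique small root $v_-(s)=\bigO(s)$, and the usual partial-fraction split isolates the part of $\frac{1}{1-s\phi(u)^2}$ carrying nonnegative powers of $u$. Using the relations $v_+v_-=\tfrac14$ and $v_+-v_-=\frac{\sqrt{1-8s}}{4s}$ between the two roots and rationalising, this collapses to $\widehat H(s)=\frac{1-6s+\sqrt{1-8s}}{2(1-9s)\sqrt{1-8s}}$, and hence
\[
  B(1,1,t)=\frac{1-6t^2}{(1-9t^2)\sqrt{1-8t^2}},
\]
which is algebraic of degree $2$. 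The general assertion for $B(x,y,t)$ is obtained by running the same manipulation directly on $D(x,y;t)$ while retaining the catalytic variables: the $x$-kernel and the $y$-kernel are each quadratic and each introduces one square root, so $B(x,y,t)$ lies in a degree-$4$ extension of $\mathbb{Q}(x,y,t)$; treating the weights $\wt{-1},\wt{1},\wt{-1,1}$ as indeterminates, these two square roots are independent, which pins the degree at exactly $4$, while in the unweighted case they coincide and the degree drops to $2$.

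Third, the asymptotics of $[t^n]B(1,1,t)$ follow from singularity analysis. The function is even in $t$, so odd coefficients vanish, \ie the factor $\frac{1+(-1)^n}{2}$. Its singularity nearest the origin is the simple pole at $t=\pm\tfrac13$ (note $\tfrac19<\tfrac18$); a residue computation shows $B(1,1,t)-\frac{1}{1-9t^2}$ is analytic at $t=\pm\tfrac13$, so this pole contributes exactly $3^n$ to the even-index coefficients. The remainder $\frac{1-6t^2-\sqrt{1-8t^2}}{(1-9t^2)\sqrt{1-8t^2}}$ is analytic for $|t|<\frac{1}{2\sqrt2}$ and has a square-root singularity at $t=\pm\frac{1}{2\sqrt2}$, where it behaves like $-2(1-8t^2)^{-1/2}$ up to less singular terms; the Flajolet--Odlyzko transfer theorem then yields the contribution $-\frac{2\sqrt2}{\sqrt\pi}\,\frac{8^{n/2}}{\sqrt n}$, the next term of the expansion giving the $\bigO\!\left(\frac{8^{n/2}}{n^{3/2}}\right)$ error, which is the claimed formula.

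The main obstacle is the kernel-method step: correctly identifying the small root and the piece of $D(x,y;t)$ (\resp of the auxiliary walk) supported on the relevant powers of the catalytic variable, and then massaging the resulting algebraic expression into the clean closed form above --- and, for the degree-$4$ statement, carrying both variables $x$ and $y$ through this computation and certifying that the two square roots do not degenerate for generic weights. The combinatorial reduction of the first step and the singularity analysis of the third step are routine by comparison.
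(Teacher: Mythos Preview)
Your argument is correct, and for the unweighted closed form it is genuinely different from (and more elementary than) the route taken in the paper. The paper works directly with the two-dimensional generating function $D(x,y;t)$ and the identity
\[
  [x^{\le 0}y^{\ge 0}]D(x,y;t)=D(x,y;t)-[x^{>0}]D(x,y;t)-[y^{<0}]D(x,y;t),
\]
then appeals to the general fact that the positive (resp.\ negative) part in one variable of a rational series is algebraic, and leaves the explicit computation of $B(1,1,t)$ to a Maple worksheet. You instead use the explicit description $\min(w)=b-a-c$, $\max(w)=b+c-a$ to rewrite the N-bridge condition as $a\le n$ and $b\le n$, apply inclusion--exclusion plus the $\{-1\}\leftrightarrow\{1\}$ symmetry to reduce to the single one-dimensional count $h_n$, and solve that by the standard kernel method for the biased walk $\phi(u)=u^{-1}+2u$. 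This yields the closed form for $B(1,1,t)$ by hand, without any two-variable extraction. The asymptotic step is the same in spirit (pole at $t^2=1/9$ plus square-root singularity at $t^2=1/8$), and your residue and transfer computations are correct.

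For the degree-$4$ statement about $B(x,y,t)$ with general weights you essentially fall back on the paper's strategy: extract the $x^{>0}$ and $y^{<0}$ parts of $D(x,y;t)$ via the two quadratic kernels in $x$ and in $y$. Your claim that the two square roots are \emph{independent} for generic weights (hence degree exactly $4$) is plausible but not actually justified in the proposal; the paper does not justify it either, relying on computer algebra. Likewise, your remark that ``in the unweighted case they coincide'' refers to the specialization $x=y=1$ (where indeed both discriminants equal $1-8t^2$), and it would be worth saying so explicitly. These are presentation issues rather than gaps in the mathematics.
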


\begin{proof}
In order to improve readability we drop the parity condition on $t$ and define
\[
  \Bb(x,y,t) := [x^{\leq 0} y^{\geq 0}] D(x,y; t).
\]
Then, $[t^{2n}]\Bb(x,y,t) = [t^{2n}]B(x,y,t)$, while in general $[t^{2n+1}]\Bb(x,y,t) \neq 0$ and is therefore not equal to $[t^{2n+1}]B(x,y,t)$.
The key observation is the following link between N-bridges and N-walks:
An N-bridge is an N-walk 
whose minimum is neither strictly positive, nor is its maximum strictly negative.\footnote{We thank \anonymous{Mireille Bousquet-M\'elou}{a momentarily anonymous (for the submission) professor} for suggesting us this approach.} 
This gives
\begin{align} \label{eq:NDyckBridgesInterpretation}
	\begin{aligned}
	\Bb(x,y,t) &= D(x,y; t) - [x^{>0}] D(x,y,t)  - [y^{<0}] D(x,y,t).
    \end{aligned}
\end{align}
Note that in general we would need to correct the right-hand side by $[x^{>0} y^{<0}]D(x,y,t)$, which is however equal to zero by construction, as the minimum can never be larger than the maximum.
Recall that $D(x,y,t)$ has a rational closed form given in~\eqref{eq:DyckNWalkGF} that admits a bijective interpretation in terms of two-dimensional lattice paths. 
Then, Equation~\eqref{eq:NDyckBridgesInterpretation} means that the 2D-walk has to end in the second quadrant. 
To simplify the remaining discussion we introduce the following shorthand:
\begin{align*}
    F(x,y,t) &:= [x^{>0}] D(x,y,t)
    &&
    \text{ and } 
    &
    G(x,y,t) &:= [y^{<0}] D(x,y,t).
\end{align*}

Then, $F(x,y,t)$ corresponds to 2D-walks ending in the right half-plane, and $G(x,y,t)$ to 2D-walks ending in the lower half-plane.
Now, the theory of 
formal Laurent series with positive coefficients (which applies to these problems) implies automatically that they are algebraic. 
Therefore, the generating function of bridges is algebraic; see, \eg~\cite[Section~6]{Gessel80} and Section~\ref{sec:introalgebraic}.

It remains to compute the generating functions explicitly and asymptotically evaluate their coefficients. 
First, observe that due to the symmetry of the step set we have $F(x,y,t)=G(1/y,1/x,t)$ after additionally interchanging the role of $\wt{-1}$ and $\wt{1}$. 
Thus, it suffices to compute $F(x,y,t)$. 
Following~\cite{BM10}, this can be achieved by computing the roots of the denominator of $D(x,y,t)$ and performing a partial fraction decomposition. 
Note that $D(x,y,t)$ is a formal power series in $t$ with Laurent polynomial coefficients in $x$ and $y$, \ie $D(x,y,t) \in \mathbb{Q}[x,1/x,y,1/y][[t]]$.
Finally, we compute the asymptotics using singularity; see Section~\ref{sec:introsingana}.
These computations are performed in the accompanying Maple worksheet~\cite{gitlabproject}.
\end{proof}


\subsection{Dyck N-meanders and N-excursions}

We continue by deriving a functional equation for Dyck N-meanders using a recursive step-by-step decomposition.

\begin{proposition} \label{th:Dyck_Nmeanders}
The generating function of Dyck N-meanders is characterized by the relation
\begin{align}
  D^+(x,y; t) =
  1 &+
  t \left( p_{-1} x^{-1} y^{-1} + p_1 x y + p_{-1,1} x^{-1} y \right) \left(D^+(x,y; t) - D^+(0,y; t)\right) \notag
  \\ &+
  t \left(p_{-1} x y^{-1} + (p_1 + p_{-1,1}) x y \right) \left(D^+(0,y; t) - D^+(0,0; t)\right) \label{eq:DyckNMeandersFuncEq}\\
  &+
  t \left( p_1 + p_{-1,1} \right) x y D^+(0,0; t). \notag
\end{align}
\end{proposition}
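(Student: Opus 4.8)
The plan is to derive the functional equation~\eqref{eq:DyckNMeandersFuncEq} by a first-step (or last-step) decomposition of Dyck N-meanders, carefully tracking how the minimum and maximum reachable points evolve. By Lemma~\ref{lemma:minmax}, the reachable points of a Dyck N-meander $w$ form the arithmetic progression from $\min^+(w)$ to $\max^+(w)$ with common difference $2$, so the pair $(\min^+(w), \max^+(w))$ is exactly the data marked by $x$ and $y$ in $D^+(x,y;t)$. The empty meander contributes the constant $1$. For a nonempty meander, I would write it as a shorter meander $w'$ followed by one appended N-step $s \in S_D$, and compute $(\min^+, \max^+)$ of the result in terms of those of $w'$.

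First I would set up the bookkeeping: if $w'$ has minimum $a := \min^+(w')$ and maximum $b := \max^+(w')$, then appending $\{1\}$ shifts every reachable point up by $1$, giving new min/max $(a+1, b+1)$ with no positivity constraint needed since $a \geq 0$; appending $\{-1\}$ shifts down, giving a \emph{tentative} range $(a-1, b-1)$, but meander-admissibility forces us to discard reachable points that become negative, so the new minimum is $\max(a-1, 0)$; appending $\{-1,1\}$ takes the union of the two shifts, which is the range from $a-1$ (clipped at $0$) to $b+1$. The delicate point is that whether a step is "legal" and what the resulting minimum is depends on whether $a = 0$ or $a \geq 2$ (equivalently $a \geq 1$, but by parity $a$ is even when, say, $b$ has fixed parity relative to length — actually one must be slightly careful, so I would phrase the case split purely in terms of $a = 0$ versus $a > 0$). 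This is exactly why the equation splits into the three groups of terms: the generic case $\min^+(w') > 0$ (first line after the $1$), the boundary case $\min^+(w') = 0$ but $\max^+(w') > 0$ (second line), and the degenerate case $\min^+(w') = \max^+(w') = 0$ (third line).

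Concretely, the term $t(p_{-1}x^{-1}y^{-1} + p_1 xy + p_{-1,1}x^{-1}y)(D^+ - D^+(0,y;t))$ accounts for appending any of the three N-steps to a meander $w'$ with $\min^+(w') \geq 2$: here $\{-1\}$ maps $(x,y) \mapsto (x^{-1}y^{-1}$-weighted shift$)$, i.e.\ multiplies the monomial $x^a y^b$ by $x^{-1}y^{-1}$, and similarly for the others, all without clipping; subtracting $D^+(0,y;t)$ removes the $a=0$ contributions. For $w'$ with $\min^+(w') = 0 < \max^+(w')$ (the series $D^+(0,y;t) - D^+(0,0;t)$), appending $\{-1\}$ clips the new minimum back to $0$ — so the $x$-exponent stays $0$, i.e.\ the map is $x^0 y^b \mapsto x^0 y^{b-1}$, contributing $p_{-1} x y^{-1}$ when we remember $x^0 = x \cdot x^{-1}$... — more precisely, starting from a monomial $y^b$ (with $b \geq 2$) the step $\{-1\}$ yields $y^{b-1}$, which relative to the "unclipped" weight $x^{-1}y^{-1}\cdot (xy^b)$ explains the factor $p_{-1}xy^{-1}$; meanwhile $\{1\}$ and $\{-1,1\}$ both give minimum $1$ and maximum $b+1$, hence the factor $(p_1 + p_{-1,1})xy$. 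Finally, for $w'$ with $\min^+(w') = \max^+(w') = 0$, only $\{1\}$ and $\{-1,1\}$ produce a meander (both giving the single reachable point $1$, i.e.\ monomial $xy$), while $\{-1\}$ produces a walk with only the negative point $-1$ reachable, hence no compatible meander — this yields the last line $t(p_1 + p_{-1,1})xy\, D^+(0,0;t)$.

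I expect the main obstacle to be getting the clipping bookkeeping exactly right, in particular making sure that the substitutions $D^+(0,y;t)$ and $D^+(0,0;t)$ correctly isolate the cases $a=0$ and $a=b=0$, and that the Laurent-monomial manipulations (especially reconciling "shift then clip" with the uniform multiplicative factors) produce precisely the coefficients shown, with no double-counting across the three cases and no missing case. Once the case analysis is pinned down, summing the generating-function contributions of each case is routine and yields~\eqref{eq:DyckNMeandersFuncEq}. It is worth double-checking the identity at low order (e.g.\ extracting $[t^1]$ and $[t^2]$ and comparing against a direct enumeration of short Dyck N-meanders) to confirm the constants.
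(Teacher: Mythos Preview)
Your approach is exactly the paper's: a last-step decomposition into the three cases $\min^+(w')>0$, $\min^+(w')=0<\max^+(w')$, and $\min^+(w')=\max^+(w')=0$, tracking how each N-step updates $(\min^+,\max^+)$.

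One point to clean up: in the boundary case $\min^+(w')=0<\max^+(w')=b$, appending $\{-1\}$ does \emph{not} clip the minimum back to~$0$. The reachable set $\{0,2,\ldots,b\}$ becomes $\{-1,1,\ldots,b-1\}$, and after discarding $-1$ the new minimum is~$1$ (the paper phrases this as ``the path ending at~$0$ disappears, and the one ending at~$2$ becomes the minimum''). So the monomial $y^b$ maps directly to $x\,y^{b-1}$, which is where the factor $p_{-1}\,x\,y^{-1}$ comes from---no need for the $x^0=x\cdot x^{-1}$ detour. With that correction your case analysis is complete and matches the paper's proof.
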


\begin{proof}
Applying the symbolic method (see Section~\ref{sec:introsymbmethod}),
we translate the following combinatorial characterization
of N-meanders into the claimed equation.
An N-meander is either of length $0$,
or it can be uniquely decomposed into an N-meander $w$
followed by an N-step.

If $\min^+(w)>0$ then any N-step can be appended.
The generating function of N-meanders
with positive minimum reachable point
is $D^+(x,y; t) - D^+(0,y; t)$.
If $\min^+(w)=0$ but $\max^+(w)>0$, which corresponds to the generating function $D^+(0,y; t) - D^+(0,0; t)$,
then an additional N-step $\{-1\}$ increases $\min^+(w)$
(the path ending at~$0$ disappears, and the one ending at~$2$ becomes the minimum) and decreases $\max^+(w)$,
while an additional N-step $\{1\}$ or $\{-1,1\}$ increases both $\min^+(w)$ and $\max^+(w)$.
Finally, if $\min^+(w)=\max^+(w)=0$,
which corresponds to the generating function $D^+(0,0; t)$,
then the N-step $\{-1\}$ is forbidden,
and the two other available N-steps both increase $\min^+(w)$ and $\max^+(w)$ by one.
\end{proof}

The representation~\eqref{eq:DyckNMeandersFuncEq} admits again an interpretation in terms of two-dimensional lattice paths.
For this purpose, we use the bijection~\eqref{eq:DyckNWalkBijection} and add spatial constraints:
The $x$-axis acts as an absorbing barrier, while the $y$-axis as a reflecting one; see Figure~\ref{fig:DyckReflectionAbsorption}.
This ensures that all paths stay in the first quadrant.
In particular, N-excursions are mapped to walks that end on the nonnegative $y$-axis.

\begin{figure}[ht]
	\centering
    \includegraphics[width=0.38\textwidth]{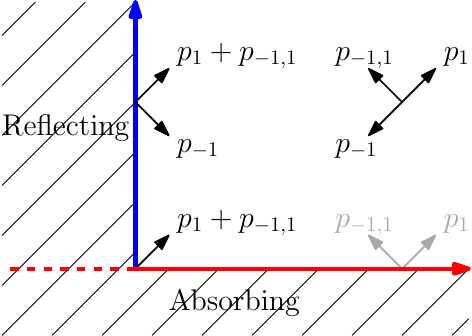}
    \caption{An N-Dyck meander corresponds to a two-dimensional walk with an absorbing $x$-axis and a reflecting (half) $y$-axis and jump polynomial $S(x,y)=p_{1} xy + p_{-1,1} \frac{y}{x} + \frac{p_{-1}}{xy}$.}
   \label{fig:DyckReflectionAbsorption}
\end{figure}

There is rich literature on such two-dimensional models and variations. 
Most variations make the problem much more complicated and often change the nature of the generating function. 
It is known that the generating function of the same steps as above with both barriers being absorbing is algebraic~\cite{BM10}, while there are other cases where it does not even satisfy a (linear) differential equation~\cite{MishnaRechnitzer09Nonholonomic,Dreyfus2017Walks}. 
Furthermore, one can add weights to the steps~\cite{CourtielEtAl17Weighted}, but also let the path accumulate  weights every time it touches one of the axes~\cite{BOR18,TOR14,XBO19}.
However, for the problem of a changing step sets depending on the current spatial position not much is known. Recently, the problem where the position depends on a deterministic automaton was treated for some special cases in~\cite{buchacher2018inhomogeneous}, yet this does not fit into our framework above. 
Only in a one-dimensional setting with an absorbing or reflecting barrier it was shown that the generating functions are always algebraic~\cite{bawa15c}. 
Since this step set is with respect to the spatial constraints only one-dimensional ($x$-positivity implies $y$-positivity), we expect a similar result. 
Yet, the result does not follow directly, as there are several boundary constraints and not just one here.

Let us introduce the \emph{min-max-change polynomial} $S(x,y)$
and the \emph{kernel} $K(x,y)$:
\begin{align} 
    \label{eq:minmaxchangepoly}
    \begin{aligned}
	S(x,y) &:= \frac{p_{-1}}{x y} + p_{1} x y + p_{-1,1} \frac{y}{x},
  \\
  K(x,y)  &:= xy(1- t S(x,y)). 
    \end{aligned}
\end{align}
%
The generating function of Dyck N-walks admits now the compact form
\begin{align}
    \label{eq:NWalksGeneric}
    D(x,y;t) = \frac{1}{1 - t S(x,y)}.
\end{align}

\begin{remark}[N-walks for general step sets]
    \label{rem:NWalksGeneric}
    Note that for any finite N-step set $S$ where each step $s \in S$ has weight $p_s$, we can define the min-max-change polynomial $S(x,y)$ analogously: 
    \begin{align*}
        S(x,y) &= \sum_{s \in S} p_s \, x^{\min(s)} y^{\max(s)}.
    \end{align*}
    Then, equation~\eqref{eq:NWalksGeneric} for the generating function of N-walks with this step set stays valid, as each N-walk is simply the (unconstrained) concatenation of N-steps.
\end{remark}

A key role in the following result on the closed form of Dyck N-meanders
is played by $Y(t)$ and $X(y,t)$, the unique power series solutions satisfying $K(1,Y(t)) = 0$, and $K(X(y,t),y)=0$ which have the following closed forms
\begin{align}
\label{eq:XYDyck}
\begin{aligned}
	Y(t) &= \frac{1-\sqrt{1-4p_{-1}(p_{1}+p_{-1,1})t^2}}{2(p_{1}+p_{-1,1})t},
  \\
  X(y,t) &= \frac{1-\sqrt{1-4p_{1}(p_{-1}+p_{-1,1}y^2)t^2}}{2p_{1} y t}.
\end{aligned}
\end{align}

\begin{theorem} \label{th:other_Dyck_Nmeanders}
	The generating function $D^+(x,y;t)$ of Dyck N-meanders is algebraic of  degree~$4$ (and degree $2$ if $p_i=0$ for some $i \in \{-1, 1, (-1, 1)\}$), and equal to
    \begin{align*}
    	 D^+(x,y;t) &= 
    	 \begin{cases}
    	    \frac{x-X(y,t)}{1-X(y,t)^2} \frac{y-xY(t)-X(y,t)Y(t)+xyX(y,t)}{xy(1-tS(x,y))}, &
    	    \text{ for } p_{1}>0,\\
    	    \frac{y^2-xyY(t)-t(p_{-1,1} y^2+p_{-1})(Y(t)-xy)}{y^2-t^2(p_{-1,1} y^2+p_{-1})^2}, &
    	    \text{ for } p_{1}=0.
    	  \end{cases}
    \end{align*}
    The generating functions $D^+(1,y,t)$ and $D^+(0,0,t)$ are algebraic of degree $2$.
    The generating function $D^+(0,1;t)$ of Dyck N-excursions is algebraic of degree $4$ (degree $2$ for $p_{1}=p_{-1})$.
\end{theorem}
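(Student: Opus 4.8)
The plan is to solve the functional equation~\eqref{eq:DyckNMeandersFuncEq} by the kernel method, separately for the two regimes $p_1>0$ and $p_1=0$, and then derive the remaining specialisations as corollaries.

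\textbf{Setting up the kernel equation.} First I would rewrite~\eqref{eq:DyckNMeandersFuncEq} in kernel form. Multiplying through by $xy$ and collecting the $D^+(x,y;t)$ terms on the left, the coefficient of $D^+(x,y;t)$ becomes exactly $K(x,y) = xy(1-tS(x,y))$ as defined in~\eqref{eq:minmaxchangepoly}, while the right-hand side is a Laurent polynomial in $x$ (of bounded degree) whose coefficients involve only the two one-variable unknowns $D^+(0,y;t)$ and $D^+(0,0;t)$. Explicitly the equation reads
\begin{align*}
  K(x,y)\, D^+(x,y;t) = xy - t\left(p_{-1} y^{-1} + p_1 xy\cdot xy\cdot x^{-1}\cdots\right)\!\big(\cdots\big),
\end{align*}
so I would just carefully expand: the ``bad'' terms $D^+(0,y;t)$ and $D^+(0,0;t)$ enter with explicit polynomial prefactors coming from the three lines of~\eqref{eq:DyckNMeandersFuncEq}. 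The key structural fact is that $K(x,y)$, viewed as a polynomial in $x$, has degree $2$ (from the $p_1 xy$ term together with the $xy$ prefactor), so it has two roots in $x$; the relevant small root is $X(y,t)$ given in~\eqref{eq:XYDyck}, the unique root that is a power series in $t$ vanishing at $t=0$. Substituting $x = X(y,t)$ kills the left-hand side and yields one linear relation between $D^+(0,y;t)$ and $D^+(0,0;t)$.

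\textbf{Extracting the two unknowns.} One kernel substitution gives one equation, but I have two unknowns, $D^+(0,y;t)$ and $D^+(0,0;t)$, so I need a second relation. This is where $Y(t)$ enters: the relation obtained from $x=X(y,t)$ is an identity in $y$, and specialising it further at $y = Y(t)$ — the root of $K(1,Y)=0$, equivalently a value making an additional factor vanish — produces the needed second equation, determining $D^+(0,0;t)$ alone, and then back-substitution gives $D^+(0,y;t)$. Plugging both back into the kernel-form equation and solving for $D^+(x,y;t)$ gives the closed form; for $p_1>0$ I expect precisely the stated
\[
  D^+(x,y;t) = \frac{x-X(y,t)}{1-X(y,t)^2}\cdot\frac{y - xY(t) - X(y,t)Y(t) + xyX(y,t)}{xy(1-tS(x,y))},
\]
which I would verify by checking it satisfies~\eqref{eq:DyckNMeandersFuncEq} and is a power series in $t$ (the apparent poles at $1-X^2=0$ and at zeros of $xy(1-tS)$ must cancel). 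When $p_1=0$ the kernel degenerates: $K(x,y)$ becomes linear in $x$, so there is only one root, no variable $X(y,t)$ is needed, the equation already expresses $D^+(x,y;t)$ in terms of $D^+(0,y;t)$ after a single kernel cancellation, and a short computation yields the rational-in-$(x,y)$, algebraic-of-degree-$2$ formula displayed in the second case.

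\textbf{Degree bookkeeping and the specialisations.} For the algebraicity claims: $X(y,t)$ and $Y(t)$ are each algebraic of degree $2$ over $\mathbb{Q}(y,t)$ resp.\ $\mathbb{Q}(t)$ (visibly, one square root each in~\eqref{eq:XYDyck}), and the closed form is a rational function of $x$, $y$, $t$, $X(y,t)$, $Y(t)$; since the two square roots are algebraically independent in general, $D^+(x,y;t)$ lies in a degree-$4$ extension, and degree $2$ when one $p_i=0$ forces one of the radicands to become a perfect square (e.g.\ $p_{-1,1}=0$ or $p_1=0$ simplifies $X$; $p_{-1}=0$ or $p_1+p_{-1,1}$ structure simplifies $Y$). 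Then $D^+(1,y;t)$ is obtained by setting $x=1$: the factor $x-X$ over $1-X^2$ collapses to $1/(1+X(y,t))$, eliminating the $Y$-radical entirely, hence degree $2$; similarly $D^+(0,0;t)$ was computed along the way as an explicit expression in $Y(t)$ (and $X(0,t)$), degree $2$. For N-excursions, $D^+(0,1;t)$: set $x=0,y=1$, giving $\frac{-X(1,t)}{1-X(1,t)^2}\cdot\frac{-X(1,t)Y(t)}{0\cdot(\cdots)}$ — here the $xy$ in the denominator vanishes, so I must instead take the limit $x\to 0$ carefully (the numerator also vanishes to first order in $x$), after which one is left with a rational function of $X(1,t)$ and $Y(t)$, generically degree $4$, but degree $2$ when $p_1=p_{-1}$ since then the two radicands coincide up to the substitution and the field collapses. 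The main obstacle I anticipate is the second part: confirming that the claimed closed form is genuinely a power series — i.e.\ that all the spurious denominators cancel — and correctly handling the $x\to 0$, $y\to 1$ limit for the excursion generating function, including verifying the precise degree drop exactly when $p_1=p_{-1}$; all of this is delegated to the accompanying Maple worksheet for the explicit resultant computations.
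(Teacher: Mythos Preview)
Your approach is correct and close to the paper's, but the order of kernel specialisations differs in an instructive way. The paper first observes that in the rewritten equation
\[
    K(x,y)\,D^+(x,y;t) = xy + t(x^2-1)(p_{-1,1}y^2+p_{-1})\,D^+(0,y;t) - t x^2 p_{-1}\,D^+(0,0;t),
\]
the coefficient of $D^+(0,y;t)$ carries an explicit factor $(x^2-1)$, so substituting $x=1$ kills that unknown immediately and leaves a one-variable kernel equation in~$y$; then $y=Y(t)$ gives $D^+(0,0;t)=Y(t)/(p_{-1}t)$ directly, and only afterwards does the paper substitute $x=X(y,t)$ to recover $D^+(0,y;t)$. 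Your route---first $x=X(y,t)$, then $y=Y(t)$---works too, but it relies on the identity $X(Y(t),t)=1$ (this is why the $D^+(0,Y(t);t)$ term drops out in your second step), which you invoke implicitly without stating it. The paper's order makes this transparent and avoids composing the two algebraic functions.

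One small slip in your degree bookkeeping: for $D^+(1,y;t)$ you write that setting $x=1$ ``eliminates the $Y$-radical'', but it is the other way around. At $x=1$ the numerator $y-xY-XY+xyX$ factors as $(y-Y)(1+X)$, which cancels against $1/(1+X)$, so it is the $X$-radical that disappears and one is left with $(y-Y(t))/K(1,y)$, algebraic of degree~$2$ via~$Y$. Likewise $D^+(0,0;t)=Y(t)/(p_{-1}t)$ involves only $Y$, not $X(0,t)$.
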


\begin{proof}
Using the kernel $K(x,y)$ from~\eqref{eq:minmaxchangepoly}, we first rewrite the functional equation~\eqref{eq:DyckNMeandersFuncEq} into
\begin{align*}
    K(x,y)D^+(x,y; t) = xy + t(x^2-1)(p_{-1,1} y^2 + p_{-1}) D^+(0,y; t) - tx^2 p_{-1} D^+(0,0; t).
\end{align*}
We observe that after substituting $x=1$ the unknown $D^+(0,y; t)$ vanishes, and we are left with the equation
\begin{align*}
    K(1,y)D^+(1,y; t) = y - t p_{-1} D^+(0,0; t).
\end{align*}
Now, we can use the kernel method (see Section~\ref{sec:introkernel}): 
Substituting $y=Y(t)$ from~\eqref{eq:XYDyck}, sets the left-hand side to zero, and allows us to solve for $D^+(0,0;t)$. 
This is legitimate, as $D^+(x,y;t)$ is a power series in $t$ with polynomial coefficients in $x$ and $y$.
We get
\begin{align}
    \label{eq:DyckD00}
    D^+(0,0;t) = \frac{Y(t)}{p_{-1}t} 
    \qquad \text{ and } \qquad
    D^+(1,y;t) = \frac{y-Y(t)}{K(1,y)}.
\end{align}  
Hence, these two generating functions are algebraic of degree $2$.
After substituting these results back into the initial equation we apply the kernel method again with respect to the variable $x$.
Substituting now $x=X(y,t)$ from~\eqref{eq:XYDyck} we find a closed-form solution for $D^+(0,y;t)$. 
Note that for $p_{1}>0$ the kernel is quadratic in $x$, while for $p_{1}=0$ it is linear. Combining these results proves the claim on $D^+(x,y;t)$.
From this expression, we directly get the generating function $D^+(x,y;t)$ of N-excursions, and a short computation in a computer algebra system like Maple shows the claimed algebraic degrees.
All mentioned computations can be found in the accompanying Maple worksheet~\cite{gitlabproject}.
\end{proof}

Note that by construction of N-meanders, if the maximum of the reachable points is $0$ also the minimum is $0$. 
In terms of generating functions, we have $D^+(x,0;t) = D^+(0,0;t)$.

\begin{remark}[Case $p_{1}=0$]
    The closed form for $D^+(x,y;t)$ in the case of $p_{1}=0$ is the same as the limit $p_{1} \to 0$ of the form $p_{1}>0$. 
    Hence, we do not distinguish these cases from now on, and the case $p_{1}=0$ should be interpreted as $p_{1} \to 0$.
\end{remark}

\begin{corollary}
    \label{cor:DyckNExcSymmetric}
    The generating function of Dyck N-excursions $D^+(0,1;t)$ is symmetric in $p_{-1}$ and~$p_{1}$.
\end{corollary}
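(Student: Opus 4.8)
The plan is to prove the symmetry combinatorially, by a weight-twisting involution on Dyck N-excursions, rather than by simplifying the degree-$4$ closed form of $D^{+}(0,1;t)$ from Theorem~\ref{th:other_Dyck_Nmeanders}. For an N-step $s \subset \integers$ write $-s := \{-a \mid a \in s\}$, and for an N-walk $w = (w_1,\dots,w_n)$ define its \emph{reversal} $\rho(w) := (-w_n,-w_{n-1},\dots,-w_1)$. On the Dyck N-step set $S_D$ this map acts by $\{-1\}\leftrightarrow\{1\}$ and $\{-1,1\}\mapsto\{-1,1\}$, so $\rho$ sends Dyck N-walks to Dyck N-walks, preserves the length, and is plainly an involution.

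First I would check that $\rho$ maps Dyck N-excursions to Dyck N-excursions. If $w$ is compatible with an excursion $v=(v_1,\dots,v_n)$ with ordinates $\wo_0,\dots,\wo_n$, set $\tilde v := (-v_n,\dots,-v_1)$. Its ordinates are $\tilde{\wo}_k = -\sum_{j=n-k+1}^{n} v_j = \wo_{n-k} - \wo_n = \wo_{n-k}$ (using $\wo_n=0$), so $\tilde{\wo}_k \geq 0$ for all $k$ and $\tilde{\wo}_n = \wo_0 = 0$; hence $\tilde v$ is again an excursion. Since $v_i \in w_i$ implies $-v_i \in -w_i$, the walk $\tilde v$ is compatible with $\rho(w)$, so $\rho(w)$ is a Dyck N-excursion. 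Being an involution, $\rho$ is therefore a bijection on the set of Dyck N-excursions of each fixed length.

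Next I would track the weight. By definition the weight of $w$ is $\prod_{i=1}^{n} p_{w_i}$, and $\rho$ exchanges every occurrence of $\{-1\}$ with an occurrence of $\{1\}$ and fixes every occurrence of $\{-1,1\}$; hence the weight of $\rho(w)$ is obtained from that of $w$ by the swap $p_{-1}\leftrightarrow p_{1}$. Finally, recall (from Definition~\ref{def:reachable_points} and the discussion around~\eqref{eq:bridgesandexcursions}) that $D^{+}(0,1;t) = \sum_{w} \big(\prod_{i} p_{w_i}\big)\, t^{|w|}$, the sum ranging over Dyck N-excursions: setting $x=0$ retains exactly the Dyck N-meanders with $\min^{+}(w)=0$, which are precisely those compatible with an excursion, and setting $y=1$ forgets the maximal reachable point. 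Summing the weight identity over the bijection $\rho$ then gives $D^{+}(0,1;t) = D^{+}(0,1;t)\big|_{p_{-1}\leftrightarrow p_{1}}$, which is the claim.

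There is no serious obstacle; the one point needing care is the verification that reversal-and-negation sends classical excursions to classical excursions and, crucially, that this lifts to N-walks because compatibility is checked stepwise and is preserved by negating each N-step. This is exactly what makes the combinatorial route go through and lets us sidestep a direct, and rather messy, simplification of the closed forms built from $Y(t)$ and $X(y,t)$ in~\eqref{eq:XYDyck}. I would also note that $\rho$ does \emph{not} yield a symmetry of the full series $D^{+}(x,y;t)$, because reversal need not preserve the meander property of a \resp\ non-excursion compatible walk (the partial sums of $\tilde v$ are nonnegative only when $\wo_n$ is the global minimum of $\wo$); it is precisely the specialization $x=0$, $y=1$ to N-excursions that turns $\rho$ into a clean weight-twisting bijection.
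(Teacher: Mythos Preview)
Your proof is correct and follows essentially the same combinatorial approach as the paper: the paper defines the involution $\Phi(\{-1\})=\{1\}$, $\Phi(\{1\})=\{-1\}$, $\Phi(\{-1,1\})=\{-1,1\}$ and sends an N-excursion $(s_1,\dots,s_n)$ to $(\Phi(s_n),\dots,\Phi(s_1))$, which is exactly your reversal $\rho$. The paper also notes (as you do in passing) that the symmetry can alternatively be read off the closed form in Theorem~\ref{th:other_Dyck_Nmeanders}, but the main argument is the same time-reversal bijection on compatible classical excursions.
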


\begin{proof}
    This follows immediately from the closed form of Theorem~\ref{th:other_Dyck_Nmeanders}. 
    Alternatively, we will now give a combinatorial explanation of this fact not using the generating function.
    
    Define the following involution $\Phi : S_D \mapsto S_D$:
    \begin{align*}
        \Phi(\{-1\}) &= \{1\}, &
        \Phi(\{1\}) &= \{-1\}, &
        \Phi(\{-1,1\}) &= \{-1,1\}.
    \end{align*}    
    Note that $\Phi$ is a generalization of a well-known involution on classical Dyck paths, reading the path right-to-left (or, equivalently, flipping the path horizontally); see~\cite{Banderieretal2020Latticepathology} for more such constructions.
    Now, let an N-excursion $w = (\vw_1,\dots,\vw_n)$ be given.
    The idea is now to reverse the time.
    We define another N-walk $w' = (\vw'_1,\dots,\vw'_n)$ by $\vw'_i = \Phi(\vw_{n+1-i})$.
    
    Let $v = (v_1,\dots,v_n)$ be an excursion compatible with $w$, \ie a classical excursion such that $v_i \in \vw_i$ for all $i=1,\dots,n$.
    Now, let $v'$ be the time-reversed classical excursion of $v$ (\ie perform a horizontal flip).
    Then, $v'$ is compatible with $w'$, as if $v_i \in \{-1\}$ then $v'_i \in \{1\}$ and vice versa, and if $v_i \in \{-1,1\}$ then $v'_i \in \{-1,1\}$.
    Hence, $w'$ is an N-excursion.
    Finally, note that this involution swaps the N-steps $\{-1\}$ and $\{1\}$ and therefore also the weights $p_{-1}$ and $p_{1}$.
\end{proof}

\begin{corollary}
    \label{cor:DyckNMeanderpq}
    The generating functions $D^+(1,y,t)$ and $D^+(0,0,t)$ depend only on $p_{-1}$ and $q = p_{1}+p_{-1,1}$.
\end{corollary}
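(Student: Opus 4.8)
The plan is to exploit the explicit closed forms for $D^+(1,y;t)$ and $D^+(0,0;t)$ obtained in the proof of Theorem~\ref{th:other_Dyck_Nmeanders}, namely \eqref{eq:DyckD00}, and to check directly that all the weight-dependent quantities appearing there can be rewritten in terms of $p_{-1}$ and $q := p_{1}+p_{-1,1}$ alone. First I would recall that by \eqref{eq:DyckD00} both $D^+(0,0;t) = Y(t)/(p_{-1}t)$ and $D^+(1,y;t) = (y-Y(t))/K(1,y)$ are expressed entirely through $Y(t)$ and $K(1,y)$. So it suffices to show that each of $Y(t)$ and $K(1,y)$ depends on the weights only through $p_{-1}$ and $q$.

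For $Y(t)$ this is immediate from the closed form in \eqref{eq:XYDyck}:
\begin{align*}
	Y(t) &= \frac{1-\sqrt{1-4p_{-1}(p_{1}+p_{-1,1})t^2}}{2(p_{1}+p_{-1,1})t}
	= \frac{1-\sqrt{1-4 p_{-1} q\, t^2}}{2 q\, t},
\end{align*}
which visibly involves $p_{1}$ and $p_{-1,1}$ only through their sum $q$. For $K(1,y)$ we use \eqref{eq:minmaxchangepoly}: substituting $x=1$ into $S(x,y) = \frac{p_{-1}}{xy} + p_{1}xy + p_{-1,1}\frac{y}{x}$ gives $S(1,y) = \frac{p_{-1}}{y} + (p_{1}+p_{-1,1}) y = \frac{p_{-1}}{y} + q y$, and hence $K(1,y) = y\bigl(1 - t S(1,y)\bigr) = y - t p_{-1} - t q y^2$, again depending on the weights only through $p_{-1}$ and $q$. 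Plugging these two observations into \eqref{eq:DyckD00} shows that $D^+(1,y;t)$ and $D^+(0,0;t)$ are functions of $p_{-1}$, $q$, $y$, and $t$ only, which is the claim.

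I do not expect any serious obstacle here: the corollary is essentially a bookkeeping consequence of the fact that, after the specialization $x=1$, the step $\{1\}$ (contributing $p_{1}xy$) and the step $\{-1,1\}$ (contributing $p_{-1,1}\frac{y}{x}$) both collapse to the same monomial $y$ in $S(1,y)$, so only their combined weight $q$ survives. One could optionally add a one-line combinatorial remark: when tracking only $\max^+(w)$ (variable $y$) and ignoring $\min^+(w)$ (i.e.\ setting $x=1$), the N-steps $\{1\}$ and $\{-1,1\}$ have exactly the same effect on the maximum reachable point of a meander, so the enumeration cannot distinguish them and depends only on $p_{-1}$ and $p_{1}+p_{-1,1}$; but the generating-function computation above already suffices.
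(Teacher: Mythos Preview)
Your proof is correct and follows essentially the same approach as the paper: the paper also notes that the result ``follows directly from the closed form of Theorem~\ref{th:other_Dyck_Nmeanders}'', and your argument simply spells this out by checking that $Y(t)$ and $K(1,y)$ depend on the weights only through $p_{-1}$ and $q$. Your optional combinatorial remark is in the same spirit as the paper's alternative justification via Theorem~\ref{theo:NMeanderalgebraic} and Remark~\ref{rem:NMeanderColors}, which generalizes the observation to arbitrary N-step sets.
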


\begin{proof}
    This result follows again directly from the closed form of Theorem~\ref{th:other_Dyck_Nmeanders}. 
    Alternatively, the claim (even for general N-steps) follows directly from Theorem~\ref{theo:NMeanderalgebraic} and Remark~\ref{rem:NMeanderColors}.
\end{proof}
 
Next we will answer the counting problem in the unweighted model. 
Moreover, we discover bijections to subfamilies of \emph{bicolored Dyck walks}, which are classical Dyck walks in which the up steps have one of two possible colors.

\begin{corollary}
    \label{cor:NMeandersDyckOnlyt}
  For $p_{-1}=p_{1}=p_{-1,1}=1$ the generating functions of unweighted Dyck N-meanders, N-excursions, and N-excursions with reachable set $\{0\}$ are
  \begin{align*}
      D^+(1,1,t)
      &= -\frac{1-4t-\sqrt{1-8t^2}} {4t(1-3t)} 
      = 1 + 2t + 6t^2 + 16t^3 + 48t^4 + \ldots, 
      & (\text{\OEISs{A151281}})
      \\
      D^+(0,1,t) &= \frac{1-8t^2-(1-12t^2)\sqrt{1-8t^2}} {8t^2(1-9t^2)} 
      = 1 + 4t^2 + 28t^4 + 224t^6 + 1888t^8 + \ldots, 
      & (\text{\OEISs{A368234}})
      \\
      D^+(0,0,t) 
      &= \frac{1-\sqrt{1-8 t^2}}{4 t^2} 
      = 1 + 2t^2 + 8t^4 + 40t^6 + 224t^8 + \ldots      
      & (\text{\OEISs{A151374}})
  \end{align*}
    Asymptotically, we get
  \begin{align*}
  	[t^n]D^+(1,1,t) &= \frac{3^n}{2} + (1 + (-1)^n) \left(3 \sqrt{2} + 4 \right) \frac{8^{n/2}}{\sqrt{\pi n^3}} + \bigO\left(\frac{8^{n/2}}{n^{5/2}}\right), \\
  	[t^n]D^+(0,1,t) &= \frac{1+(-1)^n}{2}\left( \frac{3^n}{4} + 4\sqrt{2} \frac{8^{n/2}}{\sqrt{\pi n^3}}  + \bigO\left(\frac{8^{n/2}}{n^{5/2}}\right) \right), \\
  	[t^n]D^+(0,0,t) &= (1+(-1)^n) \frac{\sqrt{2} 8^{n/2}}{\sqrt{\pi n^3}} \left( 1 - \frac{9}{4n} + \bigO\left(\frac{1}{n^2}\right)\right).
  \end{align*}
  Furthermore, N-meanders are in bijection with bicolored Dyck meanders of the same length enumerated by~\OEIS{A151281},
  and N-excursions ending in $\{0\}$ are in bijection with bicolored Dyck excursions enumerated by~\OEIS{A151374}.
\end{corollary}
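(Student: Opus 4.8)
\emph{Approach and closed forms.} The plan is to specialize Theorem~\ref{th:other_Dyck_Nmeanders} and the intermediate identities of its proof to $p_{-1}=p_{1}=p_{-1,1}=1$, then extract the asymptotics by singularity analysis, and finally deduce the two combinatorial bijections from the two-dimensional lattice path model of Figure~\ref{fig:DyckReflectionAbsorption}. Setting all weights to $1$ in~\eqref{eq:XYDyck} gives $Y(t)=\frac{1-\sqrt{1-8t^2}}{4t}$ and $X(1,t)=\frac{1-\sqrt{1-8t^2}}{2t}=2Y(t)$, while $S(1,1)=3$ in~\eqref{eq:minmaxchangepoly}, hence $K(1,1)=1-3t$ and $K(0,1)=-2t$. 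From the first identity in~\eqref{eq:DyckD00} we read off $D^+(0,0,t)=Y(t)/t$ directly. Substituting $y=1$ in the second identity of~\eqref{eq:DyckD00} yields $D^+(1,1,t)=\bigl(1-Y(t)\bigr)/(1-3t)$, which simplifies to the stated form. For $D^+(0,1,t)$ one substitutes $x=0$, $y=1$ in the first branch of the closed form of $D^+(x,y;t)$ from Theorem~\ref{th:other_Dyck_Nmeanders}, replaces $X(1,t)$ by $2Y(t)$, and uses the defining quadratic $2tY(t)^2-Y(t)+t=0$ to eliminate the square power of $Y(t)$; rationalizing the remaining square root produces the claimed expression. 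These are routine manipulations (carried out in the accompanying Maple worksheet), and the listed initial coefficients follow by series expansion.

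\emph{Asymptotics.} Each of the three generating functions has the shape $R_1(t)+R_2(t)\sqrt{1-8t^2}$ with $R_1,R_2$ rational, whose only poles lie on $|t|=1/3$ (from the factors $1-3t$, resp.\ $1-9t^2$; the function $D^+(0,0,t)$ has no pole at all), whereas $\sqrt{1-8t^2}$ has branch points at $t=\pm\tfrac{1}{2\sqrt2}$, of strictly larger modulus since $\tfrac13<\tfrac1{2\sqrt2}$. Hence the leading term is polar: a residue computation at $t=1/3$, and for the even functions $D^+(0,1,t)$ and $D^+(0,0,t)$ also at $t=-1/3$, gives the contributions $\tfrac{3^n}{2}$ and $\tfrac{1+(-1)^n}{2}\cdot\tfrac{3^n}{4}$, while $D^+(0,0,t)$ has no such term. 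The second-order term comes from the square-root singularities at $t=\pm\tfrac1{2\sqrt2}$, where the rational prefactor is analytic, so the local behaviour is $c\,(1\mp 2\sqrt2\,t)^{1/2}$; the Flajolet--Odlyzko transfer theorem then yields a contribution of order $8^{n/2}n^{-3/2}$, and adding the two conjugate contributions produces the stated constants $3\sqrt2(1+(-1)^n)+4(1-(-1)^n)$, $\sqrt8\cdot\tfrac{1+(-1)^n}{2}$, and $\sqrt2(1+(-1)^n)$, together with the error terms obtained from the next order of the local expansions. For $D^+(0,0,t)$ one may alternatively invoke $D^+(0,0,t)=\sum_{n\ge0}2^nC_n t^{2n}$ from the next paragraph and the classical asymptotics of the Catalan numbers $C_n$.

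\emph{Bijections.} In the model of Figure~\ref{fig:DyckReflectionAbsorption} a Dyck N-meander is a walk in the first quadrant with steps $(1,1),(-1,1),(-1,-1)$, its second coordinate recording $\max^+(w)$ and its first coordinate $\min^+(w)$. Project onto the second coordinate: $(1,1)$ and $(-1,1)$ both become an up-step, remembered as one of two colours, while $(-1,-1)$ becomes a down-step. The resulting $\pm1$-path stays nonnegative because $\max^+(w)\ge0$, and by the case analysis in the proof of Proposition~\ref{th:Dyck_Nmeanders} a down-step preserves the N-meander property precisely when the current height is positive, so the image is a bicolored Dyck meander. Conversely a bicolored Dyck meander prescribes the N-steps uniquely, and an induction on the length using Lemma~\ref{lemma:minmax} (the reachable points always form the interval $\{\min^+,\min^++2,\dots,\max^+\}$, so $\min^+\ge0$ is automatic) shows the reconstructed N-walk is again an N-meander; this gives a length-preserving bijection, which restricts to one between N-excursions ending in $\{0\}$ (final $\max^+=0$, i.e.\ the projected path returns to $0$) and bicolored Dyck excursions. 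A length-$2n$ bicolored Dyck excursion is a Dyck excursion together with a $2$-colouring of its $n$ up-steps, so there are $2^nC_n$ of them, matching $D^+(0,0,t)=\sum_{n\ge0}2^nC_n t^{2n}$ (put $z=2t^2$ in $\tfrac{1-\sqrt{1-4z}}{2z}=\sum_{n\ge0}C_n z^n$); this is \OEIS{A151374}, and comparing $D^+(1,1,t)$ with the generating function of \OEIS{A151281} identifies the meander count (consistently, $D^+(1,1,t)=D^+(0,0,t)/(1-2t\,D^+(0,0,t))$, the standard arch decomposition of meanders into excursions).

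\emph{Main obstacle.} The closed-form and asymptotic parts are essentially mechanical once Theorem~\ref{th:other_Dyck_Nmeanders} is available. The one point requiring genuine care is the well-definedness of the inverse of the projection, namely that reading N-steps off an \emph{arbitrary} bicolored Dyck meander always produces a legitimate N-meander (so that $\min^+$ and $\max^+$ remain meaningful along the way). This is exactly where Lemma~\ref{lemma:minmax} and the down-step analysis in the proof of Proposition~\ref{th:Dyck_Nmeanders} do the real work.
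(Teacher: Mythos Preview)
Your proof is correct and follows essentially the same approach as the paper: specializing Theorem~\ref{th:other_Dyck_Nmeanders} and~\eqref{eq:DyckD00} for the closed forms, singularity analysis for the asymptotics, and tracking the top-most trajectory for the bijection (your projection onto the second coordinate of the 2D model is exactly the paper's ``follow the top-most trajectory'' map $\{-1\}\mapsto(1,-1)$, $\{1\},\{-1,1\}\mapsto(1,1)$ in two colours). You supply considerably more detail than the paper's three-sentence proof, including the verification that the inverse of the bijection lands in N-meanders; the simplest way to phrase that step is to note directly that the classical walk $v_i=\max(s_i)$ is compatible with $w$ and coincides with the (uncoloured) Dyck meander, hence stays nonnegative.
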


\begin{proof}
    The asymptotic results follow by singularity analysis directly from the closed forms; see Section~\ref{sec:introsingana}. 
    The idea of the bijections is to follow the top-most trajectory in the N-meanders. 
    In terms of steps, we map the $\{-1\}$ to $(1,-1)$, and both $\{-1,1\}$ and $\{1\}$ to $(1,1)$, each with a different color.
\end{proof}

\begin{remark}[Classical excursions contained in N-excursions]
    Let us answer the question of how many classical excursions are contained in an N-excursion in the unweighted model, \ie $p_{-1}=p_{1}=p_{-1,1}=1$.
    Let $c_{2n}$ be the total number of classical excursions contained in all N-excursions of length $2n$. 
    To compute them, we interpret every $\{-1,1\}$-N-step either as a classical up- or down-step. 
    Hence, in total there are $2$ possible up- and $2$ possible down-steps,
    and therefore $c_{2n} = \frac{4^n}{n+1} \binom{2n}{n}$.
    We conclude that the average number of classical excursions among all N-excursions of length $2n$ is asymptotically equal to
    \begin{align*}
        \frac{c_{2n}}{[t^{2n}]D^+(0,1,t)} \sim \frac{4}{\sqrt{\pi n^3}} \left(\frac{4}{3}\right)^{2n}.
    \end{align*}
    Therefore, on average, a random N-excursion contains an exponential amount of classical excursions. 
\end{remark}

Let us now interpret the weights as probabilities. Again we observe significant simplifications.

\newcommand{\exGr}{R}
\begin{corollary}
    For $p_1+p_{-1}+p_{-1,1}=1$ with $p_1, p_{-1}, p_{-1,1} \geq 0$, the (probability) generating function $D^+(1,1,t)$ of Dyck N-meanders depends only on $p_{-1}$ and is given by
  \begin{align*}
      D^+(1,1,t) &= - \frac{1 - 2(1-p_{-1})t - \sqrt{1-4p_{-1}(1-p_{-1})t^2}}{2(1-p_{-1})t(1-t)}.
  \end{align*}
  Let $\exGr := 4p_{-1}(1-p_{-1})$.
  Then, the asymptotic probability of N-meanders of length $n$ satisfies
  \begin{align*}
  [t^n]D^+(1,1,t) =
        \begin{cases}
            \frac{1-2p_{-1}}{1-p_{-1}} + \bigO\left(\frac{\exGr^n}{n^{3/2}}\right) & \text{ if } \quad 0 \leq p_{-1} < \frac{1}{2},\\
            \sqrt{\frac{2}{\pi n}} - \frac{\sqrt{2} \, (2-(-1)^n)}{4 \sqrt{\pi n^3}} + \bigO(n^{-5/2}) & \text{ if } \quad p_{-1} = \frac{1}{2},\\
            \frac{\sqrt{2} \, p_{-1}}{\sqrt{\pi}} \frac{1+(-1)^n+(1-(-1)^n)\sqrt{\exGr}}{(2p_{-1}-1)^2} \frac{\exGr^n}{n^{3/2}} + \bigO\left(\frac{\exGr^n}{n^{5/2}}\right) & \text{ if } \quad \frac{1}{2} < p_{-1} < 1.
        \end{cases}
  \end{align*}
\end{corollary}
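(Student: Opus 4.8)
The plan is to obtain an explicit closed form by specializing the formula for $D^+(1,y;t)$ already established in the proof of Theorem~\ref{th:other_Dyck_Nmeanders}, and then to run singularity analysis; the only genuinely delicate point is that the position of the square-root branch point relative to the pole at $t=1$ changes as $p_{-1}$ crosses $\tfrac12$.

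\textbf{Step 1: the closed form.} By~\eqref{eq:DyckD00} we have $D^+(1,y;t) = \bigl(y-Y(t)\bigr)/K(1,y)$, so I would simply set $y=1$: $D^+(1,1,t) = \bigl(1-Y(t)\bigr)/K(1,1)$. The probability normalization $p_{-1}+p_1+p_{-1,1}=1$ does two things here: it makes $K(1,1) = 1 - t\,S(1,1) = 1-t$, and it gives $p_1+p_{-1,1} = 1-p_{-1}$, so that by~\eqref{eq:XYDyck}
\[
  Y(t) = \frac{1-\sqrt{1-4p_{-1}(1-p_{-1})t^2}}{2(1-p_{-1})t}.
\]
Substituting this into $\bigl(1-Y(t)\bigr)/(1-t)$ and clearing the inner fraction is a one-line computation that produces exactly the claimed expression. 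Since the result involves only $p_{-1}$, this already proves that $D^+(1,1,t)$ depends on $p_{-1}$ alone (alternatively one invokes Corollary~\ref{cor:DyckNMeanderpq}, noting that $q=p_1+p_{-1,1}=1-p_{-1}$ is now a function of $p_{-1}$).

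\textbf{Step 2: locating the singularities.} The function is algebraic of degree $2$, with only two candidate dominant singularities: a simple pole at $t=1$ from the denominator factor $1-t$, and the branch points of $\sqrt{1-4p_{-1}(1-p_{-1})t^2}$ at $t=\pm\rho$ with $\rho := \bigl(4p_{-1}(1-p_{-1})\bigr)^{-1/2}$. By the AM--GM inequality $4p_{-1}(1-p_{-1})\le 1$ with equality iff $p_{-1}=\tfrac12$, hence $\rho\ge 1$, with equality iff $p_{-1}=\tfrac12$. The one crucial algebraic observation is that the discriminant is a perfect square at $t=1$: $1-4p_{-1}(1-p_{-1}) = (1-2p_{-1})^2$. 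Evaluating the numerator of the closed form at $t=1$ then gives $2(1-p_{-1})-1+|1-2p_{-1}|$, which equals $2(1-2p_{-1})$ when $p_{-1}<\tfrac12$ (a genuine simple pole) and $0$ when $p_{-1}\ge\tfrac12$ (the factor $1-t$ is cancelled, so $t=1$ is removable).

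\textbf{Step 3: the three regimes via transfer theorems} (Flajolet--Odlyzko singularity analysis, \cite{FS09}). If $0\le p_{-1}<\tfrac12$, then $t=1$ is the unique dominant singularity and a simple pole; its residue contributes the constant $\tfrac{1-2p_{-1}}{1-p_{-1}}$, and the branch points at $\pm\rho$, which now satisfy $\rho>1$, contribute only the exponentially smaller $\bigO$-term. If $\tfrac12<p_{-1}<1$, the pole is removed, so the dominant singularities are the two branch points $\pm\rho$ (still $\rho>1$), both of square-root type; writing the local expansions $\sqrt{1-4p_{-1}(1-p_{-1})t^2}\sim\sqrt2\,\sqrt{1\mp t/\rho}$ near $t=\pm\rho$, dividing by the analytic factor $1-t$ (whose values at $\pm\rho$ are $1\mp\rho$), and adding the two transfer-theorem contributions yields the stated parity-dependent coefficient. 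Finally, if $p_{-1}=\tfrac12$, the pole and the branch point $+\rho$ coalesce at $t=1$; after the cancellation the closed form collapses to $-\tfrac1t+\tfrac{\sqrt{1+t}}{t\sqrt{1-t}}$, which has a $(1-t)^{-1/2}$-type singularity at $t=1$ and a milder $(1+t)^{1/2}$-type one at $t=-1$, and expanding the former to second order at $t=1$ while adding the leading contribution from $t=-1$ gives $\sqrt{\tfrac{2}{\pi n}}-\tfrac{\sqrt2\,(2-(-1)^n)}{4\sqrt{\pi n^3}}+\bigO(n^{-5/2})$.

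\textbf{Main obstacle.} The delicate case is the boundary $p_{-1}=\tfrac12$: the factor $1-t$ is still present in the denominator but no longer produces a pole, and one must correctly account both for the coalescence (which turns it into a half-integer singularity) and for the second branch point $t=-1$, which is what injects the $(-1)^n$ into the second-order term. Everything else --- in particular the sign of $\rho-1$ that decides which regime one is in --- follows from the single fact that the discriminant equals $(1-2p_{-1})^2$ at $t=1$.
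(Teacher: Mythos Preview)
Your proposal is correct and follows exactly the approach the paper intends: specialize the closed form~\eqref{eq:DyckD00} at $y=1$ using the probability normalization, then run singularity analysis in the three regimes determined by whether the pole at $t=1$ and the branch points at $\pm\rho$ coincide or not. The paper's own proof is in fact much terser than yours---it simply says ``identify the regimes and apply singularity analysis, see the Maple worksheet''---so your write-up, with the key algebraic observation $1-4p_{-1}(1-p_{-1})=(1-2p_{-1})^2$ and the explicit simplification at $p_{-1}=\tfrac12$, supplies precisely the details the paper omits.
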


\begin{proof}
    After identifying the different regimes of convergence, the proof is a simple application of singularity analysis (see Section~\ref{sec:introsingana}).
	Most of these computations can be performed automatically; see the accompanying Maple worksheet~\cite{gitlabproject}.
\end{proof}

Finally, we come back to one of the starting questions from the networking motivation discussed in Section~\ref{sec:motivation}.

\begin{theorem}
\label{theo:dyckexcasymptitotics}
The probability for a random Dyck N-walk
of length $2n$ to be an N-excursion has for $n \to \infty$ the following asymptotic form where the roles of $p_{-1}$ and $p_{1}$ are interchangeable:
\begin{align*}
    \begin{cases}
        \frac{(1-2p_{1})(1-2p_{-1})}{(1-p_{1})(1-p_{-1})} + \bigO\left(\frac{\left(4p_{-1}(1-p_{-1})\right)^n}{n^{3/2}}\right) &
        \text{ if } \quad 0 \leq p_{1} \leq p_{-1} < \frac{1}{2},\\[2mm]
        \frac{1-2p_{1}}{(1-p_{1})\sqrt{\pi n}} + \bigO\left(\frac{1}{n^{3/2}}\right) &
        \text{ if } \quad 0 \leq p_{1} < \frac{1}{2}$ and $p_{-1} = \frac{1}{2},\\[2mm]
        \frac{1}{\sqrt{\pi n^3}} + \bigO\left(\frac{1}{n^{5/2}}\right) &
        \text{ if } \quad p_{1} = p_{-1} = \frac{1}{2}, \\[2mm]
        \gamma \frac{\left(4p_{-1}(1-p_{-1})\right)^n}{\sqrt{n^3}} + \bigO\left(\frac{\left(4p_{-1}(1-p_{-1})\right)^n}{n^{5/2}}\right) &
        \text{ if } \quad 0 \leq p_{1} < \frac{1}{2} < p_{-1} < 1$ and $p_{-1} + p_{1} \leq 1,
    \end{cases} \\
    \text{ where } \quad \gamma = 
    \begin{cases}
        \frac{1}{\sqrt{\pi}}, & \text{ if } p_{-1}+p_{1}=1,\\  
        \frac{2 p_{-1}}{\sqrt{\pi}} \frac{\sqrt{p_{-1, 1}p_{-1}(1-p_{-1})(p_{-1}-p_{1})}-p_{-1}(1-p_{-1})+(1-p_{1})/2}{(1-p_{1})(2p_{-1}-1)^2}, & \text{ if } p_{-1}+p_{1}<1.\\    
    \end{cases}
\end{align*}
\end{theorem}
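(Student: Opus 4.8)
The plan is to derive everything from the closed form for $D^+(0,1;t)$ obtained in Theorem~\ref{th:other_Dyck_Nmeanders}, combined with the simple shape of $D(x,y;t)$ from~\eqref{eq:DyckNWalkGF}. Recall that the probability that a random Dyck N-walk of length $2n$ is an N-excursion is
\[
    \frac{[t^{2n}] D^+(0,1;t)}{[t^{2n}] D(1,1,t)},
\]
where we take the weights normalized so that $p_{-1}+p_1+p_{-1,1}=1$. The denominator is trivial: $D(1,1,t) = 1/(1-t)$, so $[t^{2n}]D(1,1,t)=1$, and the whole question reduces to the asymptotics of $[t^{2n}]D^+(0,1;t)$. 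By Corollary~\ref{cor:DyckNExcSymmetric} the generating function $D^+(0,1;t)$ is symmetric in $p_{-1}$ and $p_1$, which justifies the phrase ``the roles of $p_{-1}$ and $p_1$ are interchangeable'' and lets us assume $p_1 \le p_{-1}$ throughout.

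First I would substitute $x=0$, $y=1$ into the closed form for $D^+$, using the explicit expressions~\eqref{eq:XYDyck} for $Y(t)$ and $X(1,t)$. This yields $D^+(0,1;t)$ as an explicit algebraic function of $t$ (with $p_{-1},p_1$ as parameters, eliminating $p_{-1,1}=1-p_{-1}-p_1$), of the form (rational in $t$) plus (rational in $t$) times $\sqrt{1-4p_{-1}(1-p_{-1})t^2}\,\cdot\,\sqrt{\dots}$ — in fact one should check that only the single square root $\sqrt{1-4p_{-1}(1-p_{-1})t^2}$ survives in $D^+(0,1;t)$, since $X(1,t)$ and $Y(t)$ share the same radicand when $y=1$ (both involve $p_1(p_{-1}+p_{-1,1})=p_1(1-p_1)$ versus $p_{-1}(p_1+p_{-1,1})=p_{-1}(1-p_{-1})$ — here I would double check which radical actually appears after the substitution; the $p_1=p_{-1}$ degree-$2$ claim in Theorem~\ref{th:other_Dyck_Nmeanders} is the sanity check). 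The singularities of $D^+(0,1;t)$ are then of two types: poles coming from the denominators (the relevant one being $t=\pm 1$ scaled appropriately, i.e.\ the factor vanishing when $tS(1,1)=1$, giving a polar singularity at $t^2 = 1/(\text{something})$ — in the normalized model the polar part contributes the constant term), and the square-root branch point at $t_c$ with $t_c^2 = 1/(4p_{-1}(1-p_{-1}))$, hence $4p_{-1}(1-p_{-1})$ governs the subexponential/exponential growth of the correction.

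The core of the proof is then a case analysis on the location of the dominant singularity, which is exactly the partition into cases (1)--(4) in the statement. The branch point sits at $|t|^2 = 1/(4p_{-1}(1-p_{-1}))$; since $4p_{-1}(1-p_{-1}) \le 1$ with equality iff $p_{-1}=1/2$, the branch point is at radius $\ge 1$ and coincides with radius $1$ precisely when $p_{-1}=1/2$. Meanwhile the genuine pole (if it is not cancelled) sits at radius $1$. Comparing: if $p_{-1}<1/2$ (case 1) the pole at radius $1$ dominates and gives a nonzero constant limit — I would extract the residue and simplify to $\frac{(1-2p_1)(1-2p_{-1})}{(1-p_1)(1-p_{-1})}$, with the square-root branch point, being strictly further out, contributing only the exponentially small error $O((4p_{-1}(1-p_{-1}))^n/n^{3/2})$. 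If $p_{-1}=1/2$ but $p_1<1/2$ (case 2) the pole and the branch point collide at radius $1$; here one expects a $n^{-1/2}$ term from the merged singularity — I would expand $D^+(0,1;t)$ near $t^2=1$ and read off the coefficient of $(1-t^2)^{-1/2}$, giving $\frac{1-2p_1}{(1-p_1)\sqrt{\pi n}}$; the polar part of the Laurent expansion should vanish here, which is the analytic manifestation of ``$p_{-1}=1/2$'' killing the constant term. If $p_1=p_{-1}=1/2$ (case 3) an additional cancellation occurs — the $(1-t^2)^{-1/2}$ coefficient itself vanishes and one gets a $(1-t^2)^{1/2}$ behaviour, yielding the $n^{-3/2}$ term $\frac{1}{\sqrt{\pi n^3}}$ (consistent with the unweighted Table entry divided by $3^n$ and specialized). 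Finally if $p_{-1}>1/2$ (case 4) the branch point at radius $<1$ is dominant, the singularity is a pure square root, and standard transfer theorems give growth $(4p_{-1}(1-p_{-1}))^n n^{-3/2}$ with the constant $\gamma$ obtained by expanding the algebraic function at its branch point and normalizing by $\Gamma(-1/2)$ — the sub-case split $p_{-1}+p_1=1$ versus $<1$ reflects whether $p_{-1,1}=0$, which changes the leading singular coefficient.

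The main obstacle I anticipate is \emph{not} the asymptotic transfer (that is routine singularity analysis \`a la~\cite{FS09} once the singular expansions are in hand) but rather the bookkeeping of \emph{which} singularity dominates and the algebraic simplification of the constants, especially in the boundary cases $p_{-1}=1/2$ and $p_1=p_{-1}=1/2$ where one must verify nontrivial cancellations of the would-be leading terms. In particular, showing that the polar contribution to $[t^{2n}]D^+(0,1;t)$ vanishes exactly when $p_{-1}=1/2$, and that the next term also vanishes when additionally $p_1=1/2$, requires carefully tracking the numerator of the partial-fraction decomposition at $t^2=1$; I would do this by writing $D^+(0,1;t) = \frac{A(t)}{1-t^2} + \frac{B(t)}{1-t^2}\sqrt{1-4p_{-1}(1-p_{-1})t^2} + (\text{analytic at }t^2=1)$ and evaluating $A(t)+B(t)\sqrt{\dots}$ at $t^2=1$, where at $p_{-1}=1/2$ the radical also equals $0$ and so on. All of these are finite computer-algebra computations and, as the paper notes, are carried out in the accompanying Maple worksheet; the write-up would present the singular expansions in each of the four regimes and invoke the transfer theorem, with the constant $\gamma$ displayed in the two sub-cases as in the statement.
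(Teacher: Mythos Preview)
Your approach is essentially the paper's: substitute $x=0$, $y=1$ into the closed form of Theorem~\ref{th:other_Dyck_Nmeanders}, pass to $F(z)=D^+(0,1;\sqrt{z})$, locate the singularities, and run a case-by-case transfer analysis. One correction, though: your expectation that ``only the single square root $\sqrt{1-4p_{-1}(1-p_{-1})t^2}$ survives'' is wrong in general. The radicands of $Y(t)$ and $X(1,t)$ are $1-4p_{-1}(1-p_{-1})t^2$ and $1-4p_1(1-p_1)t^2$ respectively, and \emph{both} are present---this is precisely why Theorem~\ref{th:other_Dyck_Nmeanders} states that $D^+(0,1;t)$ has degree~$4$ unless $p_1=p_{-1}$. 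Accordingly the paper lists three candidate singularities in~$z$,
\[
    \rho_1=\frac{1}{4p_{-1}(1-p_{-1})},\qquad
    \rho_2=\frac{1}{4p_{1}(1-p_{1})},\qquad
    \rho_3=1,
\]
rather than two. Under the standing hypothesis $p_1\le p_{-1}$ with $p_1+p_{-1}\le 1$ one checks $\rho_1\le\rho_2$, so $\rho_2$ is never strictly dominant and your two-singularity picture happens to yield the correct case split; but the local Puiseux expansions in cases~(2) and~(3), where singularities coalesce at $z=1$, must be carried out with the full degree-$4$ expression (in case~(3) all three coincide). Apart from this, your outline---residue at the pole in case~(1), $(1-z)^{-1/2}$ behaviour in case~(2), $(1-z)^{1/2}$ in case~(3), pure branch point at $\rho_1<1$ in case~(4), constants delegated to the Maple worksheet---matches the paper.
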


\begin{proof}
    Substituting $x=0$ and $y=1$ into $D^+(x,y;t)$ from Theorem~\ref{th:other_Dyck_Nmeanders} we get the generating function of N-excursions. 
    The next step is to apply singularity analysis (see Section~\ref{sec:introsingana}) to compute the asymptotics.   
    For technical reasons it is simpler to work with the function $F(z) = D^+(0,1;\sqrt{z})$. This is legitimate as every excursion has  even length.
    The possible singularities are the roots of the radicands in $Y$ and $X$ from~\eqref{eq:XYDyck}, as well as the root of the denominator of $F(z)$, which leads, respectively, to the following three candidates:
    \begin{align}
        \rho_1 &= \frac{1}{4p_{-1}(1-p_{-1})}, &
        \rho_2 &= \frac{1}{4p_{1} (1-p_{1})}, \label{eq:DyckNExcSingat1} &
        \rho_3 &= 1.
    \end{align}
    Depending on the choice of the weights, different singularities become dominant, by being closest to the origin. 
    This gives the following singular expansions at the respective singularities
    \begin{align}
    \label{eq:DyckExcSingExp}
        F(z) &\!=\!
            \begin{cases}
                \frac{(1-2p_{1})(1-2p_{-1})}{(1-p_{1})(1-p_{-1}) (1-z)} + \bigO(1-z/\rho_1) & \text{ if } 0 \leq p_{1} \leq p_{-1} < \frac{1}{2},\\
                %
                \frac{1-2p_{1}}{1-p_{1}} \frac{1}{\sqrt{1-z}} + \bigO(1) & \text{ if } 0 \leq p_{1} < \frac{1}{2} \text{ and } p_{-1} = \frac{1}{2},\\
                %
                2 - 2\sqrt{1-z} + \bigO(1-z) & \text{ if } p_{1} = p_{-1} = \frac{1}{2},\\
                %
                \gamma_0 + \gamma_1 \sqrt{1-z/\rho_1} + \bigO(1-z/\rho_1) & \text{ if } 0 \leq p_{1} < \frac{1}{2} < p_{-1} < 1 \text{ and } p_{-1} + p_{1} \leq 1,
            \end{cases}
    \end{align}
    where $\gamma_0$ and $\gamma_1= -2\sqrt{\pi} \gamma$ are two explicit constants in $p_{-1}$, $p_{-1,1}$, and $p_{1}$.
	As in the case of N-meanders, most of these computations can be performed automatically, yet the computations are slightly more tedious; see the accompanying Maple worksheet~\cite{gitlabproject}. 
	For more details see the below proof of Theorem~\ref{th:LawDyckExcMax}, where an additional parameter is considered as well.
\end{proof}

In Figure~\ref{fig2} we compare the theoretical results with simulations for three different probability distributions on $p_{-1}$, $p_{1}$, and $p_{-1,1}$. 
These nicely exemplify and confirm three of the four possible regimes of convergence.

\begin{figure*}[ht!]
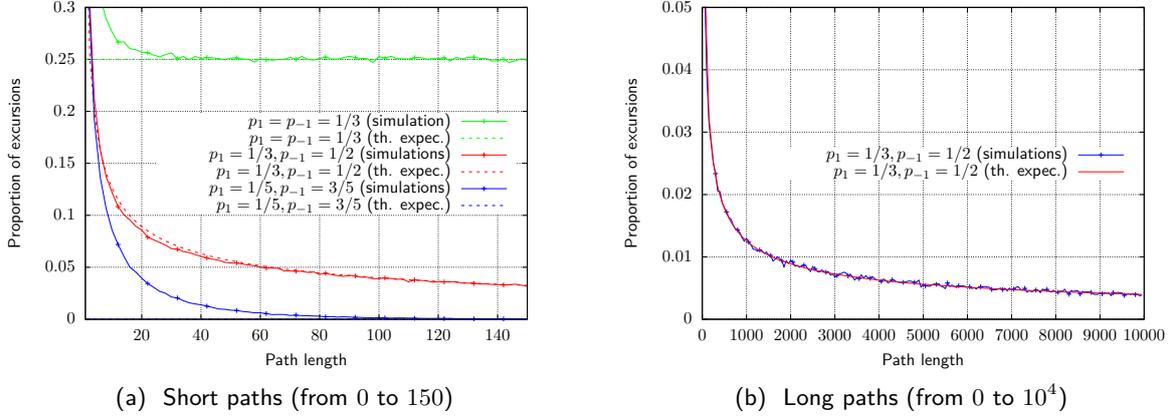

	\centering
	\subfloat[ Short paths (from $0$ to $150$)]{%
    \resizebox{0.47\textwidth}{!}{\input{Dyck_all}}
		\label{fig:short_dyck}}
        \hfill
	\subfloat[ Long paths (from $0$ to $10^4$)]{%
    \resizebox{0.47\textwidth}{!}{\input{Long_Dyck_inequal}}
		\label{fig:long_dyck}}
    \caption{Comparison of theoretical expectation and averaged simulation (over $10^5$ runs) of the proportion of Dyck N-excursions among Dyck N-walks.}
	\label{fig2} 
\end{figure*}

\subsection{Limit laws of N-excursions}
\label{sec:limit_laws_NExcursions}

In order to better understand the influence of the probabilities of the N-steps, we will analyze some parameters in N-excursions.
In particular, we will derive the limit law of the maximum reachable point and the number of returns to zero. 

The following result will depend on the relative change in the minimum and maximum reachable points. 
We define the \emph{$x$-drift $\delta_x= \E(x)$} and the \emph{$y$-drift $\delta_y= \E(y)$} in the probability space defined by the N-step set. 
The drift vector is given by $\delta = (\delta_x,\delta_y)$ where for Dyck N-walks we have
\begin{align}
\label{eq:DyckDriftVector}
\begin{aligned}
    \delta_x &= p_1-p_{-1,1} - p_{-1} = 2p_{1}-1, \\
    \delta_y &= p_1+p_{-1,1} - p_{-1} = 1-2p_{-1}.
\end{aligned}
\end{align}
This leads to the following theorem on the law of the final maximum reachable point, in which several phase transitions depending on the drift vector become visible; see also Figures~\ref{fig:DyckExcLawMax} and \ref{fig:lawsFinalandReturns}.

\begin{theorem}
    \label{th:LawDyckExcMax}
    Let $p_{-1,1} \neq 0$. Let $X_n$ be the random variable of the distribution of the final maximal point of even altitude of an N-excursion of length $2n$ (odd altitude and odd lengths do not exist) drawn uniformly at random defined by
    \begin{align*}
        \proba\left(X_n = k\right) &= \frac{[t^{2n} y^{2k}] D^+(0,y;t)}{[t^{2n}]D^+(0,1;t)}.
    \end{align*}
    Then, $X_n$ admits a limit distribution, with the limit law being dictated by the value of the drift~$\delta$ (see also Figure~\ref{fig:DyckExcLawMax}):
    \pagebreak[2]
    \begin{enumerate}
        \item For negative $y$-drift, $\delta_y < 0$, the limit distribution is discrete given by the following probability generating function that depends on $X(y,t)$ from~\eqref{eq:XYDyck}:
        \begin{align*}
            \proba\left(X_n = k\right) &= [y^{2k}] \omega(y) + \bigO\left(\frac{1}{n}\right),  \qquad \text{ where }\\
            \omega(y) &= \frac{p_{-1} + p_{-1,1}}{p_{-1}+p_{-1,1}y^2} \frac{X(y,r_1)^2}{X(1,r_1)^2} \frac{1-X(1,r_1)^2}{1-X(y,r_1)^2},
        \end{align*}
        and $r_1 = (4p_{-1}(1-p_{-1}))^{-1/2}$. 
        The mean is given by
        \begin{align*}
            \E(X_n) = \frac{2(1-p_{-1})}{2p_{-1}-1} + \frac{2(2p_{-1}-p_{1})}{(2p_{-1}-1)(1-p_{1})} \sqrt{\frac{p_{-1} p_{-1,1}(1-p_{-1})}{p_{-1}-p_{1}}} + \bigO\left(\frac{1}{n}\right).
        \end{align*}
        \item For zero $y$-drift, $\delta_y = 0$, and non-zero $x$-drift,  $\delta_x\neq 0$, the (rescaled) random variable $X_n$ converges in law to a Rayleigh distribution given by
            \begin{align*}
                 \frac{X_n}{\sqrt{n}} &\inlaw \Rc(2), 
            \end{align*}
            where $\Rc(\lambda)$ has the density $\lambda x e^{-\lambda x^2/2}$ for $x \geq 0$.
        \item For zero $x$-drift, $\delta_x = 0$, and non-zero $y$-drift, $\delta_y>0$ (the constraint $\delta_x = 0$ already implies $\delta_y \geq 0$), 
        the (rescaled) random variable $X_n$ converges in law to the convolution of a normal and negative Rayleigh distribution given by
            \begin{align*}
                \frac{X_n - \mu n}{\sqrt{\sigma^2 n}}  &\inlaw \Nc(0,1) * \Rc^-(\lambda), \qquad \text{ where } \\ 
                 \mu = 1-2p_{-1}, \qquad 
                 &\sigma^2 = 2 p_{-1} (1 - 2 p_{-1}), \qquad
                 \lambda = \frac{1-2p_{-1}}{p_{-1}},
            \end{align*}
            and $\Rc^-(\lambda)$ has the density $-\lambda x e^{-\lambda x^2/2}$ for $x \leq 0$, \ie it has negative support.
            Expected value and variance are given by
            \begin{align*}
                \E(X_n) = \mu n - p_{-1} \sqrt{\pi n} + \bigO(1) \quad \text{ and } \quad
                \V(X_n) = p_{-1} (2 - p_{-1} \pi) n + \bigO(\sqrt{n}).
            \end{align*}
        \item For positive $y$-drift, $\delta_y >0$, and non-zero $x$-drift, $\delta_x \neq 0$, the (rescaled) random variable $X_n$ converges in law to a normal distribution given by
        \begin{align*}
            \frac{X_n - \mu n}{\sqrt{\sigma^2 n}} \inlaw \Nc(0,1),
        \end{align*}
        where
        \begin{align*}
            \mu &= 
                \begin{cases} 
                    1-2p_{-1} & \mbox{for } \delta_x<0,\\
                    \frac{p_{-1,1}}{1-p_{1}} & \mbox{for } \delta_x>0,
                \end{cases} 
            &
            \sigma^2 &=  
                \begin{cases} 
                    2p_{-1}(1-p_{-1}) & \mbox{for } \delta_x<0,\\
                    \frac{p_{-1,1} p_{-1}}{(1-p_{1})^2} & \mbox{for } \delta_x>0.
                \end{cases} 
        \end{align*}
    \end{enumerate}
\end{theorem}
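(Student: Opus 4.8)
The plan is to run singularity analysis on an explicit closed form for $D^+(0,y;t)$ and to read off the limit law from the position and type of its dominant singularity in $z=t^2$, as $y$ ranges over a real neighbourhood of $1$. Setting $x=0$ in Theorem~\ref{th:other_Dyck_Nmeanders} (equivalently, substituting $x=X(y,t)$ in the kernel equation used in its proof) gives
\[
  D^+(0,y;t)=\frac{X(y,t)\bigl(y-X(y,t)Y(t)\bigr)}{t\,\bigl(1-X(y,t)^2\bigr)\,(p_{-1,1}y^2+p_{-1})},
\]
with $X,Y$ as in~\eqref{eq:XYDyck}. Since N-excursions have even length, set $z=t^2$ and $\Phi(y,z):=D^+(0,y;\sqrt z)$, so that $\proba(X_n=k)=[y^{2k}z^n]\Phi(y,z)\big/[z^n]\Phi(1,z)$; equivalently $X_n$ is marked by $u=y^2$ in $\Phi(\sqrt u,z)$, and we obtain the limit of the probability generating function $\E(u^{X_n})$ by transfer.

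The heart of the argument is a description of the dominant singularity $\rho(y)$ of $z\mapsto\Phi(y,z)$. There are three candidates: the branch point $\rho_Y=(4p_{-1}(1-p_{-1}))^{-1}$ of $Y$, which does not depend on $y$; the branch point $\rho_X(y)=(4p_1(p_{-1}+p_{-1,1}y^2))^{-1}$ of $X$; and a simple pole at $\rho_{\mathrm p}(y)=y^2/(p_{-1}+(1-p_{-1})y^2)^2$, located where the principal root satisfies $X(y,t)=1$. The key observation is that this pole is \emph{removable} precisely when $(1-p_{-1})y^2\le p_{-1}$: from the defining equation $K(1,Y(t))=0$ one checks that then $Y$ takes the value $y$ at $t=\sqrt{\rho_{\mathrm p}(y)}$, so the numerator $y-X(y,t)Y(t)$ vanishes there as well. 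Comparing the three candidates with $1$ — which, via $\delta_x=2p_1-1$, $\delta_y=1-2p_{-1}$, amounts to comparing $p_{\pm1}$ with $\tfrac12$ — yields: for $\delta_y<0$ the pole is removable near $y=1$ and the dominant singularity is the \emph{fixed} branch point $\rho_Y<1$, of algebraic type $(1-z/\rho_Y)^{1/2}$; for $\delta_y>0,\ \delta_x\neq0$ it is a \emph{single moving} singularity of nonzero logarithmic $y$-velocity at $y=1$, namely the genuine pole $\rho_{\mathrm p}(y)$ if $\delta_x<0$ (exponent $-1$) and the branch point $\rho_X(y)$ if $\delta_x>0$ (exponent $+\tfrac12$); for $\delta_y=0,\ \delta_x\neq0$ the pole $\rho_{\mathrm p}(y)$ and the fixed branch point $\rho_Y$ \emph{coalesce} at $z=1$, the pole being stationary at $y=1$; and for $\delta_x=0,\ \delta_y\neq0$ the pole $\rho_{\mathrm p}(y)$ and the branch point $\rho_X(y)$ coalesce at $z=1$ with distinct nonzero velocities, so the nature of the dominant singularity changes as $y$ crosses $1$.

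Each regime is then finished by the matching limit-law schema of analytic combinatorics~\cite{FS09}. For $\delta_y<0$: a fixed algebraic singularity with analytic $y$-dependence gives a discrete limit law whose probability generating function is the ratio of the $(1-z/\rho_Y)^{1/2}$-coefficients at $z=\rho_Y$; Puiseux-expanding $Y(\sqrt z)=Y(r_1)-Y(r_1)\sqrt{1-z/\rho_Y}+\bigO(1-z/\rho_Y)$ there (with $r_1=\sqrt{\rho_Y}$) and substituting into $\Phi$ produces the stated $\omega(y)$, while the moment-pumping identity $\E(X_n)=\tfrac12\,[z^n]\partial_y\Phi(y,z)\big|_{y=1}\big/[z^n]\Phi(1,z)$ — again a ratio of $(1-z/\rho_Y)^{1/2}$-transfers — gives $\E(X_n)$ up to $\bigO(1/n)$. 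For $\delta_y>0,\ \delta_x\neq0$: a moving simple pole (resp.\ algebraic-$\tfrac{1}{2}$ singularity) of nonzero velocity satisfies the hypotheses of the meromorphic (resp.\ algebraic) quasi-powers theorem, so $X_n$ is asymptotically Gaussian with $\mu=-\tfrac{d}{du}\log\rho(\sqrt u)\big|_{u=1}$ and $\sigma^2$ the associated second-order quantity; evaluating at $\rho_{\mathrm p}$ and $\rho_X$ gives the two pairs $(\mu,\sigma^2)=\bigl(1-2p_{-1},\,p_{-1}(1-p_{-1})\bigr)$ and $\bigl(\tfrac{p_{-1,1}}{1-p_1},\,\tfrac{p_{-1,1}p_{-1}}{(1-p_1)^2}\bigr)$ respectively.

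The remaining two cases are confluent, and this is where the main difficulty lies: a simple pole colliding with an algebraic branch point falls outside the off-the-shelf quasi-powers theorem. For $\delta_y=0$ I would localise $\Phi$ near $(y,z)=(1,1)$, writing it as $A(y,z)/(z-\rho_{\mathrm p}(y))$ with the numerator $A$ carrying the $\sqrt{\rho_Y-z}$-branch of $Y$ and vanishing to order $\tfrac12$ on the diagonal, expand $\rho_{\mathrm p}(y)=1-(y-1)^2+\cdots$ (the pole being stationary at $y=1$), and perform the confluent transfer under the scaling $y=e^{s/\sqrt n}$; the two-scale balance between the blow-up of the residue (of order $(\rho_Y-\rho_{\mathrm p}(y))^{-1/2}\asymp|y-1|^{-1}$) and the exponential factor $\rho_{\mathrm p}(y)^{-n}$ — the same mechanism as in the meander asymptotics of Banderier--Flajolet~\cite{BaFl02} — produces $\tfrac{X_n}{\sqrt n}\inlaw\Rc(2)$. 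For $\delta_x=0$ the dominant singularity is $\rho_{\mathrm p}(y)$ for $y<1$ and $\rho_X(y)$ for $y>1$, so $\log\rho(\sqrt u)$ has a corner at $u=1$ superposed on a smooth quadratic part: the quadratic part contributes the bulk Gaussian $\Nc(\mu n,\sigma^2 n)$ and the corner an independent one-sided correction at scale $\sqrt n$, giving $X_n\approx\Nc(0,1)\ast\Rc^-(\lambda)$ with $\mu=1-2p_{-1}$, $\sigma^2=2p_{-1}(1-2p_{-1})$, $\lambda=(1-2p_{-1})/p_{-1}$, all read off from the second-order local expansions of $\rho_{\mathrm p}$ and $\rho_X$ at $y=1$; these routine expansions, the constant $\gamma$ of the earlier theorems, and the attendant algebraic-degree checks are carried out in the accompanying Maple worksheet.
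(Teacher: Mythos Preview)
Your proposal is correct and follows essentially the same route as the paper: start from the explicit closed form for $D^+(0,y;t)$, pass to $z=t^2$, identify the three candidate singularities (the branch point of $Y$, the branch point of $X$, and the pole at $X=1$), determine which one dominates in each drift regime, and apply the matching limit-law schema. Your condition $(1-p_{-1})y^2\le p_{-1}$ for removability of the pole is in fact more carefully stated than the corresponding remark in the paper.

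The one place where the two arguments genuinely diverge is the handling of the two confluent cases (items~2 and~3). The paper observes that near the coalescence the generating function admits the local form
\[
    F(u,z)=\frac{1}{g(u,z)+h(u,z)\sqrt{1-z}}
\]
with $g,h$ analytic, and then invokes Drmota--Soria~\cite{DrSo97} directly: their Theorem~1 yields the Rayleigh limit when $\rho_1$ and $\rho_3$ coalesce ($\delta_y=0$), and their Theorem~3 yields the $\Nc\ast\Rc^-$ convolution when $\rho_2$ and $\rho_3$ coalesce ($\delta_x=0$), each after a minor extension from slit planes to $\Delta$-domains. This is both short and rigorous. Your direct confluent-transfer sketch for the Rayleigh case captures the right mechanism, but the ``corner in $\log\rho$'' heuristic you propose for the $\Nc\ast\Rc^-$ case is not a proof: a corner in the exponential rate alone does not produce the convolution structure, and one still needs precisely the two-variable local expansion that Drmota--Soria packages. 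Replacing that paragraph with a citation to~\cite{DrSo97} would bring your argument to the same level of rigour as the paper's, with less work.
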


\begin{proof}
    By Theorem~\ref{th:other_Dyck_Nmeanders} the bivariate generating function of N-excursions with marked final reachable point is
    \begin{align*}
        D^+(0,y,t) &= \frac{X(y,t)}{1-X(y,t)^2} \frac{y-X(y,t)Y(t)}{p_{-1} t}.
    \end{align*}
    As a first step we define $F(u,z) = D^+(0,\sqrt{u},\sqrt{z})$ as all excursions are of even length end have their maximum therefore at even altitude.
    The possible singularities with respect to $t$ are given by the square-root singularities of $Y(\sqrt{z})$ and $X(\sqrt{u},\sqrt{z})$, as well as the polar singularity arising from the roots of $1-X(\sqrt{u},\sqrt{z})^2$. By~\eqref{eq:XYDyck} we get the following three respective candidates
    \begin{align}
        \rho_1 &= \frac{1}{4p_{-1}(1-p_{-1})}, \notag \\
        \rho_2(u) &= \frac{1}{4p_{1} (p_{-1} + (1-p_{1}-p_{-1})u)}, \label{eq:DyckNExcSing}\\
        \rho_3(u) &= \frac{u}{\left(p_{-1}+(1-p_{-1})u\right)^2}, \notag
    \end{align}
    where we have used that $p_{-1}+p_{1}+p_{-1,1}=1$ to eliminate $p_{-1,1}$.
    Note that $\rho_3(u)$ is only the singularity for $Re(u)>1$, as otherwise it is cancelled by a root in the numerator.
    Recall that for $u=1$ these singularities specialize to the ones in~\eqref{eq:DyckNExcSingat1}, which were the key to obtain Theorem~\ref{theo:dyckexcasymptitotics}.
    Now, we perform a more detailed analysis in the same spirit. 
    For this purpose, we distinguish five different cases depending on the drift vector; compare Figure~\ref{fig:DyckExcLawMax} and the accompanying Maple worksheet~\cite{gitlabproject}.
    
    \begin{enumerate}
        \item 
            In the case $\delta_y<0$ we have $p_{-1}>1/2$, and hence $p_{1}<1/2$. 
            An elementary computation shows that $\rho_1$ is dominant at $u=1$. Then, as $\rho_2(u)$ and $\rho_3(u)$ are continuous at $u=1$, we see that $\rho_1$ remains dominant in a small neighborhood.
            Therefore, the square-root singularity of $Y$ dominates.
            This smooth asymptotic behavior allows to extract the $n$th coefficient with respect to $z$ using real analysis asymptotics as in~\cite[Theorem~VI.12]{FS09}.
        %
        \item \label{item:Rayleigh}
            For $\delta_y=0$ and $\delta_x \neq 0$ we have $p_{-1}=1/2$ and $p_{1}<1/2$. 
            Then, at $u=1$ we see $\rho_1=\rho_3(1)=1$, while 
            $\rho_2(1)=1/(4p_{1}(1-p_{1})) >1$.
            Thus, the square-root singularity of $Y$ in the numerator and the polar singularity in the denominator coalesce leading to the following local expansion
            \begin{align}
                \label{eq:DrSoExpansion}
                F(u,z) = \frac{1}{g(u,z) + h(u,z)\sqrt{1-z}},
            \end{align}
            for $|u-1|<\varepsilon$, $|z-1|<\varepsilon$, $|\arg(z-1)|>3\pi/8$,
            where $0<\varepsilon<\sqrt{2}-1$ fixed, and $g(u,z)$ and $h(u,z)$ are analytic functions.
            Note that a better choice for the $\Delta$-domain is given by the angle $\phi = \arctan\left(\frac{1-\sqrt{1-2\varepsilon-\varepsilon^2}}{\varepsilon}\right)$ instead of $3\pi/8$.
            The analytic continuation is justified by the explicit structure of $X$ and $Y$.
            This situation is captured by a slight generalization of \cite[Theorem~1]{DrSo97} leading to a Rayleigh distribution with parameter $\lambda=2$.
            The generalization amounts to changing the slit plane given by $\arg(z-1) \neq 0$ to a $\Delta$-domain $|\arg(z-1)|>\phi>0$. 
            The proof stays exactly the same. 
        %
        \item
            We continue with $\delta_y>0$ and $\delta_x<0$, \ie $p_{-1}<1/2$ and $p_{1}<1/2$.
            In this case the polar singularity $\rho_3(u)$ dominates.
            Using Maple we get the local expansion for $z \sim \rho_3(u)$
            \begin{align*}
                F(u,z) &= \frac{((1-p_{-1})u-p_{-1}) ((p_{-1,1}-p_{1})u+p_{-1})}{((1-p_{-1})u+p_{-1})^2 (p_{-1,1}u+p_{-1}) (1-p_{-1})\rho_3(u)} \frac{1}{1-z/\rho_3(u)} + \bigO(1).
            \end{align*}
            Then, we may apply Hwang's Quasi-powers theorem; see~\cite[Theorem~IX.8]{FS09} or~\cite{H98}.
            Note that $B(u)=1/\rho_3(u)$ and hence satisfies the variability condition $B''(1)+B'(1)-B'(1)^2 = 2p_{-1}(1-p_{-1}) \neq 0$.
            Therefore, we get convergence to a normal distribution with the claimed parameters.
        %
        \item
            For $\delta_y>0$ and $\delta_x=0$ we have $p_{-1}<1/2$ and $p_{1}=1/2$. Then, at $u=1$ we see $\rho_2(1)=\rho_3(1)=1$ while $\rho_1>1$.
            Thus, like in case~\ref{item:Rayleigh} the square-root singularity of $Y$ in the numerator and the polar singularity in the denominator coalesce. 
            Again a local expansion of the form~\eqref{eq:DrSoExpansion} holds, yet this time it leads to the convolution of a Rayleigh and a normal distribution by a similar generalization as above of~\cite[Theorem~3]{DrSo97}.
            Note that the corresponding Rayleigh distribution is supported on the negative real axis.
        %
        \item
            Finally, we consider $\delta_y>0$ and $\delta_x>0$, \ie $p_{-1}<1/2$ and $p_{1}>1/2$.
            We deal with a square-root singularity arising from $X$ at $z=\rho_2(u)$.
            Applying Hwang's Quasi-powers theorem once more yields the final result of a normal distribution.
            We omit the straightforward computations due to their bulky structure. \qedhere
    \end{enumerate}
\end{proof}

\begin{figure}[ht]
	\centering
    \includegraphics[width=0.35\textwidth]{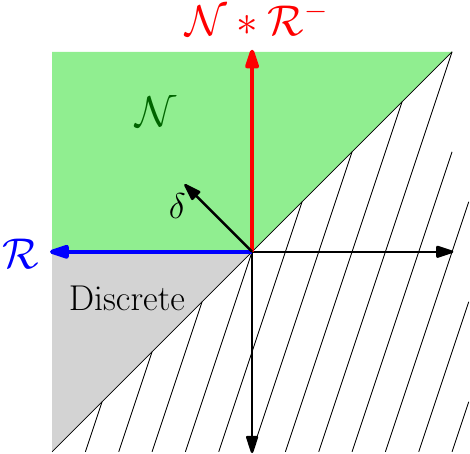}
    \caption{The law of the final maximal point of an N-excursion from Theorem~\ref{th:LawDyckExcMax} depends on the drift vector $\delta=(\delta_x,\delta_y)$ from~\eqref{eq:DyckDriftVector}. The limit law is either a discrete, normal $\Nc$, Rayleigh $\Rc$, or the convolution $\Nc * \Rc^-$ of a normal and a Rayleigh distribution with negative support. Above we see the drift vector $\delta=(-1/3,1/3)$ for the equidistributed case, \ie $S(x,y) = \frac{xy}{3} + \frac{y}{3x} + \frac{1}{3xy}$.}
   \label{fig:DyckExcLawMax}
\end{figure}

\begin{remark}[Phase transitions of the limit law of the maximum reachable point]    
The previous proof shows, that every sector in Figure~\ref{fig:DyckExcLawMax} corresponds to a different dominant singularity from~\eqref{eq:DyckNExcSing}. 
Starting with a drift vector corresponding to the discrete case and continuing in positive direction, we see the following behavior:
First, the dominant singularity is of square-root type arising from~$\rho_1$; then it coalesces with $\rho_3(u)$ creating a different square-root singularity giving a Rayleigh distribution; then it continues as a polar singularity arising from $\rho_3(u)$ giving a normal distribution; which then coalesces with $\rho_2(u)$ to form the convolution of a Rayleigh and a normal distribution; finally, it is again a square-root singularity arising from $\rho_2(u)$ giving a normal distribution.
\end{remark}

\begin{remark}[Non-degeneracy of N-meanders]
    The restriction $p_{-1,1}\neq0$ ensures the nondeterministic character of the walks. If $p_{-1,1}=0$ these are classical excursions where the set of reachable points consists of the single end point.
\end{remark}

\begin{remark}[Limit law of lattice paths in the quarter plane with reflecting/absorbing boundaries]
    The result of Theorem \ref{th:LawDyckExcMax} can also be interpreted in terms of two-dimensional lattice paths defined in Figure~\ref{fig:DyckReflectionAbsorption}. 
    It gives the limit law of the $y$-coordinate of walks ending on the nonnegative $y$-axis that are confined to the quarter plane and have a reflecting $y$-axis and an absorbing $x$-axis (or, for this step set, absorbing origin).
    
    Let us compare it to the model with an absorbing $y$-axis instead, \ie the step set at the $y$-axis is always $p_1 u$.
    The problem is easier in that case, and we invite the reader to solve this problem as a short exercise in the kernel method; see Section~\ref{sec:introkernel}.  
    The resulting closed form shows that the limit law follows a binomial distribution independent of the drift.
    Therefore, it converges in all cases after standardization to a normal distribution. 
\end{remark}

Next we consider the number of returns to zero of a random N-excursion.
Let us define a \emph{zero} of an N-excursion as a time where the set of reachable points is equal to the initial set: $\{0\}$.
Observe that the generating function of Dyck N-excursions that end at a zero is $D^+(0, 0; t)$.
Those N-excursions have a unique decomposition as a sequence of Dyck N-excursions with zeros only at the beginning and the end.
Denoting by $A(t)$ the generating function of those last N-excursions, we deduce
\[
    D^+(0, 0; t)
    =
    \frac{1}{1 - A(t)}
    \qquad \text{and} \qquad
    A(t) = 1 - \frac{1}{D^+(0, 0; t)}.
\]
Let $B(t)$ denote the generating function of Dyck N-excursions where the only zero is at the beginning.
Any Dyck N-excursion has a unique decomposition as a Dyck N-excursion ending at a zero, followed by a (possibly empty) Dyck N-excursion where the only zero is at the beginning, so
\[
    D^+(0, 1; t) =
    D^+(0, 0; t) B(t)
    \qquad \text{and} \qquad
    B(t) = \frac{D^+(0, 1; t)}{D^+(0, 0; t)}.
\]
Let $F(u,t)$ be the bivariate generating function of Dyck N-excursions, where $t$ marks the length and $u$ the number of returns to zero. 
It decomposes as a sequence of N-excursions with zeros only at the beginning and the end, followed by an N-excursion with a zero only at the beginning, so
\[
    F(u,t)
    =
    \frac{1}{1 - u A(t)}
    B(t).
\]
Injecting the expressions of $A(t)$ and $B(t)$ from above, we deduce
\begin{align}
    \label{eq:DyckReturnsBGF}
    F(u,t) &= \frac{1}{1-u \left(1 - \frac{1}{D^+(0,0;t)}\right)} \frac{D^+(0,1;t)}{D^+(0,0;t)}.
\end{align}

We conclude this section with the limit law of the returns to zero of Dyck N-excursions drawn uniformly at random.
In terms of the two-dimensional lattice path defined in Figure~\ref{fig:DyckReflectionAbsorption}, this corresponds to the law of the number of visits of the origin of a path ending on the $y$-axis.
The law is always discrete; see also Figure~\ref{fig:lawsFinalandReturns} for some explicit simulations matching the theoretical results.

\begin{theorem}
    \label{th:LawDyckExcReturns}
   Let $Y_n$ be the random variable of the distribution of the number of returns to zero in an N-excursion of length $2n$ drawn uniformly at random:
    \begin{align*}
        \proba\left(Y_n = k\right) &= \frac{[t^{2n} u^{k}] F(u,t)}{[t^{2n}]D^+(0,1;t)}.
    \end{align*}
    Then, $Y_n$ admits a discrete limit distribution of geometric, negative binomial, or mixed type that is given by
    \begin{align*}
        \proba\left(Y_n = k\right) &= 
             \begin{cases}
                %
                (1-p_{-1}) p_{-1}^k & \text{ if } 0 \leq p_{-1} < \frac{1}{2} \text{ and } 0 \leq p_{1} \leq \frac{1}{2},\\
                \frac{1}{2^{k+1}} & \text{ if } 0 \leq p_{1} < \frac{1}{2} \text{ and } p_{-1} = \frac{1}{2},\\
                %
                \frac{k}{2^{k+1}} & \text{ if } p_{1} + p_{-1} = 1,\\
                %
                \frac{1}{D^+(0,0;\rho_2)} \left( 1 - \frac{1}{D^+(0,0;\rho_2)} \right)^k & \text{ if } 0 \leq p_{-1} < \frac{1}{2} < p_{1} < 1 \text{ and } p_{-1} + p_{1} < 1,\\
                %
                (1-\eta)\frac{1}{2^{k+1}} + \eta\frac{k}{2^{k+1}} & \text{ if } 0 \leq p_{1} < \frac{1}{2} < p_{-1} < 1 \text{ and } p_{-1} + p_{1} < 1,
            \end{cases}
    \end{align*}
    where $\rho_2 = \frac{1}{4p_{1}(1-p_{1})}$ and $\eta = \frac{p_{-1}(p_{-1}-p_{1})-\sqrt{p_{-1}(1-p_{-1})(1-p_{1}-p_{-1})(p_{-1}-p_{1})}}{p_{-1}(1-p_{1})} \in [0,1]$.
\end{theorem}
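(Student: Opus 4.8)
The plan is to read off the coefficient asymptotics of the bivariate generating function \eqref{eq:DyckReturnsBGF} by singularity analysis. First I would set
\[
  P(t) := 1 - \frac{1}{D^+(0,0;t)}, \qquad \psi(t) := \frac{D^+(0,1;t)}{D^+(0,0;t)},
\]
so that \eqref{eq:DyckReturnsBGF} becomes $F(u,t) = \psi(t)/(1-uP(t))$, and observe that $P$ and $\psi$ have nonnegative coefficients: $D^+(0,0;t) = 1/(1-P(t))$ with $P$ the generating function of the \emph{primitive} N-excursions (reachable set equal to $\{0\}$ only at the end), and $\psi$ the generating function of N-excursions with no return to zero — this is exactly the combinatorial content behind \eqref{eq:DyckReturnsBGF}. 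Then $[u^k]F(u,t) = P(t)^k\psi(t)$ enumerates N-excursions with exactly $k$ returns to zero, and since every N-excursion has even length and $F(1,t) = D^+(0,1;t)$, setting $z = t^2$ we get
\[
  \proba(Y_n = k) = \frac{[z^n]\,P(\sqrt z)^k\,\psi(\sqrt z)}{[z^n]\,D^+(0,1;\sqrt z)},
\]
so everything reduces to the dominant singularities of numerator and denominator.

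Next I would invoke the singularity landscape already set up in the proofs of Theorems~\ref{theo:dyckexcasymptitotics} and~\ref{th:LawDyckExcMax}: the dominant singularity of $z\mapsto D^+(0,1;\sqrt z)$ is one of $\rho_1 = \frac1{4p_{-1}(1-p_{-1})}$, $\rho_2 = \frac1{4p_1(1-p_1)}$, or $1$, and its type varies across the regimes (simple pole at $z=1$ when $p_{-1},p_1<\frac12$; a $(1-z)^{-1/2}$ blow-up when exactly one of them equals $\frac12$; a square-root branch point at $\rho_1$ resp.\ $\rho_2$ when the corresponding weight exceeds $\frac12$). By \eqref{eq:XYDyck}--\eqref{eq:DyckD00}, $z\mapsto D^+(0,0;\sqrt z) = Y(\sqrt z)/(p_{-1}\sqrt z)$ has its unique singularity at $\rho_1$, of square-root type, with boundary value $D^+(0,0;\sqrt{\rho_1}) = 2$ and with $D^+(0,0;1) = \frac1{1-p_{-1}}$ whenever $p_{-1}<\frac12$; since $D^+(0,0)\geq 1$ on its disk of convergence, $P$ is even in $t$ with $0\leq P(\sqrt z)\leq\frac12$ there, so $1-uP(\sqrt z)\neq 0$ for all $|u|\leq1$. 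Hence the dominant singularity of $z\mapsto F(u,z)$ equals that of $\psi(\sqrt z)$ and does not move with $u$ — exactly the configuration giving a \emph{discrete} limit law.

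The proof then splits by regime. When the dominant singularity $\rho^\ast$ of $D^+(0,1;\sqrt z)$ is a simple pole or a $(1-z)^{-1/2}$ blow-up and $D^+(0,0;\sqrt z)$ is analytic and nonzero there (cases~1, 2, 5, and also case~3 where $\rho^\ast=\rho_1=1$ but the branch correction of $D^+(0,0)$ is of lower order), the factor $P(\sqrt z)^k$ only contributes its value $P(\sqrt{\rho^\ast})^k$ to the leading term, and the ratio of leading coefficients yields a geometric limit law of success probability $1-P(\sqrt{\rho^\ast}) = 1/D^+(0,0;\sqrt{\rho^\ast})$; substituting $D^+(0,0;1)=\frac1{1-p_{-1}}$, $D^+(0,0;\sqrt{\rho_1})=2$, and $D^+(0,0;\sqrt{\rho_2})$ for case~5 gives the stated formulas. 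When $p_{-1}>\frac12$, so $\rho^\ast=\rho_1$ is a square-root branch point that also belongs to $D^+(0,0;\sqrt z)$ (cases~4 and~6; in case~4 moreover $p_{-1,1}=0$, $\rho_2=\rho_1$), I would write $s=\sqrt{1-z/\rho_1}$, use $D^+(0,0;\sqrt z) = 2 + e_1 s + \bigO(s^2)$ and $D^+(0,1;\sqrt z) = \gamma_0 + \gamma_1 s + \bigO(s^2)$ with $\gamma_1 = -2\sqrt\pi\,\gamma$ (the constant $\gamma$ of Theorem~\ref{theo:dyckexcasymptitotics}), expand $F(u,z) = A(u,z) + Q(u,z)\,s$ with $A,Q$ analytic at $\rho_1$, and compute
\[
  Q(u,\rho_1) = \frac{2\gamma_1-\gamma_0 e_1}{4(1-u/2)} + \frac{u\,e_1\gamma_0}{8(1-u/2)^2};
\]
the transfer theorem for square-root singularities applied to numerator and denominator then gives $\proba(Y_n=k)\to[u^k]\,Q(u,\rho_1)/\gamma_1$, a convex combination of a geometric law of parameter $\frac12$ and a negative-binomial-type law governed by $\beta = \gamma_0 e_1/\gamma_1$. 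In case~4 one checks $e_1=\gamma_1=-2$, $\gamma_0=2$, hence $\beta=2$ and the pure law $k\,2^{-(k+1)}$; in case~6 one inserts the explicit $\gamma_0,\gamma_1,e_1$ as functions of $p_{-1},p_1$.

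The main obstacle is exactly this last, doubly branched regime (case~6): one has to carry the $\sqrt{1-z/\rho_1}$-expansions through the quotient $D^+(0,1)/D^+(0,0)$ and the geometric series $1/(1-uP)$ without losing a term, and then reduce the resulting algebraic constant to a clean closed form — this is where the precise limit probabilities, and the value of $\eta$, must be determined (and, as the marginal note in the statement signals, verified). In all cases the $\bigO(1/n)$ remainders are inherited from the quantitative transfer theorems of \cite{FS09}.
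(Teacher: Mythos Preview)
Your approach is essentially the same as the paper's: extract $[u^k]F(u,t)=P(t)^k\psi(t)$ from \eqref{eq:DyckReturnsBGF} and then perform singularity analysis, combining the Puiseux expansions of $D^+(0,1;t)$ from \eqref{eq:DyckExcSingExp} with that of $D^+(0,0;t)=Y(t)/(p_{-1}t)$ from \eqref{eq:XYDyck}--\eqref{eq:DyckD00}. The paper's own proof is in fact much terser---it simply states that this combination is ``straightforward'' and points to \cite[Theorem~5]{BaFl02} for the negative-binomial case $p_1+p_{-1}=1$---so you have worked out considerably more detail (the boundary values $D^+(0,0;1)=1/(1-p_{-1})$ and $D^+(0,0;\sqrt{\rho_1})=2$, the reason the singularity does not move with $u$, and the two-term $\sqrt{1-z/\rho_1}$ expansion for the doubly branched regime) than the paper actually displays.
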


\begin{proof}
Extracting the $k$th coefficient of $u$ in~\eqref{eq:DyckReturnsBGF} gives
\begin{align*}
    \left(1 - \frac{1}{D^+(0,0;t)}\right)^k  \frac{D^+(0,1;t)}{D^+(0,0;t)}.
\end{align*}
Then, it is tedious but straightforward to use singularity analysis (see Section~\ref{sec:introsingana}) to derive the asymptotic expansions. Therefore, we combine the singular expansions of $D^+(0,1;t)$ from~\eqref{eq:DyckExcSingExp} and of $D^+(0,0;t)$ derived from~\eqref{eq:XYDyck} and \eqref{eq:DyckD00}.
Note that the cases leading to a negative binomial distribution when $p_{1}+p_{-1}=1$ also follow from~\cite[Theorem~5]{BaFl02}.
\end{proof}

\begin{figure*}[ht!]
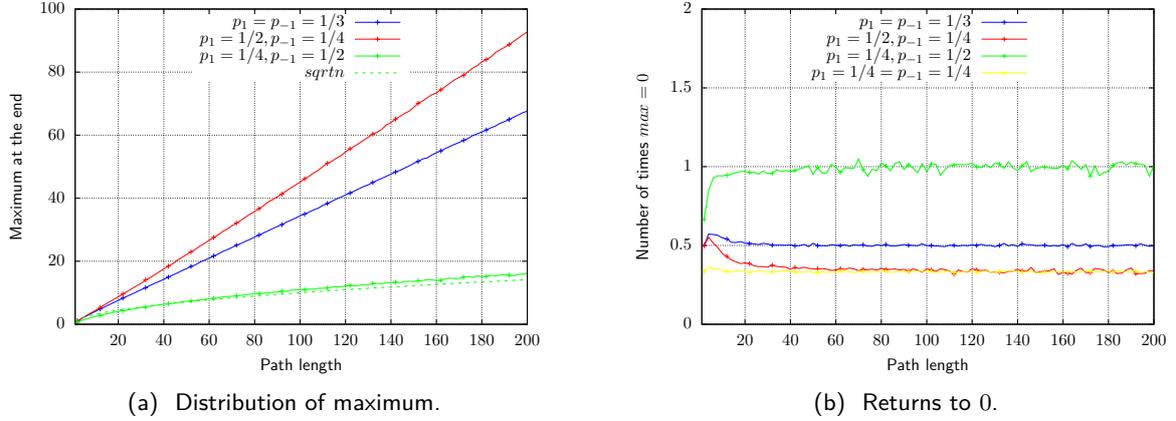

	\centering
	\subfloat[ Distribution of maximum.]{%
    \resizebox{0.47\textwidth}{!}{\input{Figures/Average_max_at_end.tex}}		}
        \hfill
	\subfloat[ Returns to $0$.]{%
    \resizebox{0.47\textwidth}{!}{\input{Figures/number_of_max_to_0.tex}}}
    \caption{Maximum point reached at the end according to path length (left) and returns to $0$ (right) of Dyck N-excursions (averaged simulation over $10^5$ runs).}
	\label{fig:lawsFinalandReturns} 
\end{figure*}

\medskip

After this detailed discussion of nondeterministic walks derived from Dyck paths, we turn to the probably next most classical lattice paths: Motzkin paths.

		\section{Motzkin N-walks}
        \label{sec:MotzkinNWalks}

The step set of classical Motzkin paths is $\{-1,0,1\}$.
The  N-step set of all non-empty subsets is
\[
	S_M = \Big\{ \{-1\}, \{0\}, \{1\}, \{-1,0\}, \{-1,1\}, \{0,1\}, \{-1,0,1\} \Big\},
\]
and we call the corresponding N-walks (N-meanders, N-excursions, N-brides) \emph{Motzkin N-walks} (\emph{Motzkin N-meanders}, \emph{Motzkin N-excursions}, \emph{Motzkin N-bridges}).
As before we associate with every step $\vs \in S_M$ a weight $p_{\vs}$, denoted by $p_{-1}$, $p_{0}$, $p_{1}$, $p_{-1,0}$, $p_{-1,1}$, $p_{0,1}$, and $p_{-1,0,1}$, respectively.
Recall that the weight of a path, is the product of its weights.
In Appendix~\ref{sec:SubclassesNMotzkin} we give conjectural connections to counting sequences in the OEIS for special choices of weights in N-excursions and N-meanders.

To simplify the subsequent discussion, we introduce the following notation.
Given an N-walk $w = (\vw_1, \ldots, \vw_n)$ and an N-step $\vs$, we denote by $w \cdot \vs$ the N-walk that is obtained by adding the N-step $\vs$ at the end of the N-walk $w$, \ie
\[
    w \cdot \vs
    =
    (\vw_1, \ldots, \vw_n, \vs).
\]

\subsection{Motzkin N-walks and N-bridges}

As for Dyck N-walks, we start with a description of the reachable points of Motzkin N-walks.
Here, we need to distinguish two cases.
From now on we will use the shorthand $\|w \| := \max(w) - \min(w)$.
\begin{definition}[Types of reachable points]
\label{def:MotzkinReachablePoints}
A Motzkin N-walk $w$ is 
\begin{itemize}
  \item of \emph{type $\romI$} if $\reach(w) = \left\{\min(w) + i ~:~ i=0, 1, \dots, \|w \|\right\}$ and $\|w \| \geq 1$.
  \item of \emph{type $\romII$} if $\reach(w) = \left\{ \min(w) + 2i ~:~ i=0,1,\dots, \frac{\|w \|}{2} \right\}$,  
\end{itemize}    
\end{definition}

In other words, the sets of reachable points are either $1$- or $2$-periodic finite subsets of integers. 
The condition $\| w \| \geq 1$ guarantees that the types are disjoint.
For Dyck N-walks we did not need to distinguish different types, as they were always of type $\romII$.
Now, the following proposition will explain how these two types suffice to characterize the structure of Motzkin N-walks.

\begin{proposition} \label{th:motzkin_structure}
A Motzkin N-walk $w$ is of type $\romII$ if and only if
it is constructed only from the N-steps  $\{-1\}$, $\{0\}$, $\{1\}$, and $\{-1,1\}$.
Otherwise, it is of type $\romI$; see Figure~\ref{fig:motzkin_walks_structure}.
\end{proposition}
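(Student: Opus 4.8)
The plan is to prove the equivalence---and simultaneously the fact that every Motzkin N-walk is of one of the two types---by induction on the length $n=|w|$, following how the reachable set $\reach(w)$ changes when a single N-step is appended. Write $T=\{\{-1\},\{0\},\{1\},\{-1,1\}\}$ for the four ``type-$\romII$ candidate'' N-steps and $B=S_M\setminus T=\{\{-1,0\},\{0,1\},\{-1,0,1\}\}$ for the remaining three. The induction hypothesis for walks of length $<n$ reads: such a walk is of type $\romI$ or of type $\romII$, and it is of type $\romII$ if and only if all of its N-steps belong to $T$; the two types are disjoint because type $\romI$ requires $\|w\|\ge1$, so a span-$0$ (singleton) reachable set can only be type $\romII$.

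Next I would record the one-step update rule: if $w=(w_1,\dots,w_{n-1},s)$ and $R'=\reach((w_1,\dots,w_{n-1}))$, then $R:=\reach(w)=\bigcup_{v\in s}(R'+v)$. The elementary claim to check is twofold. (i)~If $R'$ is an arithmetic progression of common difference $2$ (a singleton allowed as the degenerate case), then $R'+s$ is again such a progression for every $s\in T$, while $R'+s$ is a block of consecutive integers of span $\ge1$ for every $s\in B$. (ii)~If $R'$ is a block of consecutive integers of span $\ge1$, then $R'+s$ is again such a block of span $\ge1$ for every $s\in S_M$. Both are a short case analysis on the at most three translates $R'-1$, $R'$, $R'+1$ and the way their union fills in; the crucial point in (i) is that adding any of $\{-1,0\},\{0,1\},\{-1,0,1\}$ to a $2$-progression interleaves its two residue classes into a full interval.

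Granting this, the induction is immediate. For $n=0$ we have $\reach(w)=\{0\}$, which is of type $\romII$ with $\|w\|=0$, and the empty word uses only steps of $T$, so both sides of the equivalence hold. For $n\ge1$ write $w=(w',s)$ with $w'$ of length $n-1$. If $w'$ uses only steps of $T$, then by induction $R'$ has the type-$\romII$ shape, and part~(i) gives: for $s\in T$ the walk $w$ uses only $T$-steps and $R$ keeps the type-$\romII$ shape, so $w$ is of type $\romII$; for $s\in B$ the walk uses a step of $B$ and $R$ becomes an interval of span $\ge1$, so $w$ is of type $\romI$. If instead $w'$ already uses some step of $B$, then by induction $R'$ is an interval of span $\ge1$, so by part~(ii) $R$ is too, for any $s$, and $w$ (still using a step of $B$) is of type $\romI$. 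In each case ``$w$ is of type $\romII$'' coincides with ``every N-step of $w$ lies in $T$'', which is the proposition.

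The only subtle bookkeeping concerns the degenerate boundary: a singleton reachable set has span $0$ and must be recorded as type $\romII$ and never as type $\romI$, which is exactly what the clause $\|w\|\ge1$ in Definition~\ref{def:MotzkinReachablePoints} ensures; and one must verify, as the smallest case of (ii), that appending $\{-1,1\}$ to a span-$1$ interval $\{a,a+1\}$ produces the interval $\{a-1,a,a+1,a+2\}$ rather than a set with a gap. Past these small checks there is no real obstacle; I would accompany part~(i) of the update rule with the picture referenced as Figure~\ref{fig:motzkin_walks_structure}.
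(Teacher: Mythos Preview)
Your proof is correct and follows essentially the same approach as the paper: induction on the length of the N-walk, with a case analysis on the current type of the reachable set and the appended N-step. The paper organizes the cases slightly differently (singleton N-steps first, then type~$\romI$ plus any step, then type~$\romII$ plus $\{-1,1\}$ via Lemma~\ref{lem:dyck_reachable_points}, then type~$\romII$ plus a step from your set $B$), but the content is the same; your explicit partition into $T$ and $B$ and the two update claims (i) and (ii) amount to the same verification.
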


\begin{proof}
We use induction on the number of N-steps.
If the Motzkin N-walk is empty, its set of reachable points
is $\{0\}$, which is of type $\romII$ by definition.
Now consider a Motzkin N-walk $w$
of reachable points $\vr$,
a Motzkin N-step $\vs$,
and denote the set of reachable points of $w \cdot \vs$ by $\vr'$.
Let us assume the proposition holds for $w$
and prove it for $w \cdot \vs$.

If $\vs$ has size $1$, like $\{-1\}$, $\{0\}$ or $\{1\}$,
$\vr'$ is just a translation of $\vr$,
so the type is unchanged.

If $\vr$ is of type $\romI$,
it is an integer interval of size $\ell \geq 2$,
so there exists $m \in \integers$ such that
$\vr = [m, m + \ell]$.
Then for any Motzkin N-step $\vs$,
$\vr'$ is also an integer interval,
of size $\ell + \max(\vs) - \min(\vs)$.

If $\vr$ is of type $\romII$
and $\vs = \{-1, 1\}$, then we saw in Lemma~\ref{lem:dyck_reachable_points}
that $\vr'$ has type $\romII$.
The last case is when $\vr$ has type $\romII$,
say $\vr = \{m, m + 2, \ldots, m + 2 \ell\}$,
and $\vs$ is equal to $\{-1,0\}$, $\{0,1\}$, or $\{-1,0,1\}$.
The value of $\vr'$, in each case,
is $[m-1, m + 2\ell]$, $[m, m + 2\ell+1]$ or $[m-1, m + 2\ell + 1]$.
It is always an interval of length at least $2$,
so it is of type $\romI$.
\end{proof}

\begin{figure}
\begin{center}
\includegraphics[scale=0.42]{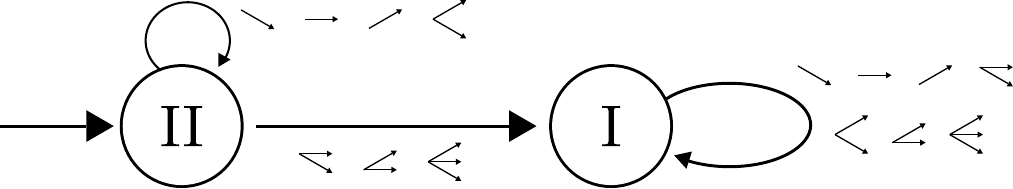}
\caption{The automaton representing the structure of reachable points of Motzkin N-walks.}
\label{fig:motzkin_walks_structure}
\end{center}
\end{figure}

The set of Motzkin N-walks of type $\romI$ (\resp $\romII$) is denoted by $M_{\romI}$ (\resp $M_{\romII}$),
and their generating functions are defined as
\begin{align*}
	M_{\romI}(x,y; t) &= \sum_{w \in M_{\romI}} \bigg( \prod_{\vs \in w} p_{\vs} \bigg) x^{\min(w)} y^{\max(w)} t^{|w|},\\
	M_{\romII}(x,y; t) &= \sum_{w \in M_{\romII}} \bigg( \prod_{\vs \in w} p_{\vs} \bigg) x^{\min(w)} y^{\max(w)} t^{|w|}.
\end{align*}
The generating function for all Motzkin N-walks is defined as $M(x,y;t) = M_{\romI}(x,y; t) + M_{\romII}(x,y; t)$.

\begin{theorem}
    The generating functions of Motzkin N-walks of type $\romI$ and $\romII$, as well as for all Motzkin N-walks are rational.
\end{theorem}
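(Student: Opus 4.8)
The plan is to exhibit each of the three generating functions as an explicit rational function. The whole argument rests on the additivity of the extreme reachable points under concatenation, already recorded in Remark~\ref{rem:NWalksGeneric}: since $\reach(w\cdot s) = \{r+v : r \in \reach(w),\, v \in s\}$, one has $\min(w\cdot s) = \min(w)+\min(s)$ and $\max(w\cdot s) = \max(w)+\max(s)$. Hence, for \emph{any} N-step set $T$, the class of (unconstrained) N-walks over $T$ is a free monoid on $T$ on which $\min$ and $\max$ are additive, so its generating function in $x$ (marking $\min$), $y$ (marking $\max$) and $t$ (marking length) is
\begin{align*}
  \frac{1}{1 - t\sum_{s \in T} x^{\min(s)} y^{\max(s)}},
\end{align*}
which is a rational function of $x,y,t$ (multiply numerator and denominator by $xy$ to clear the Laurent monomials into honest polynomials).

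I would apply this twice. First, to the full Motzkin N-step set $S_M$: reading off the $\min/\max$ monomials of its seven N-steps (with $\{-1,1\}$ and $\{-1,0,1\}$ both contributing $x^{-1}y$) gives $M(x,y;t) = \bigl(1 - t(\frac{1}{xy} + 1 + xy + \frac1x + \frac{2y}{x} + y)\bigr)^{-1}$, hence rational. Second, by Proposition~\ref{th:motzkin_structure} the type $\romII$ Motzkin N-walks are \emph{exactly} the N-walks built from the four N-steps $\{-1\},\{0\},\{1\},\{-1,1\}$, with no further constraint; applying the same sequence construction to this four-element step set yields $M_{\romII}(x,y;t) = \bigl(1 - t(\frac{1}{xy} + 1 + xy + \frac{y}{x})\bigr)^{-1}$, again rational. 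Finally, $M_{\romI} = M - M_{\romII}$ is rational as a difference of rational functions. Alternatively, one can obtain $M_{\romI}$ directly from the unambiguous decomposition ``maximal type $\romII$ prefix, then one of the big N-steps $\{-1,0\},\{0,1\},\{-1,0,1\}$, then an arbitrary Motzkin N-walk'' — legitimate because type $\romI$ is absorbing by the proof of Proposition~\ref{th:motzkin_structure} — which gives $M_{\romI} = M_{\romII}\cdot t\frac{1+y+xy}{x}\cdot M$.

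The only point requiring care is the passage from Proposition~\ref{th:motzkin_structure} to a free-monoid description: one must note that restricting to the four small N-steps imposes no side condition at all — every finite sequence of these steps is a valid type $\romII$ Motzkin N-walk, and conversely every type $\romII$ walk arises this way — so that the sequence construction of the symbolic method applies verbatim. Everything else (bookkeeping of $\min$ and $\max$ as Laurent monomials in $x,y$, and clearing the denominator $xy$ to see genuine rationality) is routine, and no real obstacle remains once Proposition~\ref{th:motzkin_structure} is in hand.
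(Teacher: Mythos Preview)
Your proof is correct and essentially the same as the paper's. Both rest on Proposition~\ref{th:motzkin_structure} and the additivity of $\min$ and $\max$ under concatenation; the paper writes the resulting last-step recursion as a small linear system and solves it, while you invoke the free-monoid/sequence construction directly for $M$ and $M_{\romII}$ and then subtract. Your alternative first-big-step decomposition for $M_{\romI}$ is exactly what one obtains by solving the paper's system, so the two presentations agree.
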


\begin{proof}    
    This result is a direct consequence of the analysis of the interactions of types and N-steps in the proof of Proposition~\ref{th:motzkin_structure}; see Figure~\ref{fig:motzkin_walks_structure}.
    First we translate the changes of minimum and maximum reachable points for each type into into min-max-change polynomials as introduced in Remark~\ref{rem:NWalksGeneric}:
    \begin{align*}
        S_{\romI}(x,y) &= \frac{p_{-1}}{x y} + p_{0} + p_{1} x y + \frac{p_{-1,0}}{x} + p_{0,1}y + (p_{-1,1} + p_{-1,0,1}) \frac{y}{x}, \\
        S_{\romII}(x,y) &= \frac{p_{-1}}{xy} + p_{0} + p_{1} xy + p_{-1,1}\frac{y}{x}.
    \end{align*}
    This directly gives the following system of equations
    \begin{align*}
        M_{\romI}(x,y;t) &=
        t \left( S_{\romI}(x,y) - S_{\romII}(x,y) \right) M_{\romII}(x,y;t) 
        + t S_{\romI}(x,y) M_{\romI}(x,y;t),
    \\
        M_{\romII}(x,y;t) &=        
        1 + t S_{\romII}(x,y) M_{\romII}(x,y;t),
    \end{align*}
    from which we get the following explicit rational solutions:
    \begin{align*}
        M_{\romI}(x,y;t) &= \frac{1}{1-t S_{\romI}(x,y) } - \frac{1}{1-t S_{\romII}(x,y) },\\
        M_{\romII}(x,y;t) &= \frac{1}{1-t S_{\romII}(x,y) }.
    \end{align*}
    Alternatively, these formulas have a direct combinatorial derivation: 
    N-walks of type $\romII$ are sequences of N-steps encoded by $S_{\romII}$. 
    N-walks of type $\romI$ are sequences of N-steps encoded by $S_{\romI}$, except for those that consist solely of N-steps encoded by $S_{\romII}$.    
    Hence, both generating function are rational and therefore also $M(x,y,t) = \frac{1}{1-t S_{\romI}(x,y) }$.
\end{proof}

\begin{theorem}
    \label{theo:MotzkinNBridges}
    The generating function $B_M(x,y,t)$ of Motzkin N-bridges is algebraic of degree~$16$.
    For all weights equal to one the generating function $B_M(1,1,t)$ of Motzkin N-bridges is algebraic of degree $4$:
    Let $\widetilde{B}_m := ( 1 - 6t )  ( 1-2t-7\,{t}^{2} ) \left(( 1+2t) ( 1-4t ) ( 1-7t ) {{B_M(1,1,t)}} \right)^{2}$ then
    \begin{align*}
        \widetilde{B}_m^2 &+2\, 
        \left( 3878\,{t}^{7}-1489\,{t}^{6}-892\,{t}^{5}+402
        \,{t}^{4}+116\,{t}^{3}-86\,{t}^{2}+16\,t-1 \right) \widetilde{B}_m\\
        &+ \left( 3290\,{t}^{7}-47\,{t}^{6}-2052\,{t}^{5}+750\,{t}^
        {4}+72\,{t}^{3}-84\,{t}^{2}+16\,t-1 \right) ^{2}
        =0.    
    \end{align*}
    The number $[t^n] B_M(1,1,t)$ of unweighted Motzkin N-bridges is asymptotically equal to
    \begin{align*}
    	7^n - \sqrt{\frac{3}{\pi}} \frac{6^{n}}{\sqrt{n}} + \bigO\left(\frac{6^{n}}{n^{3/2}}\right) .
    \end{align*}
\end{theorem}

\begin{proof}
An N-bridge $w$ of type $\romII$ is an $M_{\romII}$ N-walk that satisfies $\min(w) \leq 0$, $\max(w) \geq 0$, and $\min(w)$ even.
An N-bridge $w$ of type $\romI$ is an $M_{\romI}$ N-walk that satisfies $\min(w) \leq 0$ and $\max(w) \geq 0$.
Thus, the generating function of Motzkin N-bridges is equal to
\[
    [x^{\leq 0} y^{\geq 0}] \left(\frac{M_{\romII}(x,y; t) + M_{\romII}(-x,y; t)}{2} + M_{\romI}(x,y; t)
    \right).
\]
Since the generating functions of $M_{\romI}$ and $M_{\romII}$ are rational, the generating function of N-bridges is D-finite; see~\cite[Proposition~1]{BM10} and \cite{Lipshitz88}. 
Yet the generating function is even algebraic, which can be proved similarly to as done in the proof of Theorem~\ref{theo:DyckNBridges}.
The main observation is that, as in~\eqref{eq:NDyckBridgesInterpretation}, the simultaneous extraction of nonnegative and nonpositive parts can be rewritten into two single extractions from a rational generating function, which are therefore both algebraic.

The explicit computations, also for general degree are given in the Maple worksheet~\cite{gitlabproject}. 
We show that $B_M(x,y,t)$ lives in the algebraic extension of the base field $\QQ(p_1, \dots, p_{-1,0,1},x,y,t)$ given by the quadratic functions $X(y,t)$, $Y(x,t)$, $\widetilde{X}(y,t)$, and $\widetilde{Y}(x,t)$ that satisfy $1-tS_{\romI}(X,y)=0$, $1-tS_{\romI}(x,Y)=0$, $1-tS_{\romII}(\widetilde{X},y)=0$, and $1-tS_{\romII}(x,\widetilde{Y})=0$, respectively.
From these closed forms it is then straightforward to compute the claimed asymptotics
for all weights set equal to one using singularity analysis (see Section~\ref{sec:introsingana}). 
For this purpose, note that for all weights equal to one and $x=y=1$, it holds that $X(1,t) = 2 \, Y(1,t)$ and $\widetilde{X}(1,t) = 4 \, \widetilde{Y}(1,t)$.
\end{proof}

As the total number of Motzkin N-walks of length $n$ is $7^n$, the previous results shows that nearly all N-walks are N-bridges, as the quotient behaves like
\[
	1 - \sqrt{\frac{3}{\pi}} \frac{(6/7)^n}{\sqrt{n}} + \bigO\left(\frac{(6/7)^n}{n^{3/2}}\right).
\]

We now turn to the analysis of Motzkin N-meanders and N-excursions.

\subsection{Motzkin N-meanders and N-excursions}

The key to enumerate N-meanders is to understand their reachable points.
The difference to N-walks is that N-meanders cannot go below the $x$-axis.
Recall that if an N-step would go below the $x$-axis, only the steps staying weakly above it are appended.
The following proposition shows that the two types from Definition~\ref{def:MotzkinReachablePoints} suffice to characterize the reachable points.
However, the types interact with the N-steps 
differently than before.

\begin{figure}
\begin{center}
\includegraphics[scale=0.45]{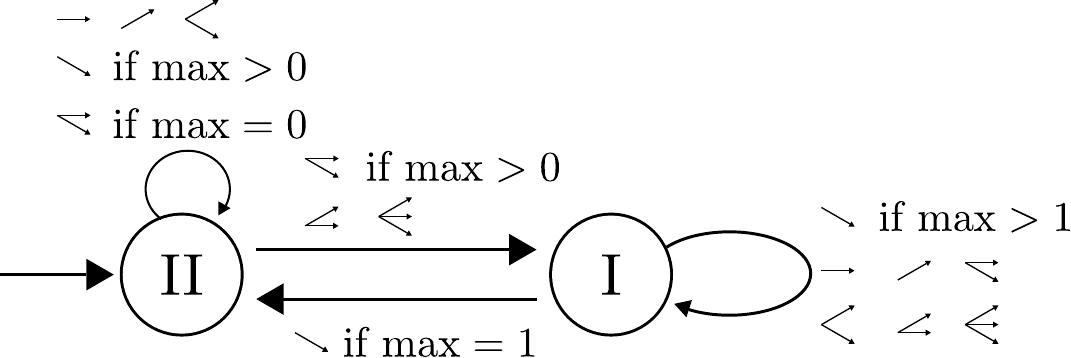}
\caption{The automaton representing the structure of reachable points of Motzkin N-meanders. Here, ``$\max$'' refers to the maximum reachable point of the N-meander.}
\label{fig:motzkin_Nmeanders_structure}
\end{center}
\end{figure}

\begin{proposition}
\label{prop:typesMotzkin}
Let $w$ be a Motzkin N-meander and $\vs \in S_M$.
\begin{itemize}
\item
If $w$ has type $\romII$,
then $w \cdot \vs$ has type $\romII$ in the following cases
\begin{itemize}
\item
$\vs \in \{\{0\}, \{1\}, \{-1, 1\}\}$,
\item
$\vs = \{-1\}$ and $\max^+(\vw) > 0$,
\item
$\vs = \{-1, 0\}$ and $\max^+(\vw) = 0$.
\end{itemize}
\item
If $w$ has type $\romII$,
then $w \cdot \vs$ has type $\romI$ in the following cases
\begin{itemize}
\item
$\vs \in \{\{0, 1\}, \{-1, 0, 1\}\}$
\item
$\vs = \{-1, 0\}$ and $\max^+(\vw) > 0$.
\end{itemize}
\item
If $\vw$ has type $\romI$,
then $w \cdot \vs$ has type $\romII$ if $\vs = \{-1\}$ and $\max^+(\vw) = 1$, otherwise, it has type $\romI$.
\end{itemize}
This result is illustrated by Figure~\ref{fig:motzkin_Nmeanders_structure}.
\end{proposition}
\begin{proof}
    The first two cases follow directly from Proposition~\ref{th:motzkin_structure}. 
    The last case is different, due to the interaction with the boundary, reducing the integer interval to a singleton.
\end{proof}

Let $M_{\romI}^+$ and $M_{\romII}^+$ denote the set of Motzkin N-meanders of type $\romI$ and $\romII$, respectively.
Their generating functions are defined as
\begin{align*}
	M_{\romI}^+(x,y; t) &= \sum_{w \in M_{\romI}^+} \bigg( \prod_{s \in w} p_s \bigg) x^{\min^+(w)} y^{\max^+(w)} t^{|w|},\\
	M_{\romII}^+(x,y; t) &= \sum_{w \in M_{\romII}^+} \bigg( \prod_{s \in w} p_s \bigg) x^{\min^+(w)} y^{\max^+(w)} t^{|w|}.
\end{align*}
The generating function for all Motzkin N-meanders is defined as $M^+(x,y;t) = M_{\romI}^+(x,y; t) + M_{\romII}^+(x,y; t)$.
Now, Proposition~\ref{prop:typesMotzkin} allows us to characterize Motzkin N-meanders and N-excursions.

\begin{theorem} \label{th:motzkin_N_meanders}
    The generating function of Motzkin N-meanders $M_{\romI}^+(x,y; t)$ of type $\romI$ is algebraic of degree $8$ and the generating function $M_{\romII}^+(x,y; t)$ of Motzkin N-meanders of type $\romII$ is algebraic of degree $16$. 
    In particular, Motzkin N-meanders $M^+(x,y;t)$ are algebraic of degree $16$ and N-excursions $M_{\romI}^+(0,y; t)$, $M_{\romII}^+(0,y; t)$, and $M^+(0,y;t)$ are algebraic of degree $16$.
\end{theorem}

\begin{proof}
Observe that $M_{\romI}^+(x,y;t)$
is divisible by $y$,
as N-meanders of type $\romI$
have maximum reachable point at least $1$.
Thus we define the column vector $\vect{M^+}(x,y; t)$ by
\begin{align*}
    \vect{M^+}(x,y; t) = \begin{pmatrix} M_{\romII}^+(x,y; t) \\ y^{-1} M_{\romI}^+(x,y; t) \end{pmatrix}.
\end{align*}
An N-meander is either empty, and associated with type $\romII$, or an N-meander to which we append an N-step.
This translates into a step-by-step construction analogous to the one in the proof of Proposition~\ref{th:Dyck_Nmeanders}, yet now for two types and with interactions between them.
Again we distinguish three cases: 
general case ($\min^+(w)>0$), 
boundary case ($\min^+(w)=0$) but non-minimal maximum ($\max^+(w)>0$ for type $\romII$ or $\max^+(w)>1$ for type $\romI$), 
and boundary case ($\min^+(w)=0$) with minimal maximum ($\max^+(w)=0$ for type $\romII$ or $\max^+(w)=1$ for type $\romI$).
We get the following system of equations characterizing the generating functions of the vector $\vect{M^+}(x,y; t)$:
\begin{align*}
	\vect{M^+}(x,y; t) &= 
    \vect{e_1} + t \Big( 
    A(x,y) (\vect{M^+}(x,y; t) - \vect{M^+}(0,y; t)) \\
    &\quad+ B(x,y) (\vect{M^+}(0,y; t) - \vect{M^+}(0,0; t)) 
    \\ & \quad+ C(x,y) \vect{M^+}(0,0; t) \Big),
\end{align*}
where $\vect{e_1}$ is the column vector $(1,0)$,
and $A(x,y)$, $B(x,y)$, $C(x,y)$ are two-by-two matrices with Laurent polynomials in $x$ and $y$ given in Figure~\ref{fig:matrices}.
These matrices model the interactions of the two types with the N-steps from Proposition~\ref{prop:typesMotzkin} and the change in the minima and maxima of the reachable points.  
\begin{figure*}
%
\begin{align*}
	A(x,y) &= {\small  
        \begin{pmatrix} 
            {\frac {p_{{-1}}}{xy}}+p_{{0}}+p_{{1}}xy+{
            \frac {p_{{-1,1}}y}{x}}
            &
            0\\ 
            \noalign{\medskip}{\frac {p_{{-1,0}}}{xy}}
            +p_{{0,1}}+{\frac {p_{{-1,0,1}}}{x}}
            &{\frac {p_{{-1}}}{xy}}+p_{{0}}+p_
            {{1}}xy+{\frac {p_{{-1,0}}}{x}}+p_{{0,1}}y+
            {\frac {(p_{{-1,1}}+p_{{-1,0,1}})y}{x}}
        \end{pmatrix} },\\[2mm]
    B(x,y) &= {\small 
    \begin{pmatrix} 
        {\frac {p_{{-1}}x}{y}}+p_{{0}}+(p_{{1}}+p_{{-1,1}})xy 
        &
        0\\ 
        \noalign{\medskip}{\frac {p_{{-1,0}}}{y}}+p_{{0,1}}+p_{{-1,0,1}}
        &
        {\frac {p_{{-1}}}{y}}+p_{{0}}+p_{{-1,0}}+p_{{1}}xy+(p_{{0,1}}+p_{{-1,1}}+p_{{-1,0,1}})y
    \end{pmatrix} }, \\[2mm]
    C(x,y) &= {\small 
    \begin{pmatrix} 
        (p_{{1}}+p_{{-1,1}})xy+p_{{0}}+p_{{-1,0}} &
        p_{{-1}}\\ 
        \noalign{\medskip}p_{{0,1}}+p_{{-1,0,1}} &
        p_{{1}}xy+(p_{{-1,1}}+p_{{0,1}}+p_{{-1,0,1}})y+p_{{0}}+p_{{-1,0}}
    \end{pmatrix}}.
\end{align*}
    \caption{Type transition matrices involved in the proof of Theorem~\ref{th:motzkin_N_meanders}.}
		\label{fig:matrices}
\end{figure*}
Next, we rearrange this equation into
\begin{align} 
\label{eq:motzkin_n_meanders}
\begin{aligned}
	\left(\id - t A(x,y)\right) \vect{M^+}(x,y; t) &= 
    \vect{e_1} - t \left(A(x,y) - B(x,y)\right) \vect{M^+}(0,y; t)
  \\& \quad - t \left(B(x,y) - C(x,y)\right) \vect{M^+}(0,0; t). 
\end{aligned}
\end{align}
On the left-hand side, we identify a factorization involving the explicit factor $K(x,y,t) = \id - t A(x,y)$.
If this system would consist of only a single equation, we could apply the classical kernel method; see Section~\ref{sec:introkernel}.
However, for this system we need to generalize this approach to matrix equations; see \cite{AsBaBaGi18} for similar generalizations.
Our idea is to choose specific values of $x$, such that $K(x,y,t)$ becomes singular, and then left-multiply by an element from its kernel to deduce a new identity. 
Due to the triangular nature of $A(x,y)$, the matrix $K(x,y,t)$ is singular when one of the diagonal entries is zero. 
This gives two candidates $X_1 \equiv X_1(y,t)$ and $X_2 \equiv X_2(y,t)$ that satisfy the following two quadratic equations, respectively:
\begin{align*}
    p_{1} t y^2 X_1^2 + (p_{0} t - 1) y X_1 + (p_{-1} + p_{-1, 1}y^2) t &= 0, 
    \\
    p_{1} t y^2 X_2^2 + \left( p_{0, 1} t y^2 + (p_{0} t - 1) y\right) X_2 + (p_{-1, 1} + p_{-1, 0, 1}) t y^2 + p_{-1, 0} t y + p_{-1} t &= 0.
\end{align*}
We always choose the unique power series solution in $t$. 
Note that the final generating functions are power series in $t$, hence we need to choose these two branches.
We then define the row vectors
\begin{align*}
	\vect{u_1} &= (1, 0),
  \\
  \vect{u_2}(y,t) &= \left(t A_{1,0}(X_2(y,t), y), 1 - t A_{0,0}(X_2(y,t), y)\right),
\end{align*}
so that the left-hand side of Equation~\eqref{eq:motzkin_n_meanders}
vanishes when evaluated at $x = X_1(y,t)$ \emph{and} left-multiplied by $\vect{u_1}$,
and also when evaluated at $x = X_2(y,t)$ \emph{and} left-multiplied by $\vect{u_2}(y,t)$.
Combining the corresponding two right-hand sides, we obtain a new two-by-two system of linear equations
\begin{equation} \label{eq:motzkin_n_meanders_second_kernel}
	t D(y,t) \vect{M^+}(0,y; t) = \vect{f}(y,t) -  E(y,t) \vect{M^+}(0,0; t),
\end{equation}
with a vector $\vect{f}(y,t)$ two (explicit) $2 \times 2$-matrices $D(y,t)$ and $E(y,t)$.
Again, the matrix $D(y,t)$ is upper-triangular, and we repeat the process from above now with respect to the variable~$y$. 
Now, $D(y,t)$ becomes singular, when $y$ is evaluated at one of the following branches satisfying the following equations
\begin{align*}
	(p_{-1, 1} + p_{1}) t Y_1^2 + (p_{0} t - 1) Y_1 + p_{-1} t &= 0,
  \\
  \left( p_{{1}}+p_{{0,1}}+p_{{-1,1}}+p_{{-1,0,1}} \right) t{{Y_2}
}^{2}+ \left( (p_{{0}}+p_{{-1,0}})t - 1 \right) {Y_2}+p_{{-1}}t &= 0.
\end{align*}
As before, the branch we will need is the unique power series solution. 
Note that from the algebraic equations it is easy to prove that $X_1(Y_1) = 1$ and $X_2(Y_2)=1$, \ie they are inverse with respect to composition. 
It would be desirable to find a combinatorial argument for this phenomenon.

Next, we deduce the following two row vectors as elements of the respective kernels:
\begin{align*}
	\vect{v_1} &= (1,0),
    \\
    \vect{v_2}(t) &= (- D_{1,0}(Y_2(t),t), D_{0,0}(Y_2(t), t)),
\end{align*}
Hence, the left-hand side of Equation~\eqref{eq:motzkin_n_meanders_second_kernel}
vanishes when evaluated at $y = Y_1(t)$ \emph{and} left-multiplied by $\vect{v_1}$,
and also when evaluated at $y = Y_2(t)$ \emph{and} left-multiplied by $\vect{v_2}(t)$.
Combining the corresponding two new equations we get
\[
	\vect{h}(t) = t F(t) \vect{M^+}(0,0; t),
\]
with a column vector $\vect{h}(t)$ and a $2 \times 2$-matrix $F(t)$.
The matrix $F(t)$ is invertible, so the generating function of Motzkin N-meanders with maximum reachable point $0$, \ie N-excursions ending in $\{0\}$, is equal to
\[
	\vect{M^+}(0,0; t) = \frac{1}{t} F(t)^{-1} \vect{h}(t).
\]
Then we substitute this expression into Equation~\eqref{eq:motzkin_n_meanders_second_kernel}
to express the generating function of Motzkin N-meanders with minimum reachable point $0$, \ie N-excursions, as
\[
	\vect{M^+}(0,y; t) = \frac{1}{t} D(y,t)^{-1} (\vect{f}(y,t) -  E(y,t) \vect{M^+}(0,0; t)).
\]
Finally, we substitute this expression into Equation~\eqref{eq:motzkin_n_meanders}
to express the generating function of Motzkin N-meanders as
\begin{align*}
	\vect{M^+}(x,y; t) &=
	\left(\id - t A(x,y)\right)^{-1}  (\vect{e_1} - t \left(A(x,y) - B(x,y)\right) \vect{M^+}(0,y; t) 
  \\ & \quad - t \left(B(x,y) - C(x,y)\right) \vect{M^+}(0,0; t)).
\end{align*}
The generating function of N-meanders and N-excursions are then, respectively,
$M_{\romI}^+(x,y; t) + M_{\romII}^+(x,y; t)$ and $M_{\romI}^+(0,y; t) + M_{\romII}^+(0,y; t)$.

Let $\KK := \QQ(p_{-1}, \dots, p_{-1,0,1},x,y,t)$ be the generic base field.
Then, the generating functions live in the algebraic extension $\KK(X_1(y), X_2(y), Y_1, Y_2)$ which is in general of degree $16$ over $\KK$. 
Note that none of the minimal polynomials factors is in the field generated by the other three. 
From the proof above, we deduce the following algebraic structure:
\begin{align*}
M_1(0,0) &\in \KK(Y_2), \\
M_2(0,0) &\in \KK(Y_1,Y_2), \\
M_1(0,y), M_1(x,y) &\in \KK(X_1(y),Y_1,Y_2), \\
M_2(0,y), M_2(x,y) &\in \KK(X_1(y),X_2(y),Y_1,Y_2).
\end{align*}
Note that for $y=1$ these relations stay valid and no simplifications happen. Thus, generically, the degrees are either $2$, $4$, $8$, or $16$. In particular, for generic weights the degrees of $M^+(x,y,t)$, $M^+(0,y,t)$ and $M^+(0,1,t)$ are of degree $16$.
\end{proof}

The previous proof shows that the generic degree of the generating functions of N-meanders is~$16$. 
However, for specific choices of weights the degree reduces. 
In particular, in the accompanying Maple worksheet we show that for N-excursions $M^+(0,1; t)$ over $\QQ(t)$ degrees $4$, $8$, or $16$ are possible.
Using resultants, we can derive the following general results.

\begin{corollary}
    The special choices of weights described below result in the following simplifications.
    \begin{itemize}
        \item If $p_{-1,1}=0$ or $p_{-1}=p_{1}$ we have $X_1(1) = \left(1 + \frac{p_{-1,1}}{p_{-1}}\right) Y_1$.
        \item If $p_{-1,0}=p_{0,1}$ and either $p_{-1}=p_{1}$ or $p_{-1,0} + p_{-1,1} + p_{-1,0,1}=0$ we have $X_2(1) = \left(1 + \frac{p_{-1,0} + p_{-1,1} + p_{-1,0,1}}{p_{-1}}\right) Y_2$.
    \end{itemize}
\end{corollary}

If all weights are equal to $1$ then generating functions of N-excursions $M^+(0,1;t)$ and N-meanders $M^+(1,1,t)$ are at most algebraic of degree $4$.
In particular, the generating function of N-meanders $M^+(1,1;t)$ is even algebraic of degree $2$ and given by
\begin{align*}
	M^+(1,1;t) &= \frac{10t-1+\sqrt{(1+2t)(1-6t)}}{8t(1-7t)}.
\end{align*}
Therefore, by singularity analysis (see Section~\ref{sec:introAC}), the total number of N-meanders behaves for $n \to \infty$ like
\begin{align*}
	\frac{3}{4}7^n + \frac{3\sqrt{3}}{2} \frac{6^n}{\sqrt{\pi n^3}} + \bigO\left(\frac{6^n}{n^{5/2}}\right).
\end{align*}
The generating function $E = M^+(0,1;t)$ of N-excursions is indeed algebraic of degree $4$:
{\small
\begin{align*}
&256\,{t}^{5} \left( 2\,t+1 \right) ^{2} \left( 7\,t-1 \right) ^{2}
 \left( 4\,t-1 \right) ^{2}{E}^{4}\\
 &+16\,{t}^{2} \left( 2\,t+1 \right) 
 \left( 7\,t-1 \right)  \left( 4\,t-1 \right)  
 \left( 564\,{t}^{5}-192
\,{t}^{4}-85\,{t}^{3}+69\,{t}^{2}-15\,t+1 \right) {E}^{3} \\
&+\left( 
197264\,{t}^{9}-44448\,{t}^{8}-68144\,{t}^{7}+36720\,{t}^{6}-864\,{t}^
{5}-4624\,{t}^{4}+1742\,{t}^{3}-303\,{t}^{2}+27\,t-1 \right) {E}^{2}\\
 &+\left( 36000\,{t}^{8}-12220\,{t}^{7}-14436\,{t}^{6}+8901\,{t}^{5}+121
\,{t}^{4}-1256\,{t}^{3}+375\,{t}^{2}-45\,t+2 \right) E  \\
&+4288\,{t}^{7}-
456\,{t}^{6}-1901\,{t}^{5}+591\,{t}^{4}+170\,{t}^{3}-108\,{t}^{2}+18\,
t-1=0.
\end{align*}
}%
It is possible to solve this equation explicitly to obtain a (complicated) closed form closed form solution for $E(t)$; see the accompanying Maple worksheet. 
As in the previous cases, the dominant singularity arises from a simple pole while lower order terms are of a smaller exponential growth.
In particular, the total number of Motzkin N-excursions behaves for $n \to \infty$ like
\begin{align*}
	\frac{9}{16}7^n - \gamma \frac{6^n}{\sqrt{\pi n^3}} + \bigO\left(\frac{6^n}{n^{5/2}}\right), 
\end{align*}
where $\gamma \approx 0.6183$ is the positive real solution of $1024\gamma^4-8019\gamma^2+2916=0$. 
This means that for large $n$ asymptotically $75\%$ of all N-walks are N-meanders and $75\%$ of these N-meanders are N-excursions.

In Appendix~\ref{sec:SubclassesNMotzkin} we considered all possible combination of weights $p_{\vs} \in \{0,1\}$ for $\vs \in S_M$ for N-excursions and N-meanders and found several connections with known sequences in the OEIS, yet most of them are not. 
For example, when all $p_i$'s are equal to $1$ neither the sequences of N-meanders nor N-excursions are in the OEIS.

\newcommand{\NtwoDmap}{\varphi}
\subsection{Bijections with two-dimensional lattice paths}

Any N-step set can be interpreted as a two-dimensional lattice path problem. 
The minimum reachable point is mapped to the $x$-coordinate and the maximum reachable point to the $y$-coordinate: 
We define the mapping $\NtwoDmap$ from N-steps to two-dimensional steps as
\begin{align*}
    \NtwoDmap(\vs) = \left( \min(\vs), \max(\vs) \right).
\end{align*}
This generalizes the previously discussed mapping~\eqref{eq:DyckNWalkBijection}.
For Motzkin N-steps $S_M$ this gives the following mapping to nearest neighbor steps:
\begin{align*}
    \{-1\} &\mapsto (-1,-1), &    
    \{0,1\} &\mapsto (0,1), &
    \{-1,1\} &\mapsto (-1,1)_1, \\
    \{0\} &\mapsto (0,0), &
    \{-1,0\} &\mapsto (-1,0), &
    \{-1,0,1\} &\mapsto (-1,1)_2, \\
    \{1\} &\mapsto (1,1). &&&
\end{align*}
Note that the step $(-1,1)$ comes in two colors, depending on whether it corresponds to $\{-1,1\}$ or $\{-1,0,1\}$.
We denote the two colors by $(-1,1)_1$ and $(-1,1)_2$.
Moreover, by construction the steps $(1,0)$, $(1,-1)$, and $(0,-1)$ will never be used. 
This played a crucial role in proving the algebraicity of N-bridges in Theorems~\ref{theo:DyckNBridges} and \ref{theo:MotzkinNBridges} as it led to Equation~\eqref{eq:NDyckBridgesInterpretation}.
In the following section, we will show that this fact holds for arbitrary finite N-step sets, and also allows to prove the algebraicity of general N-bridges.

Now we use $\NtwoDmap$ to give bijections between different families of Motzkin N-walks and two-dimensional walks.
In general the steps above are used, just on the axes we might need to modify the steps; compare with Figure~\ref{fig:DyckReflectionAbsorption}.
\begin{enumerate}
    \item N-walks are in bijection with unconstrained two-dimensional walks.
    \item N-bridges are in bijection with two-dimensional walks ending in the second quadrant, which, if only steps $\{(-1,-1),(1,1),(-1,1)_1\}$ are used it, have to be of even length.
    \item N-meanders are in bijection with two-dimensional walks confined to the first quadrant, where the positive $y$-axis and the origin use different steps that depend on the possible types.
    \item N-excursions are in bijection with two-dimensional walks associated with N-meanders that end on the $y$-axis.
\end{enumerate}

		\section{N-bridges with general N-steps}
        \label{sec:general_NBridges}

We have already seen in~\eqref{eq:NWalksGeneric} that the generating function of N-walks is always rational. 
In this section we treat the generating function of N-bridges for arbitrary finite N-step set. 
The following main result generalizes Theorems~\ref{theo:DyckNBridges} and \ref{theo:MotzkinNBridges}.

\begin{theorem} \label{th:general_bridges}
For any finite N-step set $S$, the generating function $B(x,y;t)$ of N-bridges with respect to length, minimum, and maximum reachable point is algebraic.
\end{theorem}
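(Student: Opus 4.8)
The plan is to reproduce the argument behind Theorems~\ref{theo:DyckNBridges} and~\ref{theo:MotzkinNBridges}, with the ad hoc ``types'' of reachable points replaced by a general finite automaton.

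\emph{Step 1: a finite automaton of types.}
The reachable set $\reach(w)$ of an N-walk is the iterated Minkowski sumset of the finite sets of $S$ used along $w$. The first step is a structure theorem for such sumsets: there exist constants $L=L(S)$ and $d=d(S)$ so that, up to translation, $\reach(w)$ is determined by a \emph{bottom germ} $\reach(w)\cap[\min(w),\min(w)+L]$, a \emph{top germ} $\reach(w)\cap[\max(w)-L,\max(w)]$, and --- whenever $\max(w)-\min(w)$ is large --- a fixed union of residue classes modulo $d$ filling the middle interval $[\min(w)+L,\max(w)-L]$; moreover only finitely many such configurations, which we call \emph{types} $\tau$, occur (the remaining reachable sets, with $\max(w)-\min(w)$ below the threshold, are translates of subsets of a fixed bounded interval, hence also finitely many). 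This stabilisation --- bounded complexity at the two ends, periodic in the middle --- is the heart of the matter and follows from Khovanskii-type and numerical-semigroup arguments for iterated sumsets. Because $\min(w\cdot s)=\min(w)+\min(s)$ and $\max(w\cdot s)=\max(w)+\max(s)$, appending an N-step $s$ shifts $(\min(w),\max(w))$ by the fixed vector $(\min(s),\max(s))$ while updating the type by a \emph{deterministic} rule $(\tau,s)\mapsto\tau'$; thus the triple $(\tau,\min(w),\max(w))$ is produced by a finite automaton. Grouping N-walks by terminal type, the generating functions
\[
  D_\tau(x,y;t):=\sum_{w:\,\mathrm{type}(w)=\tau} x^{\min(w)}\,y^{\max(w)}\,t^{|w|}
\]
solve a linear system $\vect{D}(x,y;t)=\vect{e}_{\tau_0}+t\,\mathcal{M}(x,y)\,\vect{D}(x,y;t)$, where $\tau_0$ is the type of $\{0\}$ and $\mathcal{M}$ has Laurent-polynomial entries; hence each $D_\tau$ is rational in $x,y,t$, and $\sum_\tau D_\tau=D(x,y;t)$ recovers~\eqref{eq:NWalksGeneric}.

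\emph{Step 2: N-bridges as extractions from the $D_\tau$.}
An N-walk is an N-bridge exactly when $0\in\reach(w)$. Fix a type $\tau$ and let $R_\tau(m,M)$ be the reachable set with $\min=m$, $\max=M$. By Step~1 the locus $\{(m,M):m\le0\le M,\ 0\in R_\tau(m,M)\}$ is a finite union of three kinds of sets: (i) \emph{congruence quadrants} $\{m\le -L,\ M\ge L,\ m\equiv a\ (\mathrm{mod}\ d)\}$, corresponding to ``$0$ lies in the periodic middle''; (ii) finitely many \emph{rays} of the form $\{m=m_0,\ M\ge M_0\}$ or $\{m\le m_0,\ M=M_0\}$, corresponding to ``$0$ lies in the bottom (\resp top) germ'', which forces $m$ (\resp $M$) to one of finitely many values; and (iii) finitely many single pairs $(m,M)$ with $\max-\min$ below the threshold. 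Summing the matching monomials of the rational $D_\tau$ over these loci writes $B(x,y;t)=\sum_\tau(\cdots)$ as a finite linear combination of: a roots-of-unity filter in $x$ followed by a double half-line extraction, such as
\[
  [x^{\le 0}\,y^{\ge 0}]\ \frac{1}{d}\sum_{j=0}^{d-1}\omega^{-aj}\,D_\tau(\omega^j x,\,y;\,t),\qquad \omega=e^{2\pi i/d}
\]
(up to finitely many single-exponent corrections accounting for the difference between $m\le 0$ and $m\le -L$); single-exponent extractions $[x^{m_0}]$ or $[y^{M_0}]$ of $D_\tau$ followed by a one-sided extraction in the other variable; and finitely many single monomials $[x^{m_0}y^{M_0}]\,D_\tau$.

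\emph{Step 3: algebraicity.}
Each of these operations maps an algebraic series to an algebraic one: a roots-of-unity filter is a finite linear combination; extracting the nonnegative (\resp nonpositive) part of an algebraic formal Laurent series keeps it algebraic, by the theory of formal Laurent series with nonnegative coefficients (see \cite[Section~6]{Gessel80}, exactly as for~\eqref{eq:NDyckBridgesInterpretation}); extracting a single exponent $[x^{m_0}]$ of a rational function yields an algebraic function of the remaining variables (a finite sum of residue contributions at the roots of the denominator); and finite sums of algebraic functions are algebraic. Applied to the rational functions $D_\tau$, this shows that $B(x,y;t)$ is algebraic.

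\emph{Main obstacle.}
The only genuine difficulty is the structure theorem of Step~1 --- that finitely many types suffice, \ie the finiteness of the automaton. Once it is in place, the remaining work is bookkeeping, chiefly the careful resolution of the overlaps between the three loci of Step~2.
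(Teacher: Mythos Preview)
Your proposal is correct and follows essentially the same route as the paper: build a finite automaton whose states are the finitely many ``types'' of reachable sets (bottom germ, top germ, periodic middle --- the paper's $(g,k,\va,\vb,\vc)$ formalism), obtain rational generating functions $D_\tau$ per type, and then express $B(x,y;t)$ as a finite combination of half-plane extractions, roots-of-unity filters, and single-coefficient corrections applied to these rationals. You are right that the only real work is the finiteness of the automaton; the paper establishes this concretely via Schur's theorem and Frobenius numbers (Lemmas~\ref{th:geqmS}--\ref{th:to:proper} and Proposition~\ref{th:automatic:type:of:sumsets}), which is exactly the ``Khovanskii-type and numerical-semigroup argument'' you allude to.
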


A method for computing this generating function is provided by the proof
in Subsection~\ref{sec:proof_general_bridges}.
In order to present this result, we first establish the required algebraic setting.

    \subsection{Sumset monoid}
    \label{sec:sumset:monoid}

This subsection contains the algebraic definitions
needed to describe sets of reachable points
of N-walks on a given N-step set.

\paragraph{Finite integer sets.}
We consider the set $\integersets$ containing
all finite sets of integers
\[
    \integersets = \{\vs \mid \vs \subset \integers,\ |\vs| < \infty\}.
\]
They will represent N-steps as well as
sets of reachable points. Thus, $\emptyset \in \mF$ corresponds to N-walks with an empty set of reachable points (this can be the case for N-meanders and N-excursions).

\paragraph{Sum.}
The endpoint of a classical walk is the sum of its steps. Extending this construction to N-walks requires a suitable notion of sum for N-steps. The following definition is central in additive combinatorics \cite{tao2006additive}.

\begin{definition}[Sumset]
The \emph{sumset} or \emph{Minkowski sum} of two integer sets $\vs$ and $\vt$ is defined as
\[
    \vs + \vt
    =
    \{a + b \mid a \in \vs,\ b \in \vt\}.
\]
The family $\mF$, equipped with the sumset operator $+$, forms a commutative monoid with neutral element $\{0\}$.
\end{definition}

Note that for any $\vs \in \mF$, we have $\vs + \emptyset = \emptyset$. Consequently, once an N-walk has no reachable point, adding further N-steps cannot create new reachable points. Furthermore, shifting a set $\vs$ by $c \in \integers$ corresponds to a sumset with the set $\{c\}$:
\[
    \vs + \{c\}
    =
    \{a + c \mid a \in \vs\}.
\]

\begin{lemma}[Reachable points as sumsets]
The set of reachable points of an N-walk $w = (\vw_1, \ldots, \vw_n)$ is equal to the sumset of its N-steps $\vw_1 + \cdots + \vw_n$.
\end{lemma}

\begin{proof}
A point $x$ is reachable if and only if there exists a compatible walk
$(v_1,\dots,v_n)$ with $v_i \in \vw_i$ such that $x = \sum_{i=1}^n v_i$,
which is equivalent to $x \in \vw_1 + \cdots + \vw_n$.
\end{proof}

The commutativity of the monoid $(\mF, +)$ implies that the reachable points of an N-walk are independent of the order of the N-steps.

\smallskip

The subtraction operation
\[
    \vs - \vt
    =
    \{a - b \mid a \in \vs,\ b \in \vt\}
\]
is defined in the same way.

\paragraph{Product.}
Given a nonnegative integer $n$ and $\vs \in \integersets$,
we define $n \times \vs$ as the $n$-fold sumset of $\vs$ with itself
\[
    n \times \vs 
    = \vs + \cdots + \vs
    = \{a_1 + \cdots + a_n \mid 
    a_i \in \vs\}.
\]
For example, we have
\[
    n \times \{0, m\} =
    \{k\, m \mid 0 \leq k \leq n\}
\]
and $0 \times \vs = \{0\}$ by convention.

\paragraph{Norm.}
The \emph{norm} of any $\vs \in \integersets$ is defined as
\[
    \|\vs\| =
    \begin{cases}
        0 & \text{if } \vs = \emptyset,\\
        \max(\vs) - \min(\vs) & \text{otherwise.}
    \end{cases}
\]

\paragraph{Bottom pruning.}
Let $\vs, \vt \in \integersets$ be two sets of integers. 
Then \emph{bottom pruning} is defined as
\[
    \vs \usub \vt =
    \begin{cases}
        \emptyset & \text{if } \vs = \emptyset,\\
        \vs \setminus (\{\min(\vs)\} + \vt)
        & \text{otherwise.}
    \end{cases}
\]
It corresponds to shifting $\vt$ to the bottom of $\vs$,
and then removing the corresponding elements from~$\vs$.
In other words, $\vt$ accounts for the elements relative to $\min(\vs)$.
Note that negative elements in~$\vt$ will always lie outside the range of $\vs$ after shifting.
For example, removing the smallest element of~$\vs \neq \emptyset$
corresponds to bottom pruning by $\{0\}$, and 
\[
    \{3, 4, 6, 7, 8\} \usub \{-2,1, 3, 5\} =
    \{3, 4, 6, 7, 8\} \setminus \{1,4, 6, 8\} =
    \{3, 7\}.
\]

\paragraph{Top pruning.}
Let $\vs, \vt \in \integersets$ be two sets of integers. 
Then \emph{top pruning} is defined as
\[
    \vs \msub \vt =
    \begin{cases}
        \emptyset & \text{if } \vs = \emptyset,\\
        \vs \setminus (\{\max(\vs)\} - \vt)
        & \text{otherwise.}
    \end{cases}
\]
Top pruning corresponds to removing from the top,
where $\vt$ accounts for the elements relative to $\max(\vs)$
from top to bottom.
Again, negative elements in $\vt$ will lie outside the range after shifting.
For example, removing the largest element of $\vs \neq \emptyset$
corresponds to a top pruning by $\{0\}$, and
\[
    \{0, 1, 2, 3, 4\} \msub \{1, 2\} = \{0, 1, 4\}.
\]
Compare the previous example to the bottom pruning
\[
    \{0, 1, 2, 3, 4\} \usub \{1, 2\} = \{0, 3, 4\}.
\]

\paragraph{Equivalence.}
Two integer sets $\vs, \vt \in \integersets$
are \emph{equivalent}, denoted by $\vs \sim \vt$,
if there exists an integer $m$ such that
$\vs = \vt + \{m\}$.
In other words, one can be obtained from the other by a shift.

\paragraph{Conjugate.}
The \emph{conjugate} of an element of $\vs \in \integersets$ is defined as
\[
    \conjugate{\vs}
    =
    \begin{cases}
    \emptyset & \text{if } \vs = \emptyset,\\
    \{\min(\vs) + \max(\vs) - a \mid a \in \vs\} & \text{otherwise.}
    \end{cases}
\]
For example, the conjugate of $\{1, 3, 4\}$ is $\{1, 2, 4\}$.
Furthermore, bottom and top pruning are linked by conjugation
\[
    \conjugate{\vs} \usub \vt
    \sim
    \conjugate{\vs \msub \vt}.
\]

\paragraph{Type.}
For two nonnegative integers $n$, $k$
and $\va, \vb, \vc \in \integersets$,
we define $\type n k \va \vb \vc$ as the following subset of~$\integersets$:
\[
    \type n k \va \vb \vc =
    \left\{
        j \times \{0, n\} + \vb \usub \va \msub \vc + \{m\}
        \mid
        m \in \integers,\ j \in \integers_{\geq k}
    \right\}.
\]
For example, 
\[
    \type{3}{1}{\emptyset}{\{0,1\}}{\{1,2,3\}}  =
        \left\{ \{0,4\}, \{0,1,3,7\}, \{0,1,3,4,6,10\}, \{0,1,3,4,6,7,9,13\}, \dots \right\}.
\]

Provided $n \geq 1$ and $\vb \neq \emptyset$,
$\type n k \va \vb \vc$ contains elements of arbitrarily large norm.
However, the key property is that each $\vt \in \type n k \va \vb \vc$ is uniquely characterized
by  $\min(\vt)$ and $\max(\vt)$.
We will prove in Lemma~\ref{th:type:of:sumsets}
that there are finitely many
possible types for the reachable points
of an N-walk on a given N-step set.
In our generating functions, we will keep track
of the minimum and maximum reachable points,
as well as the type of the reachable points.
This information will be enough
to reconstruct the complete set of reachable points.
Below are a few examples of relations between types:
\begin{align*}
    \type 0 k \va \vb \vc &=
    \{\vb \usub \va \msub \vc + \{m\} \mid m \in \integers\},
    \\
    \type n k \va \emptyset \vc &=
    \{\emptyset\},
    \\
    \type{1}{0}{\emptyset}{\{0\}}{\emptyset} &=
    \{[u, v] \mid v \geq u\},
    \\
    \type n k \va {\vb + \{m\}} \vc &=
    \type n k \va \vb \vc,
    \\
    \type n k \va \vb \vc &=
    \{\vs \usub \va \msub \vc \mid \vs \in \type n k \emptyset \vb \emptyset\}.
\end{align*}

\paragraph{Proper type.}
A type $(n,k,\va,\vb,\vc)$ is \emph{proper}
if either
\begin{itemize}
\item $n=0$, then $k=0$, $\va = \vc = \emptyset$ and
either $b = \emptyset$ or
$\min(\vb) = 0$, or
\item $n > 0$, then $\min(\va) \geq 0$, $\min(\vc) \geq 0$, $\min(\vb) = 0$, $\vb \subset [0,n-1]$,
and $k n > \max(\va) + \max(\vc)$.
\end{itemize}
The condition on~$\vb$ guarantees that in the sumset of $j \times \{0, n\}$ and~$\vb$ there are no overlaps.
Therefore, all $\vs$ in a proper type are, except for the bottom and the top, constructed using a periodic repetition of a pattern defined by~$\vb$ of size $n$.

Since for any $m \in \integers$ and type $(n,k,\va,\vb,\vc)$,
we have
\[
    \type n k \va \vb \vc
    =
    \type n k
    {\va \cap \integers_{\geq 0}}
    {\vb + \{m\}}
    {\vc \cap \integers_{\geq 0}},
\]
and
\[
    \type 0 k \va \vb \vc
    =
    \type 0 0 \emptyset {\vb \usub \va \msub \vc} \emptyset,
\]
any type with $n=0$ 
is equal to a proper type,
and we will prove in Lemma~\ref{th:to:proper}
that for any type $(n, k, \va, \vb, \vc)$ with $n > 0$ and $\vb \neq \emptyset$,
there exist $k'$ large enough and parameters $\va', \vb', \vc'$ such that it can be represented by the proper type $(n, k', \va', \vb', \vc')$.

        \subsection{Automaton for N-bridges}
        \label{sec:proof_general_bridges}

For N-walks and in particular N-bridges, we will show that finitely many types suffice to capture all possible sets of reachable points (of which in general, however, infinitely many will exist).

\begin{proposition} \label{th:type:of:sumsets}
For any finite subset $S \subset \integersets$,
there is a finite set of types $(n_i, k_i, \va_i, \vb_i, \vc_i)_{1 \leq i \leq k}$
such that any element generated by $(S, +)$
(\ie obtained by any finite number of sumsets of elements of $S$)
belongs to $\type{n_i}{k_i}{\va_i}{\vb_i}{\vc_i}$ for some $1 \leq i \leq k$.
\end{proposition}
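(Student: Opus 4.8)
The plan is to prove the statement by induction on the length of the N-walk, realized as the construction of a \emph{finite automaton whose states are types}. Concretely, I would exhibit a finite set $\mathcal{T}$ of types together with, for every $T\in\mathcal{T}$ and every N-step $\vs\in S$, a finite subset $\delta(T,\vs)\subseteq\mathcal{T}$, such that the reachable set $\{0\}$ of the empty walk lies in some $T_0\in\mathcal{T}$, and such that whenever a set $\vr$ lies in a type $T\in\mathcal{T}$, the sumset $\vr+\vs$ lies in some type of $\delta(T,\vs)$. A one-line induction on $|w|$ then shows that the reachable set of every N-walk lies in some member of $\mathcal{T}$, which is the assertion. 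Since every type is stable under the translation $\vt\mapsto\vt+\{m\}$, and the reachable set of $\vs_1\cdots\vs_n$ is a translate of the reachable set of the word in which each $\vs_i$ is replaced by its normalization $\{a-\min(\vs_i)\mid a\in\vs_i\}$, we may and do assume throughout that every $\vs\in S$ has $\min(\vs)=0$; then each reachable set is canonically normalized with minimum $0$, so ``$\vr$ lies in type $T$'' means the shift in $T$ is trivial.

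Next I would show that each ingredient of a type occurring here ranges over a finite set. For the \emph{period} $n$ and \emph{pattern} $\vb$: adding a fixed finite set $\vs$ to an $n$-periodic repetition of a pattern of size at most $n$ produces, away from a bounded prefix and a bounded suffix, again an $n$-periodic repetition of such a pattern, namely $\bigl((\vb+\vs)+n\integers\bigr)\cap[0,n-1]$; hence the period never needs to grow along a walk, and it may always be taken equal to the fixed integer $N:=\operatorname{lcm}_{\vs\in S}\max(1,\|\vs\|)$, so that $\vb$ ranges over the finitely many subsets of $[0,N-1]$. For the \emph{boundary prunings} $\va$ (bottom defect) and $\vc$ (top defect): an element of $\vr=\vs_{i_1}+\cdots+\vs_{i_\ell}$ that is at most $L$ is a sum of selected $x_{i_j}\in\vs_{i_j}$ with at most $L$ nonzero summands, so $\vr\cap[0,L]$ depends only on the truncated multiplicities $\bigl(\min(a_\vs,L)\bigr)_{\vs\in S}$ — a bounded amount of data — and symmetrically so does $\vr\cap[\max(\vr)-L,\max(\vr)]$, which one transfers to the bottom situation via the conjugation identity $\conjugate{\vs}\usub\vt\sim\conjugate{\vs\msub\vt}$. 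Taking $L>N$ and larger than every $\|\vs\|$, these finitely many truncations determine finitely many possible $\va$ and $\vc$; and $k$ can be fixed uniformly large enough that in $j\times\{0,N\}+\vb\usub\va\msub\vc$ the pruned prefix and suffix never meet, leaving finitely many $k$ as well.

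The core of the argument, and what I expect to be the main obstacle, is the \emph{stabilization}: once a walk is long enough, its reachable set is \emph{exactly} one of the finitely many types with $n\ge1$ described above. This is a Khovanskii-type structure statement for iterated sumsets — for a fixed support $I\subseteq S$ there is a threshold $A$ such that whenever $a_\vs\ge A$ for every $\vs\in I$, the set $\sum_{\vs\in I}a_\vs\times\vs$ is the full arithmetic progression $\{0,d_I,2d_I,\dots,M\}$ with $M=\sum_{\vs\in I}a_\vs\|\vs\|$ and $d_I=\gcd\bigl(\bigcup_{\vs\in I}\vs\bigr)$, minus a bottom defect and a top defect of bounded norm that no longer depend on the $a_\vs$; this is precisely the shape $j\times\{0,n\}+\vb\usub\va\msub\vc$ of a type. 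The two subtleties here are proving this stabilization (the eventual structure of $n$-fold sumsets, together with the control of the defects, as for Ap\'ery sets of numerical semigroups) and packaging the resulting picture into the rigid $5$-tuple format with its proper-type constraints.

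To conclude, I would take $\mathcal{T}$ to be the union of the finitely many reachable sets of walks short enough not to have stabilized — each regarded as the degenerate type $\type 0 0 \emptyset \vb \emptyset$ with $\vb$ that set itself — together with the finitely many stable types isolated above. One then checks that $\mathcal{T}$ is closed under every transition $\delta(\cdot,\vs)$: a short state maps to a short state or, once the walk is long enough, into a stable type, while a stable type maps into a stable type by the period-preservation and boundary arguments of the second paragraph. Finally, one replaces each member of $\mathcal{T}$ by an equal \emph{proper} type using Lemma~\ref{th:to:proper} and the reductions recorded just before it, yielding the desired finite automaton and completing the proof.
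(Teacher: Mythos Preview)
Your plan is essentially the paper's own route: it does not prove Proposition~\ref{th:type:of:sumsets} in isolation but obtains it as a corollary of the finite-automaton statement (Proposition~\ref{th:automatic:type:of:sumsets}), and the three ingredients you isolate---eventual periodicity of iterated sumsets, closure of types under sumset with a fixed $\vs$, and a separate treatment of short walks---correspond to Lemmas~\ref{th:geqmS}, \ref{th:sumset:to:type}, and the finishing construction in the proof of Proposition~\ref{th:automatic:type:of:sumsets}. Two organizational differences are worth flagging. You fix one global period $N=\operatorname{lcm}_{\vs}\|\vs\|$, whereas the paper uses the finer, subset-dependent period $g_T=\gcd\bigl(\bigcup_{\vs\in T}\vs\bigr)$ extracted via Schur's theorem on the Frobenius number; your choice is legitimate (every actual period divides $N$, so every reachable set is $N$-periodic in its bulk) but produces a coarser, larger automaton. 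More substantively, the paper does not label states by types directly: its states are pairs $\bigl(T,(u_{\vt})_{\vt\notin T}\bigr)$ recording which N-steps have already been used at least a threshold $m(T)$ times together with the exact small multiplicities of the remaining ones, and the type is then \emph{computed} from this data via Lemmas~\ref{th:geqmS} and~\ref{th:sumset:to:type}. This indirection is precisely the mechanism that handles cleanly the case your sketch treats only implicitly---walks that are long overall yet use some particular N-step fewer than $A$ times, so that your Khovanskii-type stabilization does not apply to the full support---and it is also what buys the acyclicity (modulo loops) of the automaton in Proposition~\ref{th:automatic:type:of:sumsets}, a structural property that a pure type-to-type transition function would not in general enjoy.
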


The last proposition is almost enough to prove
that general N-bridges have algebraic generating functions.
In fact, we rely on the following, more effective, generalization.

\begin{proposition} \label{th:automatic:type:of:sumsets}
For any finite subset $S \subset \integersets$ of N-steps,
there is a finite automaton $\mA$ on the alphabet $S$
where each state corresponds to a proper type,
such that for any N-walk $w = (\vw_1, \ldots, \vw_m) \in S^m$,
the sumset $\vw_1 + \cdots + \vw_m$
belongs to the type corresponding to the state that
$\mA$ reaches after reading $w$.
Furthermore, if loops are removed,
the directed graph underlying this automaton is acyclic.
\end{proposition}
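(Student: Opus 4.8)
The plan is to construct the automaton $\mA$ explicitly by taking \emph{proper types} as states and letting a letter $\vs \in S$ act on a state by the sumset operation, followed by a normalization step that rewrites the resulting type back into proper form. First I would set up the transition rule: given a proper type $\tau = (n, k, \va, \vb, \vc)$ and an N-step $\vs \in S$, every $\vt \in \tau$ is sent to $\vt + \vs$, and I would show that the family $\{\vt + \vs \mid \vt \in \tau\}$ is again contained in a single type $\tau'$. This is the arithmetic heart of the matter: adding a fixed finite set $\vs$ to the (eventually) periodic structure $j \times \{0,n\} + \vb \usub \va \msub \vc + \{m\}$ either leaves the period $n$ unchanged (modifying the boundary data $\va, \vc$ and possibly the pattern $\vb$ and the threshold $k$), or forces a coarsening of the period — exactly the phenomenon already seen for the Motzkin step set in Proposition~\ref{th:motzkin_structure}, where a period-$2$ type passes to a period-$1$ type. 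I would then invoke the reduction to proper types: by the identities recorded just before the subsection, any type with $n = 0$ equals a proper type, and by Lemma~\ref{th:to:proper} any type with $n > 0$ equals a proper type once $k$ is taken large enough, so $\tau'$ may be replaced by a canonical proper representative. Defining the transition $\tau \xrightarrow{\vs} \tau'$ this way gives a deterministic automaton, and the correctness statement (that after reading $w = (\vs_1,\dots,\vs_m)$ the reached state's type contains $\vs_1 + \cdots + \vs_m$) follows by a straightforward induction on $m$, the base case being the empty walk whose sumset is $\{0\}$, contained in the proper type $(0,0,\emptyset,\{0\},\emptyset)$.

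Next I would argue \emph{finiteness} of the state set. By Proposition~\ref{th:type:of:sumsets} there are finitely many types needed to cover all sumsets generated by $(S,+)$; the point here is to bound the proper representatives. The period $n$ of any reachable type divides something controlled by $S$ (concretely, $n$ is bounded by, e.g., the gcd-type data of the differences occurring in the steps of $S$, and in any case only finitely many values of $n$ arise, since periods can only stay equal or coarsen along a walk and coarsening strictly decreases $n$). For each admissible $n$, properness forces $\min(\vb) = 0$ and $\vb \subset [0, n-1]$, so there are finitely many choices of $\vb$; properness also forces $\min(\va), \min(\vc) \geq 0$, and — crucially — the normalization always replaces $\va$ by $\va \cap \integers_{\geq 0}$ and $\vc$ by $\vc \cap \integers_{\geq 0}$, while the maxima $\max(\va), \max(\vc)$ that actually influence the type cannot exceed the total ``bite'' that finitely many applications of pruning by steps from $S$ can take, which is bounded in terms of $\max_{\vs \in S}\|\vs\|$; finally $k$ is pinned down as the least integer with $kn > \max(\va) + \max(\vc)$. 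Hence only finitely many proper types appear as states.

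For the \emph{acyclicity} claim, I would show that every non-loop transition strictly increases a suitable rank function on proper types. The natural candidate is lexicographic: first the period $n$ decreases weakly (and strictly whenever a coarsening occurs), and when $n$ is unchanged one tracks the monotone growth of the pruning data — each application of a step $\vs$ with $\|\vs\| \ge 1$ either coarsens the period or strictly enlarges $\max(\va) + \max(\vc)$ (equivalently increases the threshold $k$), because adding a nontrivial finite set and then re-normalizing cannot undo an already-committed boundary truncation. A transition that changes none of these quantities must act as a pure shift $\vt \mapsto \vt + \{m\}$ on every element of the type, i.e.\ it maps $\tau$ to itself and is therefore a loop. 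Thus after deleting loops the underlying digraph admits a topological order and is acyclic. The main obstacle I anticipate is the bookkeeping in the first step — proving cleanly that $\{\vt + \vs : \vt \in \tau\}$ lies in a single type and computing the updated data $(\va', \vb', \vc', k')$, especially handling the interaction between the periodic ``bulk'' governed by $\vb$ and the two finite ends governed by $\va, \vc$ when $\vs$ straddles a period boundary or triggers a change of period; everything else (finiteness, acyclicity, the inductive correctness) is then a matter of extracting explicit bounds from that analysis, and can lean directly on Lemma~\ref{th:to:proper} and Proposition~\ref{th:type:of:sumsets}.
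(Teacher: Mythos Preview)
Your outline hides a real obstacle in the very first step. Lemma~\ref{th:sumset:to:type} does not say that for \emph{every} threshold $k$ one has $\type g k \va \vb \vc + \vs$ equal to a single type; it says this holds for some $k$ large enough (in the proof, $kg \ge 2 + \max(\va)+\max(\vb)+\max(\vc)+\max(\vs)$). If your current state is a proper type with its own threshold $k_0$ and the $k$ required by the lemma exceeds $k_0$, then the elements with $k_0 \le j < k$ are not covered: their sumsets with $\vs$ need not land in the proper type you computed, so the transition ``add $\vs$ and renormalise via Lemmas~\ref{th:sumset:to:type} and~\ref{th:to:proper}'' is not well-defined on the whole state. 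You cannot simply raise $k_0$ after the fact, because that would discard walks already assigned to the state.

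This is exactly why the paper does \emph{not} take proper types as states. Instead, a state records a subset $T_q \subseteq S'$ of steps that have already been used ``enough times'' together with the exact usage counts $u_{q,\vt}$ of the remaining steps. The type attached to a state is obtained by first applying Lemma~\ref{th:geqmS} to $T_q$ (this is the stabilisation result that actually pins down $\va,\vc$ via Frobenius numbers, and is the ingredient your sketch never uses) and then adding the finite sumset $\sum_{\vt \notin T_q} u_{q,\vt}\times\vt$ via Lemma~\ref{th:sumset:to:type}. Finiteness and acyclicity then come for free from the partial order on pairs $(T_q,(u_{q,\vt}))$: either $T_q$ strictly grows, or $T_q$ is fixed and some $u_{q,\vt}$ increases by one within a bounded range. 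The threshold issue above is handled by a separate finite layer of states for short walks (the paper's length-$L$ prefix), after which every walk has enough steps for all the relevant lemmas to apply. Your rank argument (period decreases, then $\max(\va)+\max(\vc)$ increases) does not survive: under Lemma~\ref{th:sumset:to:type} the period $g$ never changes, and there is no a priori bound on how large $\max(\va')+\max(\vc')$ can become under iterated application, so you cannot get both finiteness and strict increase simultaneously without the stabilisation of Lemma~\ref{th:geqmS}. Finally, invoking Proposition~\ref{th:type:of:sumsets} for finiteness is circular here, since in the paper it is a corollary of the very proposition you are proving.
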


Before proving this proposition,
we show how it implies the proof of the main result of this section, Theorem~\ref{th:general_bridges}, 
showing that for any finite N-step set $S$,
the generating function of N-bridges is algebraic.

{
\begin{proof}[Proof of Theorem~\ref{th:general_bridges}]
    We start with the automaton from Proposition~\ref{th:automatic:type:of:sumsets}.
    By construction, each N-walk, read step-by-step, corresponds to a walk on this automaton such that the current state corresponds to the current type of the reachable points.
    Now, we associate to each state $q$ a generating function 
    \[
        F_q(x,y;t) = \sum_{n,i,j \geq 0} f^{(q)}_{i,j,n} x^i y^j t^n,
    \]
    where $f^{(q)}_{i,j,n}$ is the number of N-walks of length $n$ whose reachable points have type associated to~$q$, minimum $i$, and maximum $j$.
    As in the previous proofs of Theorems~\ref{theo:DyckNBridges} and \ref{theo:MotzkinNBridges} for the Dyck and Motzkin cases, it is now easy to build a characterizing system of equations for $F_q(x,y;t)$.
    This system is finite and therefore each generating function is rational. 
    
    It remains to extract N-bridges, \ie N-walks $w$ such that $0 \in \reach(w)$.
    We can do this now for each type individually. 
    For all types, it is necessary that $i \leq 0$ and $j \geq 0$.
    In terms of generating functions, this translates to
    \newcommand{\BgenA}{\tilde{B}}
    \begin{align*}
        \BgenA_q(x,y;t) := F_q(x,y;t) - [x^{>0}] F_q(x,y;t) - [y^{<0}] F_q(x,y;t).
    \end{align*}
    Indeed, the key observation is as before that $[x^{>0}y^{<0}] F_q(x,y;t)=0$, since the maximum can never by smaller than the minimum.
    Recall that the positive (and negative) part of a rational generating function is algebraic~\cite[Section 6]{Gessel80}. 
    Moreover, for $0 \in \reach(w)$ and a given type $\type g k \va \vb \vc$,
    we additionally require $i \in -\vb \mod g$, $i \notin -\va$, and $j \notin \vc$.
    First, we extract the arithmetic progressions of the minimum associated with $\vb$. This is a standard method, which we recall for completeness: 
    Let $\omega$ be a $g$th root of unity. Then,
    \newcommand{\BgenB}{\hat{B}}
    \begin{align*}
        \BgenB_q(x,y;t) := \sum_{b \in \vb} \frac{1}{g} \sum_{i=0}^{g-1} \omega^{i-b} \BgenA(\omega^{i} x,y;t).
    \end{align*}    
    Second, it remains to exclude the finite amount of values determined by $\va$ and $\vb$ to get the generating function $B_q(x,y;t)$ of bridges associated with state $q$ as
    \begin{align*}
        B_q(x,y;t) = \BgenB(x,y;t) - \sum_{a \in \va} [x^{-a}] \BgenB(x,y;t) - \sum_{c \in \vc} [y^c] \BgenB(x,y;t).
    \end{align*}
    Note that here it is important that the types are proper to avoid double counting.
    Finally, the generating function of bridges is the sum of the generating functions $B_q(x,y;t)$ over all states $q$.
    Algebraic functions are closed under a range of operations, including finite sums, derivatives, and substitutions of constants, among others; see~\cite[Section 6]{S01} and Section~\ref{sec:introalgebraic}.
\end{proof}
}

The rest of this section is dedicated
to the proof of Proposition~\ref{th:automatic:type:of:sumsets}.
Let $\sum_{\geq m} S$ denote the family
of sumsets containing at least $m$ occurrences
of each N-step from $S$
\[
    \sum_{\geq m} S
    =
    \Big\{
    \sum_{\vs \in S} k_{\vs} \times \vs
    \mid
    \ k_{\vs} \in \integers_{\geq m}
    \Big\}.
\]
The following lemma is the first key result to prove that finitely many types are sufficient. 
It shows that the reachable points of any N-walk after sufficiently many N-steps are, except for the bottom and the top, equivalent to a periodic integer interval.

\begin{lemma} \label{th:geqmS}
For any N-step set $S$,
there is an integer $m$
and a type $(g,0,\va,\{0\},\vc)$ such that
\[
    \sum_{\geq m} S \subseteq
    \type g 0 \va {\{0\}} \vc.
\]
\end{lemma}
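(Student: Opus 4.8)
The plan is to analyze the structure of $k\times\vs$ for a single N-step $\vs$ and large $k$, and then combine finitely many such contributions via the sumset. The key classical fact from additive combinatorics is that for a finite set $\vs\subset\integers$ with $\min(\vs)=0$ (which we may assume after a shift, absorbing the shift into the final "$+\{m\}$" freedom of the type), writing $d=\gcd(\vs)$ and $M=\max(\vs)$, the iterated sumset $k\times\vs$ eventually becomes a full arithmetic progression with common difference $d$ in its "middle range": there is a constant $c_\vs$ such that for all $k\ge c_\vs$,
\[
    k\times\vs \;=\; \bigl([0,\, kM]\cap d\integers\bigr)\ \usub\ \va_\vs\ \msub\ \vc_\vs
\]
for fixed finite sets $\va_\vs,\vc_\vs$ depending only on $\vs$ (the "missing" residues at the bottom and top that never get filled). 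This is essentially the statement that the Frobenius/numerical-semigroup gaps stabilize; I would prove it by a short pigeonhole/stabilization argument on the sequence of sets $(k\times\vs)\usub\va$ restricted to a window of size $2M$ at the bottom, noting the sequence is monotone (adding one more copy of $\vs$ can only add points), hence stabilizes, and symmetrically at the top.

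Next I would pass from a single step to the whole set $S=\{\vs_1,\dots,\vs_\ell\}$. For $k$ large, $\sum_{i} k_i\times\vs_i$ with each $k_i\ge k$ can be grouped as $\bigl(\sum_i (k\times\vs_i)\bigr) + (\text{extra copies})$; the sumset of the individual "stabilized" pieces is again of the same shape. Concretely, the sumset of a family of eventual-arithmetic-progressions is an eventual arithmetic progression with common difference $g:=\gcd(\vs_1\cup\cdots\cup\vs_\ell)$ (after shifting each $\vs_i$ so $\min(\vs_i)=0$; the shifts again get absorbed into the trailing $\{m\}$), once the total size is large enough; the residual finite defects at the bottom collapse to a single finite set $\va$ and at the top to a single finite set $\vc$. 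Choosing $m$ large enough that every $\sum_{\vs\in S}k_\vs\times\vs$ with all $k_\vs\ge m$ is beyond all these stabilization thresholds and has norm large enough that the bottom-defect window and top-defect window do not overlap, we get $\sum_{\geq m}S\subseteq \type{g}{0}{\va}{\{0\}}{\vc}$, which is exactly the claim with $n=g$, $k=0$, $\vb=\{0\}$.

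The main obstacle I anticipate is bookkeeping rather than conceptual: making precise that the bottom and top "defects" really do stabilize to fixed sets $\va,\vc$ \emph{uniformly} over all admissible multiplicity vectors $(k_\vs)_{\vs\in S}$ with $k_\vs\ge m$, and that they do not interfere (hence the single $\usub\va\msub\vc$ decomposition is valid). The cleanest way is probably to first handle $S$ a single step, then observe that $\sum_{\geq m}S$ for general $S$ is contained in $(\sum_{\vs\in S} m\times\vs) + \bigl(\sum_{\geq 0}S\bigr)$, apply the single-step result to the well-chosen "seed" $\sum_{\vs\in S}m\times\vs$ (which for $m$ large is itself a long arithmetic progression up to bounded defects), and note that sumsetting a long arithmetic progression of difference $g$ with anything in $\sum_{\geq 0}S$ only extends it without creating new interior gaps. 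One should also dispose of the trivial degenerate case $g=0$, i.e.\ every $\vs_i$ a singleton, where every element of $\sum_{\geq m}S$ is itself a singleton and lies in $\type{0}{0}{\emptyset}{\{0\}}{\emptyset}$ after a shift — though as stated the lemma allows $g\ge 1$, so one may either include $g=0$ as a boundary case or remark that singletons are handled separately in Proposition~\ref{th:type:of:sumsets}.
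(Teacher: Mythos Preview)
Your plan is correct in outline and would eventually succeed, but it takes a longer road than the paper does, and the detour is exactly where you (rightly) flag the bookkeeping difficulty.

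The paper's proof does \emph{not} first analyze $k\times\vs$ for a single N-step and then combine. Instead, after normalizing (shifting each N-step so its minimum is $0$, discarding singletons, and dividing out the gcd $g$), it applies Schur's Theorem/the Frobenius number directly to the \emph{union} $\bigcup_{\vs\in S''}\vs$. The set $\va$ is then defined once and for all as the complement in $\integers_{\ge 0}$ of the numerical semigroup generated by this union, and $\vc$ is defined symmetrically via conjugates. The crucial observation that makes uniformity over all multiplicity vectors automatic is that after shifting, $0$ belongs to every N-step; hence any point reachable by some sumset remains reachable when more copies are added, and conversely any semigroup element of height $\le m_0$ is already reached once each N-step is used at least $m_0$ times. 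So the bottom window of every $\vr\in\sum_{\ge m_0}S''$ is exactly $[0,F+M]\usub\va$, independent of the multiplicities. The middle being a full interval then follows by noting that the two defect-free windows of length $M$ near bottom and top merge after adding enough further steps.

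Your route---stabilize each $k\times\vs_i$ separately and then sum---forces you to argue that the individual defect sets $\va_{\vs_i},\vc_{\vs_i}$ combine coherently to fixed $\va,\vc$; this is precisely the uniformity worry you raise. Your ``cleanest way'' at the end (seed $=\sum_{\vs}m\times\vs = m\times\bigl(\sum_{\vs}\vs\bigr)$, then extend) actually collapses to the paper's argument once you observe that the numerical semigroup generated by $T:=\sum_{\vs}\vs$ coincides with the one generated by $\bigcup_{\vs}\vs$ (since $0\in$ each $\vs$ gives $T\supseteq\bigcup_\vs\vs$, and every element of $T$ is a sum of elements of the union). So there is no real need to pass through the single-step case at all: go straight to the Frobenius number of $\bigcup_\vs\vs$ and the uniformity issue evaporates.

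On the degenerate cases: the paper disposes of $S=\emptyset$ and $\emptyset\in S$ first, and handles singleton N-steps by absorbing them as pure shifts, so your remark about the $g=0$ boundary case is in the right spirit.
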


\begin{proof}
If $S$ is empty, we fix $m=0$ and have
\[
    \sum_{\geq m} S = \{\{0\}\},
\]
as the empty sumset is equal
to the neutral element $\{0\}$.
In that case, we choose
$g=0$ and $\va = \vc = \emptyset$.
If $S$ contains the N-step $\emptyset$, then $\sum_{\geq m} S = \{ \emptyset \}$ for all $m\geq1$.
Hence, we may choose $m = 1$, and again $g=0$ and $\va = \vc = \emptyset$.

Let us now assume $S$ does not contain $\emptyset$.
Let $S'$ denote the set of N-steps from $S$
of size at least $2$, shifted so that their minimum is $0$
\[
    S' =
    \{ \{x - \min(\vs) \mid x \in \vs\} \mid
    \vs \in S,\ |\vs| \geq 2
    \}.
\]
Any sumset of N-steps from $S$
can be rewritten as a shifted version
of a sumset of N-steps from~$S'$.
Indeed, consider such a sumset where
$(\vs_i)_{1 \leq i \leq n}$ denote the N-steps of size at least $2$,
and $(\{c_i\})_{1 \leq i \leq m}$ denote the N-steps of size $1$.
Then
\begin{align*}
    \vs_1 + \cdots + \vs_n + \{c_1\} + \cdots + \{c_m\} =\ &
    \{x - \min(\vs_1) \mid x \in \vs_1\} + 
    \cdots + \{x - \min(\vs_n) \mid x \in \vs_n\}
    \\&+
    \{c_1 + \cdots + c_m + \min(\vs_1) + \cdots + \min(\vs_n)\}.
\end{align*}
Since by definition types are stable by shift,
it is sufficient to prove the lemma for $S'$ instead of $S$.
Let $g$ denote the greatest common divisor of the union of the N-steps from $S'$.
We define another set $S''$ as the set of N-steps contracted by $g$:
\[
    S'' = \left\{ \left\{ \frac{x}{g} \mid x \in \vs \right\} \mid \vs \in S' \right\}.
\]
Any sumset of N-steps from $S'$ is obtained
from the corresponding sumset of N-steps from $S''$
by dilation by $g$.
Thus, if we prove the lemma for $S''$,
obtaining a type $(1, k, \va, \{0\}, \vc)$,
then the lemma holds for $S'$ with type
$(g, k, \{g x \mid x \in \va\}, \{0\}, \{g x \mid x \in \vc\})$.

Let $\va$ denote the points that cannot be obtained
as nonnegative linear combinations of elements from the N-steps
\[
    \va =
    \integers_{\geq 0}
    \setminus
    \bigg\{
        \sum_{t \, \in \!\! \bigcup\limits_{\vs \in S''} \! \vs} k_t \, t
        \mid k_t \in \integers_{\geq 0}
    \bigg\}.
\]
According to Schur's Theorem \cite{alfonsin2005diophantine}, for any set $T$
of positive integers of greatest common divisor~$1$, there is a smallest number $F_T \in \integers_{\geq 0}$,
called the \emph{Frobenius number}
such that for any $n > F_T$,
there exist nonnegative integers $(k_t)_{t \in T}$ such that
\[
    n = \sum_{t \in T} k_t \, t.
\]
Let $F$ denote the Frobenius number
for $\bigcup_{\vs \in S''} \vs$.
Schur's Theorem implies that $\va$ is a finite set,
with maximum $F$.
Let also $\conjugate{S''}$ denote the family
of conjugate N-steps from $S''$
\[
    \conjugate{S''} =
    \{
    \conjugate{\vs} \mid \vs \in S''
    \},
\]
and $\conjugate{F}$ denote the Frobenius numbers for
$\bigcup_{\vs \in S''} \conjugate{\vs}$.
The integer set $\vc$ is defined like $\va$, but for $\conjugate{S''}$
instead of $S''$.
The maximum of $\vc$ is equal to $\conjugate{F}$.
Moreover, let $M$ and $m_0$ denote the integers
\[
    M = \max_{\vs \in S''} \max_{x \in \vs} x
    \quad \text{ and } \quad
    m_0 = F + \conjugate{F} + M.
\]
The reason for the definition of $M$
is that for any $n \geq M$, any interval $[a,b]$ of length $n$,
and any N-step $\vs \in S''$, it holds that $[a,b] + \vs$ is still an interval now
of length $n + \|\vs\|$.
By definition of $\va$,
the integers from $[0, F + M]$ that belong
to a sumset of N-steps from $S''$ are
\[
    [0, F + M] \usub \va.
\]
Those points $y$ are characterized by the existence of
nonnegative integers $(k_t)_{t \in \bigcup_{\vs \in S''} \vs}$
such that
\[
    y = \sum_{t \in \bigcup_{\vs \in S''} \vs} k_t \, t.
\]
We have
\[
    m_0 \geq y \geq \sum_{t \in \bigcup_{\vs \in S''} \vs} k_t.
\]
Now, note that as all N-steps in $S''$ contain $0$, it holds that once $y \in \vr$ we have $y \in \vr + \vs$ for all $\vs \in S''$.
Thus, $y$ is in particular reached by $m_0 \times \sum_{\vs \in S''} \vs$.
We conclude that the bottom of any element $\vr \in \sum_{\geq m_0} S''$
contains all integers, except the elements from $\va$:
\[
    \vr \cap [0, F + M]
    =
    [0, F + M] \usub \va.
\]
Looking at the conjugates, we obtain
\[
    \vr \cap [\max(\vr) - \conjugate{F} - M, \max(\vr)] =
    \left( [\max(\vr) - \conjugate{F} - M, \max(\vr)] \right) \msub \vc.
\]
Since $\max(\va) < F$ and $\max(\vc) < \conjugate{F}$,
the integer set $\vr = m_0 \times \sum_{\vs \in S''} \vs$
contains the following two (possibly overlapping) intervals
of length $M$: 
$[F+1, F+M]$
and
$[\max(\vr) - \conjugate{F} - M, \max(\vr) - \conjugate{F} - 1]$.
Note that if they overlap, then the claim follows. 
Otherwise, we show now how to extend these intervals until they overlap.

Let $\vt_1, \vt_2 \in \integersets$ be two integer sets.
If $\min(\vt_1) = 0$ and $\vt_2$ contains
an interval $[u, v]$ with $v - u \geq \max(\vt_2)$,
then $\vt_1 + \vt_2$ contains the interval $[u, v + \max(\vt_2)]$.
By our choice of $M$, this holds for the previous two intervals. 
Recursively, we deduce that
for any $\vs_1, \ldots, \vs_n \in S''$,
the sumset $\vr + \vs_1 + \cdots + \vs_n$ contains
the intervals
$[F+1, F + M + \sum_{i=1}^n \max(\vs_i)]$
and
$[\max(\vr) - \conjugate{F} - M, \max(\vr) + \sum_{i=1}^n \max(\vs_i) - \conjugate{F} - 1]$.
When $\sum_{i=1}^n \max(\vs_i)$ is large enough,
those two intervals overlap.
Specifically, consider the sumset
$\vr = m_0 \times \sum_{\vs \in S''} \vs$
and set $m = m_0 + \max(\vr)$.
For any $\vt \in \sum_{\geq m} S''$, we have
$\vt = \vr + \vs$ for some $\vs \in \sum_{\geq \max(\vr)} S''$,
thus satisfying $\max(\vs) \geq \max(\vr)$.
Therefore, $\vt$ contains the full interval
\[
    [F+1, \max(\vt) - \conjugate{F} - 1],
\] 
and $\vt$ has type $(1, 0, \va, {\{0\}}, \vc)$,
concluding the proof.
\end{proof}

Next we show that types are closed under sumsets with finite integer sets. 
For this purpose, we extend the sumset to sets of sets as follows:
For $\mathcal{R} \subseteq \integersets$ and $\vs \in \integersets$ we define
\begin{align*}
    \mathcal{R} + \vs := \{ \vr + \vs \mid \vr \in \mathcal{R} \}.
\end{align*}

\begin{lemma} \label{th:sumset:to:type}
For any integer $g$ and finite integer sets $\va$, $\vb$, $\vc$, $\vs$,
there exist an integer $k$ and a type $(g,k',\va',\vb',\vc')$
such that
\[
    \type g k \va \vb \vc + \vs = 
    \type g {k'} {\va'} {\vb'} {\vc'}.
\]
\end{lemma}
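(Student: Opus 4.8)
The plan is to reduce to a \emph{proper} type on the left and then track, one piece at a time, how the three constituents of such a type—a fixed bottom block, a $g$-periodic middle, and a fixed top block—are transformed by the sumset with $\vs$. The only genuinely delicate point is that adding $\vs$ does not commute with the prunings $\usub$ and $\msub$, so one cannot simply replace $\vb$ by $\vb+\vs$.

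First I would dispose of the degenerate cases. Since types are invariant under shifts I may assume $\min(\vs)=0$, \ie $\vs\subseteq[0,\|\vs\|]$. If $\vs=\emptyset$ or $\vb=\emptyset$ then $\type g k\va\vb\vc+\vs=\{\emptyset\}=\type g k\emptyset\emptyset\emptyset$; if $g=0$ then $\type 0k\va\vb\vc=\{\vd+\{m\}\mid m\in\integers\}$ with $\vd:=\vb\usub\va\msub\vc$ a single finite set, so the sumset equals $\{(\vd+\vs)+\{m\}\mid m\in\integers\}$, again a type of modulus $0$. From now on $g>0$ and $\vb,\vs\neq\emptyset$; by the reduction to proper types (Lemma~\ref{th:to:proper}), after possibly enlarging $k$, I may assume $(g,k,\va,\vb,\vc)$ is proper, hence $\min\vb=0$, $\vb\subseteq[0,g-1]$, $\va,\vc\subseteq\integers_{\geq 0}$ and $kg>\max(\va)+\max(\vc)$.

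For $j\geq k$ put $R_j:=(j\times\{0,g\}+\vb)\usub\va\msub\vc$, so that $\type g k\va\vb\vc=\{R_j+\{m\}\mid j\geq k,\ m\in\integers\}$. By properness $j\times\{0,g\}+\vb=\{ag+b\mid 0\leq a\leq j,\ b\in\vb\}$ agrees with the $g$-periodic set $\{x\geq 0\mid x\bmod g\in\vb\}$ outside a bottom and a top block of bounded length, and $kg>\max(\va)+\max(\vc)$ guarantees that the two prunings stay inside those blocks without meeting; thus $R_j$ is a $j$-independent bottom block, then a clean $g$-periodic middle with pattern $\vb$, then a $j$-independent top block, with $\max(R_j)$ equal to $jg$ plus a constant. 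Now I would show $R_j+\vs$ has the same shape, still with modulus $g$. For $x$ in the middle range, $x\in R_j+\vs$ iff $x-s\in R_j$ for some $s\in\vs$; since $x-s$ then lies in the clean periodic part, this holds iff $x\bmod g\in(\vb+\vs)\bmod g=:\vb'$, so the middle of $R_j+\vs$ is $g$-periodic with the (possibly denser) pattern $\vb'\subseteq[0,g-1]$, $0\in\vb'$. Because every element of $\vs$ is nonnegative, $(R_j+\vs)\cap[0,L]=\bigl((R_j\cap[0,L])+\vs\bigr)\cap[0,L]$, which stabilises for $j\geq k$ once $L$ exceeds the bottom block of $R_j$ plus $\|\vs\|$; this gives a $j$-independent bottom block for $R_j+\vs$, and applying the same argument to the conjugates, using $\conjugate{A+\vs}=\conjugate A+\conjugate{\vs}$, gives a $j$-independent top block.

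Finally, $\max(R_j+\vs)=\max(R_j)+\max(\vs)$ is $jg$ plus a constant and so increases by exactly $g$ per unit of $j$; letting $j'$ be the least integer with $j'g+\max(\vb')\geq\max(R_j+\vs)$ yields $j'=j+c$ for a fixed integer $c$ and a fixed gap $\delta:=j'g+\max(\vb')-\max(R_j+\vs)\in[0,g)$, while $j'\times\{0,g\}+\vb'=\{x\geq 0\mid x\bmod g\in\vb'\}\cap[0,j'g+\max(\vb')]$. Recording which elements of this truncated periodic set are absent from $R_j+\vs$ near the bottom, and near the top (the latter including the $\delta$ elements lying above $\max(R_j+\vs)$), defines finite sets $\va',\vc'\subseteq\integers_{\geq 0}$ with $0\notin\va'$, independent of $j$ once the bottom and top blocks are disjoint, \ie for all $j$ past a threshold; then $R_j+\vs=(j'\times\{0,g\}+\vb')\usub\va'\msub\vc'$ for all such $j$. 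Taking $k$ beyond that threshold and large enough that $k':=k+c\geq 0$ and $k'g>\max(\va')+\max(\vc')$, so that $(g,k',\va',\vb',\vc')$ is proper, gives $\type g k\va\vb\vc+\vs=\{(R_j+\vs)+\{m\}\mid j\geq k,\ m\in\integers\}=\type g{k'}{\va'}{\vb'}{\vc'}$. The main obstacle is exactly the non-commutation flagged at the start: $(\vr\usub\va\msub\vc)+\vs$ is not a naive pruning of $\vr+\vs$, since $\vs$ can refill removed positions; the resolution is that for $j$ large this refilling, like the original defects, is confined to bounded windows at the two ends, so that in the bulk the only change is the passage from $\vb$ to $(\vb+\vs)\bmod g$, the rest being the bookkeeping of those two windows and of the index shift $j\mapsto j+c$.
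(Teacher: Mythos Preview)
Your proposal is correct and rests on the same core observation as the paper: for $j$ large, $R_j+\vs$ is $g$-periodic away from two bounded end-windows, and those windows are $j$-independent, hence can be recorded as new prunings $\va',\vc'$.

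Where you diverge is in packaging. You first reduce the input type to a proper one (via Lemma~\ref{th:to:proper}, which in the paper is stated \emph{after} the present lemma but does not depend on it, so no circularity), then set $\vb'=(\vb+\vs)\bmod g$, which forces the index shift $j\mapsto j+c$ and the bookkeeping about the gap $\delta$. The paper sidesteps all of this by taking $\vb'=\vb+\vs$ \emph{unreduced}: then $j'=j$, $k'=k$, and the only work is to identify $\va'$ and $\vc'$ on the two end-intervals $\Ijbottom$ and $\Ijtop$; the reduction to a proper type is deferred entirely to Lemma~\ref{th:to:proper}. Your route buys a proper output type immediately; the paper's is shorter and avoids the forward reference.

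One small slip: your parenthetical claim ``$0\notin\va'$'' need not hold. If $0\in\va$ in the (proper) input type then $0\notin R_j$, hence $0\notin R_j+\vs$ since $\min(\vs)=0$, and so $0\in\va'$. This claim is never used in your argument and can simply be dropped.
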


\begin{proof}
If $g=0$, then for $\vr = \vb \usub \va \msub \vc$ we have
\[
    \type g k \va \vb \vc =
    \{ \vr + \{m\} \mid m \in \integers \}.
\]
We then choose $k=0$ and the type
\[
    (g,k',\va',\vb',\vc') =
    (0, 0, \emptyset, \vr + \vs, \emptyset).
\]

Let us now assume $g > 0$.
Both sides of the equality of the lemma
are stable by shift,
so we assume without loss of generality
$\min(\vs) = 0$ and $\min(\vb) = 0$.
We say that an integer set $\vt$ is $g$-periodic
on an interval $[\alpha, \beta]$
if for all $x, y \in [\alpha, \beta]$
with $x - y \equiv 0 \mod g$, we have
$x \in \vt$ if and only if $y \in \vt$.
Let $k$ be an integer satisfying
\[
    k g \geq 2 + \max(\va) + \max(\vb) + \max(\vc) + \max(\vs).
\]
Let $\vr_j$ denote the integer set
\[
    \vr_j =
    j \times \{0, g\} + \vb \usub \va \msub \vc,
\]
so that
\[
    \type g k \va \vb \vc =
    \{ \vr_j + \{m\} \mid j \geq k,\ m \in \integers \}.
\]
The integer set
$j \times \{0, g\}$ is $g$-periodic
on the interval $[0, j g]$.
Note in general that for any $\vt$ and $\vb$ with $\min(\vb)=0$, to decide whether an integer $x \in \vt + \vb$,
one only needs to consider if
\[
    x \in (\vt \cap [x - \max(\vb), x]) + \vb.
\]
Thus, $j \times \{0, g\} + \vb$
is $g$-periodic on the interval $[\max(\vb), j g]$.
Observe that on the interval
$[\max(\va) + \max(\vb) + 1, j g - \max(\vc) - 1]$,
the integer sets $r_j$ and $j \times \{0, g\} + \vb$ are equal,
so $r_j$ is $g$-periodic on this interval.
Applying the same reasoning,
$\vr_j + \vs$
is $g$-periodic on the interval $I_j$ defined as
\[
    I_j =
    [\max(\va) + \max(\vb) + \max(\vs) + 1, j g - \max(\vc) - 1].
\]
The integer $k$ has been chosen to ensure
this interval is non-empty whenever $j \geq k$.
On this interval, $\vr_j + \vs$ is equal to
\[
    I_j \cap (\vr_j + \vs)
    =
    I_j \cap (j \times \{0, g\} + \vb + \vs)
\]
because the bottom pruning by $\va$
and the top pruning by $\vc$ do not overlap.
For any integer sets $\vt \subseteq \vr$, we have
\[
    \vt + \vs \subseteq \vr + \vs.
\]
This implies
\[
    \vr_j + \vs
    \subseteq
    j \times \{0, g\} + \vb + \vs.
\]
Let $\Ijbottom$ and $\Ijtop$
denote the intervals
\begin{align*}
    \Ijbottom &=
    [0, \max(\va) + \max(\vb) + \max(\vs)],
    \\
    \Ijtop &=
    [j g - \max(\vc), j g + \max(\vb) + \max(\vs)],
\end{align*}
so that $\Ijbottom$, $I_j$, and $\Ijtop$ form a partition
of the interval $[0, \max(\vr_j + \vs)]$.
There exists
$\va' \subseteq [0, \max(\va) + \max(\vb) + \max(\vs)]$,
independent of $j$, such that
\[
    \Ijbottom
    \cap
    (\vr_j + \vs)
    =
    \Ijbottom
    \cap
    (j \times \{0, g\} + \vb + \vs \usub \va').
\]
Similarly, working on the conjugates,
there exists $\vc' \subseteq [0, \max(\vc) + \max(\vb) + \max(\vs)]$
independent of $j$ such that
\[
    \Ijtop
    \cap
    (\vr_j + \vs)
    =
    \Ijtop
    \cap
    (j \times \{0, g\} + \vb + \vs \msub \vc').
\]
Collecting the results about $\Ijbottom$, $I_j$, and $\Ijtop$,
we obtain
\[
    \vr_j + \vs =
    j \times \{0, g\} + \vb + \vs \usub \va' \msub \vc'.
\]
Thus
\[
    \{\vr_j + \vs \mid j \geq k\}
    =
    \type g k {\va'} {\vb + \vs} {\vc'}. \qedhere
\]
\end{proof}

We need one last technical lemma, stating that for $k$ large enough, each type can be chosen to be proper.

\begin{lemma} \label{th:to:proper}
For any $g$, $\va$, $\vb$, $\vc$,
there exist an integer $k$
and a proper type $(g, k', \va', \vb', \vc')$ such that
\[
    \type g k \va \vb \vc =
    \type g {k'} {\va'} {\vb'} {\vc'}.
\]
For any $j \geq k$, there exists $j'$ such that
$\type g j \va \vb \vc = \type g {j'} {\va'} {\vb'} {\vc'}$.
\end{lemma}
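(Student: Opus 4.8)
We first dispose of the trivial sub-cases in which the type equals $\{\emptyset\}$ (namely $\vb=\emptyset$ if $g>0$, or $\vb\usub\va\msub\vc=\emptyset$ if $g=0$), for which any representation works. The plan is to normalize $(g,k,\va,\vb,\vc)$ to a proper type in several stages, using the shift and truncation identities for types listed just before Lemma~\ref{th:geqmS}. If $g=0$ this is immediate: $\type 0 k\va\vb\vc=\type 0 0\emptyset{(\vb\usub\va\msub\vc)}\emptyset$, and shifting the middle set gives $\type 0 0\emptyset{\vb^{\ast}}\emptyset$ with $\vb^{\ast}=(\vb\usub\va\msub\vc)-\{\min(\vb\usub\va\msub\vc)\}$, a proper type; since the type is independent of $k$ when $n=0$, the last assertion holds with $j'=0$. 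So assume $g>0$. By $\type g k\va{\vb+\{m\}}\vc=\type g k\va\vb\vc$ we may take $\min(\vb)=0$, and by replacing $\va,\vc$ with $\va\cap\integers_{\geq 0}$, $\vc\cap\integers_{\geq 0}$ (negative entries never affect a bottom or a top pruning) we may take $\min(\va),\min(\vc)\geq 0$. Put $\vb^{\ast}=\{x\bmod g\mid x\in\vb\}\subseteq[0,g-1]$; as $0\in\vb$ we have $0\in\vb^{\ast}$, so $\min(\vb^{\ast})=0$.

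The heart of the matter is the following \emph{stabilization} statement: there exist $k_0\in\integers_{\geq 0}$, a constant $c\in\integers_{\geq 0}$, and finite sets $\va'',\vc''\subseteq\integers_{\geq 0}$, all depending only on $(g,\va,\vb,\vc)$, such that
\[
    \big((j\times\{0,g\}+\vb)\usub\va\big)\msub\vc
    \;=\;
    \big(((j+c)\times\{0,g\}+\vb^{\ast})\usub\va''\big)\msub\vc''
    \qquad\text{for all }j\geq k_0.
\]
Granting this, choose any $k\geq k_0$ also large enough that $(k+c)g>\max(\va'')+\max(\vc'')$ and set $k'=k+c$. Then $\type g k\va\vb\vc=\type g{k'}{\va''}{\vb^{\ast}}{\vc''}$, and this type is proper, since $\min(\vb^{\ast})=0$, $\vb^{\ast}\subseteq[0,g-1]$, $\min(\va''),\min(\vc'')\geq 0$, and $k'g>\max(\va'')+\max(\vc'')$. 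The displayed identity holds verbatim for every $j\geq k$, which yields the final assertion with $j'=j+c$ (again proper, as $(j+c)g\geq(k+c)g>\max(\va'')+\max(\vc'')$).

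To prove the stabilization statement, fix $j$ large and analyse $j\times\{0,g\}+\vb$ on three consecutive windows covering $[0,jg+\max(\vb)]$. On the middle interval $[\max(\vb),jg]$ one checks directly that $x\in j\times\{0,g\}+\vb$ if and only if $x\bmod g\in\vb^{\ast}$ — which is exactly the behaviour of $(j+c)\times\{0,g\}+\vb^{\ast}$ there. On the bottom window $[0,\max(\vb)]$ the set equals a $j$-independent finite set $D$, and on the top window $[jg,jg+\max(\vb)]$ it equals $jg$ plus a $j$-independent finite set $T$. Since every element of $D$ and of $T$ has residue in $\vb^{\ast}$, these windows are obtained from the corresponding windows of $(j+c)\times\{0,g\}+\vb^{\ast}$ by deleting finitely many elements, uniformly in $j$; choosing $c$ large enough that the top window of the latter covers that of $j\times\{0,g\}+\vb$, one realizes all these deletions by one bottom pruning by a fixed set $\va'$ (with $0\notin\va'$) followed by one top pruning by a fixed set $\vc'$, so that $j\times\{0,g\}+\vb=((j+c)\times\{0,g\}+\vb^{\ast})\usub\va'\msub\vc'$ for all large $j$. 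Post-composing with the outer prunings by $\va$ and by $\vc$: for large $j$ these act on regions disjoint from those of $\vc'$ and $\va'$ respectively, so two successive bottom (\resp top) prunings of a set with fixed bottom (\resp top) structure collapse into a single one, which produces the desired $\va''$ and $\vc''$. This argument runs parallel to, and partly reuses, the proof of Lemma~\ref{th:sumset:to:type}.

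The main obstacle is the stabilization statement, and within it the top-window bookkeeping. Because $\max(\vb)-\max(\vb^{\ast})$ need not be divisible by $g$, one cannot simply align the largest elements of $j\times\{0,g\}+\vb$ and $(j+c)\times\{0,g\}+\vb^{\ast}$; the fix is to over-shoot the length by $c$ and absorb the surplus into $\vc'$. One then has to check, crucially using the largeness of $j$, that the bottom prunings only touch an initial window, the top prunings only a terminal window, and the $g$-periodic middle is untouched, so that all the prunings involved commute and compose exactly as claimed.
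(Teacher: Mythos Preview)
Your proof is correct and follows essentially the same approach as the paper: reduce $\vb$ to its residues modulo $g$ (your $\vb^{\ast}$ is the paper's $\vb'$), use $g$-periodicity on the middle window, and absorb the boundary discrepancies into fixed bottom and top prunings that are then merged with the given $\va$ and $\vc$. Your three-window analysis and the explicit handling of the shift $c$ are slightly more detailed than the paper's version, but the strategy and the key computations are the same.
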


\begin{proof}
Since for any $m \in \integers$,
we have
\[
    \type g k \va \vb \vc =
    \type g k
    {\va \cap \integers_{\geq 0}}
    {\vb + \{m\}}
    {\vc \cap \integers_{\geq 0}},
\]
we can assume without loss of generality $\min(\vb) = 0$,
$\min(\va) \geq 0$, and $\min(\vc) \geq 0$.
If $g=0$ or $\max(\vb) < g$, then the type is already proper.
Otherwise, let $g \integers$ denote the set $\{g m \mid m \in \integers\}$.
Let
\[
    \vb' =
    (g \integers + \vb)
    \cap
    [0, g-1].
\]
By periodicity, we have
\[
    g \integers + \vb =
    g \integers + \vb'.
\]
Thus, for all $j \geq 0$,
\[
    j \times \{0,g\} + \vb
    \subseteq
    g \integers + \vb
    =
    g \integers + \vb',
\]
and, more precisely,
\[
    j \times \{0,g\} + \vb
    \subseteq
    (j + \lceil \max(\vb) / g \rceil) \times \{0,g\} + \vb'.
\]
We fix $k$ large enough
to ensure both $k > \lceil \max(\vb) / g \rceil$
and $\max(\va) + \max(\vc) < k g$.
The first constraint ensures
that for any $j \geq k$,
we have
\[
    (j \times \{0,g\} + \vb)
    \cap
    [0, \max(\vb)]
    =
    (k \times \{0,g\} + \vb)
    \cap
    [0, \max(\vb)],
\]
because all the points from $\{j g\} + \vb$
are greater than $\max(\vb)$.
The second constraint ensures that
the bottom and top pruning do not overlap.
We also define
\[
    \va'' =
    \left(
    (k \times \{0,g\} + \vb')
    \setminus
    (k \times \{0,g\} + \vb)
    \right)
    \cap
    [0, \max(\vb)]
\]
and the same on the conjugate for $\vc''$.
This ensures for all $j \geq k$
\[
    j \times \{0,g\} + \vb
    =
    (j + \max(\vb) / g) \times \{0,g\} + \vb' \usub \va'' \msub \vc''.
\]
Since the bottom and top pruning do not overlap,
for all $j \geq k$
\[
    j \times \{0,g\} + \vb \usub \va \msub \vc
    =
    (j + \max(\vb) / g) \times \{0,g\} + \vb' \usub \va'' \usub \va \msub \vc'' \msub \vc,
\]
so, combining the two bottom prunings and the two top prunings,
there exist $\va'$ and $\vc'$
such that 
the claim holds.
\end{proof}

After these technical lemmas, we are now ready to prove our main result.

\begin{proof}[Proof of Proposition~\ref{th:automatic:type:of:sumsets}]
Consider a finite family $S = \{\vs_1, \ldots, \vs_n\}$ of N-steps,
and let $S'$ denote the subset containing
all N-steps of size at least $2$.
We first build an automaton $\mA'$
like in the proposition
except that the types associated to the states need not be proper,
and that $S$ is replaced by $S'$.
Then we will build from it an automaton $\mA$ with proper types,
and finally add the N-steps of size $0$ and $1$.

We will first build the states of the automaton $\mA'$.
To each state $q$, we associate a subset $T_q$ of $S'$,
a type $(g_q, k_q, \va_q, \vb_q, \vc_q)$
and integers $(u_{q,\vt})_{\vt \in S' \setminus T_q}$
indexed by $S' \setminus T_q$.
Informally, the idea is that any sumset $\vs$ of N-steps from $S'$
where each N-step from $T_q$ appears a
``\emph{large enough number of times}'',
and each N-step $\vt$ from $S' \setminus T_q$
appears exactly $u_{q,\vt}$ times,
will have type $(g_q, k_q, \va_q, \vb_q, \vc_q)$.
This \emph{large enough number of times}
means here greater than some integer $m(T_q)$
that depends only on $T_q$.

To each subset $T \subseteq S'$,
we associate an integer $m(T)$
and we build several states of the automaton.
The transitions of the automaton will be added later.
We consider all subsets of $S'$ in an order
such that if $T \subseteq T'$,
the subset $T'$ is considered before $T$.
The integer $m(T)$ is defined
as the smallest integer satisfying
the following three lower bounds.
\begin{itemize}
\item[\rone.]
Let $m$, $g$, $\va$, $\vc$ denote the integers and integer sets
defined in Lemma~\ref{th:geqmS} for $S$ replaced by~$T$, so that
\[
    \sum_{\geq m} T
    \subseteq
    \type g 0 \va {\{0\}} \vc.
\]
We impose $m(T) \geq m$.
\item[\rtwo.]
For any $\vt \in S' \setminus T$,
we impose $m(T) \geq m(T \cup \{\vt\})$.
Because of the order of visit of the subsets,
$m(T \cup \{\vt\})$ is well defined
at the time we consider $T$.
\item[\rthree.]
For each $\vt \in S' \setminus T$ and any integer $u_{\vt} \in [0, m(T \cup \{\vt\}) - 1]$,
we consider the integer set
\[
    \vs =
    \sum_{\vt \in S' \setminus T}
    u_{\vt} \times \vt.
\]
Let $k$, $k'$, $\va'$, $\vb'$, $\vc'$
be defined as in Lemma~\ref{th:sumset:to:type}
so that
\[
     \type g k \va {\{0\}} \vc + \vs
    =
    \type g {k'} {\va'} {\vb'} {\vc'}.
\]
We impose
\[
    m(T) \geq
    \frac{k g}{\min_{\vt \in T}(\|\vt\|)}.
\]
A new state $q$ of the automaton $\mA'$ is created.
We associate to it
the subset $T_q = T$,
integers $(u_{q, \vt})_{\vt \in S'\setminus T_q} =
(u_{\vt})_{\vt \in S' \setminus T_q}$
indexed by $S' \setminus T_q$,
and the type
$(g_q, k_q, \va_q, \vb_q, \vc_q) =
(g, k', \va', \vb', \vc')$.
\end{itemize}
The initial state $q_0$ of the automaton $\mA'$
is the one corresponding to
$T_{q_0} = \emptyset$
and $u_{q_0,\vs} = 0$ for all $\vs \in S'$.

Let us now add transitions to the automaton,
ensuring it is complete.
For each state $q$ and each N-step $\vs \in S'$,
we add the following transition from $q$,
labeled by $\vs$.
\begin{itemize}
\item
If $\vs \in T_q$, the transition is a loop and points to $q$.
\item
If $\vs \in S' \setminus T_q$
and $u_{q,\vs} < m(T \cup \{\vs\}) - 1$,
then when $q$ was created, 
another state $q'$ was created corresponding to
$T_{q'} = T_q$
and $(u_{q', \vt})_{\vt \in S' \setminus T_q} = (u_{q, \vt} + \indic_{\vt = \vs})_{\vt \in S' \setminus T_q}$.
Then, the transition points to $q'$.
\item
If $\vs \in S' \setminus T_q$
and $u_{q,\vs} = m(T \cup \{\vs\}) - 1$,
we consider a set $T'$, maximal for the inclusion,
such that $T \subseteq T'$ and
for all $\vt \in T'$,
we have $u_{q,\vt} + \indic_{\vt = \vs} \geq m(T')$.
The maximality of $T'$ implies
$u_{q,\vt} < m(T')$ for all $\vt \in S' \setminus T'$.
Furthermore, step \rtwo ensures $m(T) \geq m(T')$,
so when considering $T'$, at step \rthree,
a state $q'$ was created satisfying
$T_q = T'$ and $u_{q',\vt} = u_{q,\vt}$
for all $\vt \in S' \setminus T'$.
In the automaton $\mA'$,
the transition from $q$ labeled by $\vs$ points to $q'$.
\end{itemize}
Except for loops, this automaton has no cycles.
Indeed, the non-loop transitions
induce the following order on the states.
Write $q \leq q'$ if
$T_q$ is a strict subset of $T_{q'}$,
or if $T_q = T_{q'}$ and
for all $\vs \in S' \setminus T_q$, we have
$u_{q,\vs} \leq u_{q',\vs}$.

We associate to each state $q$ of the automaton $\mA'$
the subset $U_q$ of $\integers_{\geq 0}^{|S'|}$
\[
    U_q =
    \{(u_{\vs})_{\vs \in S'} \mid
    u_{\vs} \geq m(T_q)
    \text{ if } \vs \in T_q,
    \ 
    u_{\vs} = u_{q, \vs}
    \text{ if } \vs \in S' \setminus T_q
    \}.
\]
By construction, for any transition labeled $\vs$
from a state $q$ to a state $q'$, we have
\begin{itemize}
\item
$
    \{ u_{\vt} + \indic_{\vt = \vs}
    \mid
    (u_{\vt})_{\vt \in S'} \in U_q \}
    \subseteq U_{q'},
$
\item
for any N-walk $(w_1, \ldots, w_n)$ of N-steps from $S'$,
containing $u_{\vs}$ occurrences of the N-step $\vs$
for all $\vs \in S'$,
the state $q$ reached in the automaton $\mA'$
after reading $(w_1, \ldots, w_n)$ satisfies
$(u_{\vs})_{\vs \in S'} \in U_q$.
\end{itemize}

Let $f$ denote the surjective function
from $\integers_{\geq 0}^{|S'|}$ to $(S',+)$
defined as
\[
    f((u_{\vs})_{\vs \in S'}) =
    \sum_{\vs \in S'} u_{\vs} \times \vs.
\]
We now prove that for any state $q$,
we have $f(U_q) \subset \type {g_q} {k_q} {\va_q} {\vb_q} {\vc_q}$.
By definition of $U_q$,
any integer set in $f(U_q)$ can be written as $\vr + \vs$
for some
$
    \vr \in \sum_{\geq m(T_q)}
$
and
$
    \vs = \sum_{\vt \in S' \setminus T_q} u_{q,\vt} \times \vt.
$
Step \rone ensures
$\vr \in \type {g_q} 0 \va {\{0\}} \vc$
for some integer sets $\va$, $\vc$.
Step \rthree ensures additionally
$\|\vr\| \geq k_q g_q$,
so in fact
$\vr$ belongs to $\type {g_q} {k_q} \va {\{0\}} \vc$.
By step \rthree again, this implies
$\vr + \vs \in
\type {g_q} {k_q} {\va_q} {\vb_q} {\vc_q}$.

Let us now gather the results of this proof.
Consider an N-walk $w = (\vs_1, \ldots, \vs_m)$
on the N-step family $S'$
and the state $q$ reached in the automaton $\mA'$
after reading $w$.
Let $u_{\vs}$ denotes the number of occurrences
of the N-step $\vs$ in $w$.
Then we have
$(u_{\vs})_{\vs \in S'} \in U_q$,
which implies
$\vs_1 + \cdots + \vs_m \in \type {g_q} {k_q} {\va_q} {\vb_q} {\vc_q}$.

In the automaton $\mA'$, the type associated to each state
is not necessarily proper.
Our next step is to build from $\mA'$
an automaton $\mA$ consisting only of proper types.
Lemma~\ref{th:to:proper} allows us to translate
each type $(g,k,\va,\vb,\vc)$ into a proper one,
provided that $k$ is large enough.
We consider all types $(g_q,k_q,\va_q,\vb_q,\vc_q)$
corresponding to states $q$ of the automaton $\mA'$,
and a value~$K$ large enough
so that Lemma~\ref{th:to:proper} is applicable to each type
$(g_q,K,\va_q,\vb_q,\vc_q)$.
We start with $\mA$ as a copy of $\mA'$.
Now let us cut all transitions leaving the initial state
in the automaton $\mA$.
Consider an integer $L$, fixed later in the proof.
For each N-walk on $S'$ of length at most $L$
with reachable points~$\vs$,
we consider the type $(0,0,\emptyset,\vs,\emptyset)$.
We create a new state corresponding to this type.
We add all transitions from the initial state to those
newly created states,
as well as transitions between them.
Consider a N-walk $w$ on $S'$ of length exactly $L$,
and the corresponding new state~$q$.
Reading~$w$ in the automaton $\mA'$ ends in some state $q'$.
We copy all transitions leaving $q'$
as transitions leaving $q$.
Let us choose $L = \max(g_q K + \|\vb_q\|)$
for all states $q$ of $\mA'$.
We update the value $k_q$ of each state $q$ to $K$.
Now any N-walk reaching in $\mA$ a state $q$ that existed in $\mA'$
has length at least $L$,
so the norm of its reachable points is at least $L$
(because all N-steps in $S'$ have size at least $2$).
By construction of $\mA'$,
the reachable points $\vr$ of $w$ have the form
\[
    j \times \{0,g_q\} + \vb \usub \va \msub \vc + \{m\}
\]
for some $j$ and $m$.
Because the norm of $\vr$ is
at least $g_q K + \|\vb_q\|$,
we have $j \geq K$,
so $\vr$ is indeed in type $(g_q,K,\va_q,\vb_q,\vc_q)$.
This proves that our new automaton $\mA$,
where all types are proper,
still matches N-walks to types
corresponding to their reachable points.

Finally, we extend the automaton $\mA$ to work with all N-steps $S$ (instead of only $S'$)
by introducing one by one N-steps of size $0$ or $1$,
as follows.
Assume $|\vs| = 1$, then the sumset with $\vs$ is a shift.
Since types are stable by shifts,
the automaton for $S' \cup \{\vs\}$
is obtained from $\mA'$ by adding loops labeled by $\vs$ on each state.
Since for any $\vb$, we have $\vb + \emptyset = \emptyset$,
inserting $\emptyset$ in $S'$
requires to add in $\mA'$
a new state corresponding to the type
$(0,0,\emptyset, \emptyset, \emptyset)$
and let all states point to it with transitions labeled by~$\emptyset$.
\end{proof}

In the next section we will discuss the behavior of N-excursions and N-meanders for general N-step sets.

\section{N-meanders and N-excursions with general N-steps}
\label{sec:general_NExcursions}

In this section we will show that the counting generating function (keeping track of the length and the maximum reachable point) of N-meanders is always algebraic.
We conjecture that the same is true for N-excursions, which we show in several further examples.

    \subsection{N-meanders}

Recall that a (classical) meander is a walk
that stays nonnegative,
and that an N-meander is an N-walk compatible
with at least one meander.

\begin{theorem}
    \label{theo:NMeanderalgebraic}
    Consider a finite family $S$ of nonempty N-steps and define the multiset $S_T = \{ \max(\vs) \mid \vs \in S \}$. 
    \begin{itemize}
        \item The generating function $D^+(1,y;t)$ of N-meanders with N-step set $S$, where $y$ marks the maximum reachable point and $t$ the length, is equal to the generating function of classical meanders with step set $S_T$ and is therefore algebraic.
        \item The generating function $D^+(0,0;t)$ of N-meanders with N-step set $S$ and ending with reachable point set $\{0\}$ is equal to the generating function of classical excursions with step set $S_T$ and is therefore algebraic.
    \end{itemize}
\end{theorem}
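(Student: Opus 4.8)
The plan is to show that replacing each N-step $s$ by the single integer $\max(s)$ induces a weight-preserving correspondence between N-meanders on $S$ and classical meanders on the multiset $S_T$, and then to invoke the Banderier--Flajolet theorem \cite{BaFl02} that classical meanders and classical excursions have algebraic generating functions.

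First I would describe the dynamics of the reachable points of an N-meander. For $w = (s_1,\dots,s_n)$ set $R_0 = \{0\}$ and $R_j = (R_{j-1} + s_j)\cap\integers_{\geq 0}$, using the sumset operation. An easy induction shows that $R_j$ is the set of endpoints of classical meanders compatible with $(s_1,\dots,s_j)$: a classical meander of length $j$ compatible with that prefix is a classical meander of length $j-1$ compatible with $(s_1,\dots,s_{j-1})$, followed by a step of $s_j$ that keeps the walk nonnegative. Hence $w$ is an N-meander iff $R_n\neq\emptyset$, equivalently iff $R_j\neq\emptyset$ for all $j$ (once empty, the sets stay empty). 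The crucial lemma is that, when $R_{j-1}\neq\emptyset$ with $M_{j-1}:=\max R_{j-1}$, one has $\max(R_{j-1}+s_j) = M_{j-1}+\max(s_j)$, so that $R_j\neq\emptyset$ with $\max R_j = M_{j-1}+\max(s_j)$ if $M_{j-1}+\max(s_j)\geq 0$, and $R_j=\emptyset$ otherwise. The content here is that the absorbing boundary at $0$ only deletes elements strictly below $0$, hence deletes the current maximum only when it deletes the whole set.

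Iterating this, and writing $a_\ell:=\max(s_\ell)$, I get: $w$ is an N-meander iff all partial sums $a_1+\cdots+a_i$ $(1\leq i\leq n)$ are nonnegative, i.e.\ iff $(a_1,\dots,a_n)$ is a classical meander on $S_T$, and then $\max^+(w)=a_1+\cdots+a_n$; moreover, since $0\leq\min^+(w)\leq\max^+(w)$, the final reachable set of such a $w$ equals $\{0\}$ iff $\max^+(w)=0$, i.e.\ iff $(a_1,\dots,a_n)$ is a classical excursion on $S_T$. Thus the coordinatewise map $w\mapsto(\max(s_1),\dots,\max(s_n))$ sends N-meanders onto classical meanders on $S_T$ and N-meanders ending at $\{0\}$ onto classical excursions on $S_T$; the fibre over a fixed classical path $(a_1,\dots,a_n)$ is $\prod_\ell\{s\in S:\max(s)=a_\ell\}$, and both $|w|$ and $\max^+(w)$ are constant on this fibre. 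Summing the defining series of $D^+(1,y;t)$ and of $D^+(0,0;t)$ over fibres then expresses them exactly as the generating functions of classical meanders, resp.\ classical excursions, on the weighted step set $S_T$ (the classical step $a$ carrying weight $\sum_{\max(s)=a}p_s$, or simply the multiplicity of $a$ in $S_T$ in the unweighted case). By \cite{BaFl02} both are algebraic, which is the claim.

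The step I expect to be the main obstacle is the reachable-points lemma above: one must argue carefully that intersecting $R_{j-1}+s_j$ with $\integers_{\geq 0}$ never lowers the maximum (unless it empties the set), since this is precisely the place where the reflecting/absorbing boundary of the nondeterministic model could have created extra structure. Once this is in place, the remainder is bookkeeping with the fibres of the $\max$ map.
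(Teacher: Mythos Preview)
Your proof is correct and follows essentially the same approach as the paper: track the maximum reachable point (the ``top path''), observe that it evolves as a classical walk on the step set $S_T=\{\max(s):s\in S\}$ and survives exactly when that classical walk is a meander, then invoke Banderier--Flajolet. Your version is more carefully formalized (the $R_j$ recursion, the explicit lemma on $\max R_j$, and the fibre argument for weights), but the idea is identical to the paper's short proof.
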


\begin{proof}
    Observe that an N-meander can be continued as long as the set of reachable points is non-empty. 
    This is equivalent to the fact that the top-path in the N-meander, \ie the one the furthest away from the $x$-axis, can be continued. 
    In fact, mapping the N-meander to its top path gives a bijection between N-meanders and classical meanders.
    This classical meander's step set consists of the maxima of each N-step, which is therefore $S_T$.    
    Note that the multiset can be converted into an ordinary set, by introducing colors for steps that appear multiple times.
    
    Then, the number of N-meanders is the number of classical meanders with step set $S_T$. 
    Moreover, the number of N-meanders with reachable point set $\{0\}$ is equal to the number of classical excursions with step set $S_T$.
    Hence, by, \eg the theory of Banderier and Flajolet~\cite[Theorem~2]{BaFl02} we know that its generating function is algebraic and can be computed explicitly.
\end{proof}

\begin{remark}[Step set $S_T$ with colors]  
    \label{rem:NMeanderColors}
    We can interpret the step set $S_T$ also as ordinary set $\widehat{S_T}$, after attaching ``colors'' to the steps. 
    For this purpose, recall that each N-step $\vs \in S$ is associated with a weight $p_{\vs}$. 
    Then, by the previous result, each step $j \in \widehat{S_T}$ gets a weight
    \[q_j = \sum_{\substack{\max(\vs)=j \\ \vs \in S}} p_{\vs}. \]
    This interpretation directly shows that the generating functions $D^+(1,y;t)$ and $D^+(0,0;t)$ depend only on these weights $q_j$, which generalizes the result of Corollary~\ref{cor:DyckNMeanderpq}.
\end{remark}

It remains to understand the behavior of N-excursions for general N-step sets, which is strongly coupled with the generic nature of the generating function of N-meanders in three variables (length, minimum and maximum reachable point).
This will turn out to be the most difficult problem. 

    \subsection{N-excursions}

Recall that a (classical) excursion is a meander with endpoint $0$,
and an N-excursion is an N-walk compatible
with at least one excursion.
First, consider the special case of a step set closed under vertical symmetry:
We define $-\vs = \{-s_1, \dots, -s_k\}$ for any $\vs = \{s_1,\dots,s_k\}$.
An N-step set $S$ is \emph{symmetric}, if for all $\vs \in S$ we also have $-\vs \in S$. 
The following result generalizes Corollary~\ref{cor:DyckNExcSymmetric}.

\begin{theorem}
    Consider a symmetric finite N-step family $S$,
    where each N-step $\vs$ comes with a weight $p_{\vs}$.
    Then the generating function $D^+(0,1;t)$ of N-excursions
    is symmetric in $p_{\vs}$ and $p_{-\vs}$ for all $\vs \in S$.
\end{theorem}

\begin{proof}
    We define an involution $\Phi : S \to S$ by mapping each N-step to its symmetric counter part
    \begin{align*}
        \Phi(\vs) = -\vs.
    \end{align*}
    As in Corollary~\ref{cor:DyckNExcSymmetric}, this extends to an involution between N-excursions:
    \begin{align*}
        w=(\vw_1,\dots,\vw_n) &\mapsto w'=(\vw'_1,\dots,\vw'_n), \quad \text{ where } \vw'_i = \Phi(\vw_{n+1-i}).
    \end{align*}
    The key observation is that if the classical excursion $v$ is compatible with $w$, then the time-reversed classical excursion $v'$ is compatible with $w'$; see, \eg \cite{Banderieretal2020Latticepathology} for such operations on lattice paths.    
    By the symmetry of the N-step set $S$, the N-steps of $w'$ all lie in $S$.
\end{proof}

We conjecture that Proposition~\ref{th:type:of:sumsets}
extends to N-meanders.

\begin{conjecture}
For any N-step set $S$,
there exists a finite family of types
such that the set of reachable points of any N-meander on $S$
belongs to one of those types.
\end{conjecture}

We also believe that a variant of Proposition~\ref{th:automatic:type:of:sumsets}
applies to N-meanders.
In order to express it,
we first define a variant of the automaton
defined in Proposition~\ref{th:automatic:type:of:sumsets}.
Indeed, as illustrated by Motzkin N-meanders (see Figure~\ref{fig:motzkin_Nmeanders_structure}),
information about the minimum and maximum reachable points
is needed in the transitions of the automaton
for N-meanders.

Let us recall that the minimum (\resp maximum) reachable point
of an N-meander $w$
is the minimal (\resp maximal) endpoint
of any meander compatible with $w$.

\begin{definition}
Given a finite N-step family $S$,
a \emph{meander automaton} $\mA$ on $S$
is composed of a finite set of states,
among which the state $0$ is the initial state.
It has a finite number of transitions.
Each transition is an arc from a state to a state,
decorated with a triplet $(\vs, \lowercond, \uppercond)$,
where
\begin{itemize}
\item $\vs$ is an N-step from $S$,
\item $\lowercond$ (\resp $\uppercond$) is a predicate
that inputs an integer $h$
and has the form $h = k$, $h<k$, or $h > k$ for some integer $k$.
\end{itemize}
Any N-meander $w$ (\ie sequence of N-steps)
is either rejected by the automaton $\mA$,
or it corresponds to a walk on the states of $\mA$
defined inductively as follows.
If $w$ is the empty sequence,
then the walk has length $0$, starting and ending
at the initial state $0$.
Otherwise, $w$ decomposes as a sequence
$w' = (\vs_1, \ldots, \vs_{\ell})$
followed by an N-step $\vs$.
If $w'$ is rejected by $\mA$,
then so is $w$.
Otherwise, let $j'$ denote the state reached by $w'$
on its walk on $\mA$.
If there is a transition from $j'$
of the form $(\vs, \lowercond, \uppercond)$
such that the minimum reachable point of the N-meander $w'$
satisfies $\lowercond$
and the maximum reachable point of $w'$
satisfies $\uppercond$,
then this transition must be unique
(deterministic automaton).
The state $j$ to which it points
is the state reached by $w$.
If no such transition exists,
then $\mA$ rejects the N-meander $w$.
\end{definition}

Examples of meander automata
are provided in Figures~\ref{fig:motzkin_Nmeanders_structure}
and~\ref{fig:bigexample}
(where the predicates $\lowercond$ and $\uppercond$
are omitted when they are equal to
the trivial predicate $\geq 0$).
Now that meander automata are defined,
we can conjecture an extension of
Proposition~\ref{th:automatic:type:of:sumsets}.

\begin{conjecture}
\label{conj:meander:automaton}
For any finite N-step family $S$,
there exists a meander automaton $\mA$
and a type associated to each of its states,
such that $\mA$ accepts exactly the N-meanders on $S$
with nonempty reachable points,
and any such N-meander belongs to the type
associated to the final state it reaches
during its walk on $\mA$.
\end{conjecture}

We believe proving the previous conjecture
would be an important step into proving the following
main conjecture of this paper.

\begin{conjecture}
    \label{conj:NExcursionsAlgebraic}
    For any N-step set, the generating function of N-excursions is algebraic.
\end{conjecture}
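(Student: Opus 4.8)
The plan to establish Conjecture~\ref{conj:NExcursionsAlgebraic} is to combine the type machinery of Section~\ref{sec:general_NBridges} with a \emph{matrix kernel method} carrying \emph{two} catalytic variables, in the spirit of the Motzkin analysis (Theorem~\ref{th:motzkin_N_meanders}). As the authors note, a single-stack pushdown automaton does not suffice in general, since both $\min^+(w)$ and $\max^+(w)$ may grow unboundedly; the point is that the associated \emph{functional system} nevertheless remains finite, and that the kernel method can be iterated once in $x$ and once in $y$ to solve it.

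\textbf{Step 1: finitely many types for N-meanders.} Proposition~\ref{th:type:of:sumsets} provides finitely many types for the reachable sets of unconstrained N-walks, and I would first upgrade it to N-meanders. The only new feature is that appending an N-step $\vs$ replaces a reachable set $\vr$ by $(\vr+\vs)\cap\naturals$, i.e.\ it deletes the points below the $x$-axis. Set $D=\max_{\vs\in S}\max(0,-\min(\vs))$. If $\min^+(w)\ge D$ nothing is deleted and $w$ evolves exactly like an unconstrained N-walk, so Proposition~\ref{th:type:of:sumsets} applies; if $\min^+(w)<D$ the deletion only strips a bottom segment of bounded length from one of the finitely many shapes already produced, yielding finitely many new shapes up to shift (and symmetrically one must track a bounded marker of $\max^+(w)$, since an N-step with $\max(\vs)<0$ is appendable only when $\max^+(w)$ is large enough). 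Running the induction of Proposition~\ref{th:automatic:type:of:sumsets}, augmented with these boundary transitions, then yields a finite automaton whose states are proper types and which matches every N-meander to the type of its reachable set; this also discharges the finiteness hypothesis in the earlier theorem assuming $\max\min_{s\in S}s\le 0$.

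\textbf{Steps 2 and 3: a finite system and the iterated kernel method.} For each type $\tau$, let $F_\tau(x,y;t)$ count N-meanders of type $\tau$ with $x$ marking $\min^+$, $y$ marking $\max^+$, and $t$ the length, so that $\sum_\tau F_\tau(0,1;t)$ is the sought generating function of N-excursions. A step-by-step decomposition gives, for the column vector $\vect{F}$ of the $F_\tau$, an equation $(\id-tA(x,y))\vect{F}(x,y;t)=\vect{e}+t\,R(x,y;t)$, where $A(x,y)$ is the matrix of min--max-change polynomials governing the unconstrained type transitions (cf.~\eqref{eq:minmaxchangepoly} and Remark~\ref{rem:NWalksGeneric}), $\vect{e}$ records the empty walk, and $R$ collects the corrections from the boundary N-meanders; the latter involves only the finitely many boundary unknowns $[x^j]\vect{F}(x,y;t)$ for $j<D$ (functions of $y,t$), $[y^j]\vect{F}(x,y;t)$ for bounded $j$ (functions of $x,t$), and finitely many functions of $t$ alone. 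The kernel is the matrix $K(x,y;t)=\id-tA(x,y)$. Clearing denominators, $\det K$ is a polynomial in $x$; substituting its ``small'' roots $x=X_i(y,t)$ (the branches that are power series in $t$) and left-multiplying by the corresponding left-kernel vectors kills the left-hand side and produces linear relations in $y,t$ among the boundary unknowns. Because the spatial constraint is effectively one-dimensional --- only the bottom barrier is active, and $x$-positivity of the two-dimensional image forces $y$-positivity, exactly as for Dyck --- I expect the number of such roots to equal the number of $y$-dependent unknowns, so this first batch of substitutions determines the $[x^j]\vect{F}$; one then repeats the process with the small roots $y=Y_\ell(t)$ of the reduced kernel to pin down the $x$-dependent unknowns and the scalar unknowns, and back-substitutes. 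Everything produced this way is algebraic over $\mathbb{Q}(x,y,t)$, so each $F_\tau$ is algebraic, and $\sum_\tau F_\tau(0,1;t)$ is algebraic by closure of algebraic functions under finite sums, coefficient extraction in $x,y$, and specialization~\cite{FS09,Gessel80}.

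\textbf{Main obstacle.} The delicate point is the ``closure'' of the kernel method: one must show that the matrix kernel $K(x,y;t)$ supplies \emph{exactly} as many usable root substitutions as there are boundary unknowns at each of the two stages, so that the linear systems are invertible over the relevant algebraic function field. This is the familiar subtlety of multivariate and matrix kernel arguments --- it is precisely what fails for non-D-finite quarter-plane walks --- and here it must be established uniformly over all finite N-step sets $S$, presumably by exploiting the essentially one-dimensional nature of the constraint (so that $K$ factors and the root count is governed by a scalar, Banderier--Flajolet-type kernel). A secondary difficulty is making Step~1 fully rigorous, i.e.\ genuinely ruling out that the bottom-barrier interaction ever creates infinitely many types.
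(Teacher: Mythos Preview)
This statement is presented in the paper as an open \emph{conjecture}; the paper gives no proof. What it does give is exactly the circle of ideas you outline: the type automaton of Section~\ref{sec:general_NBridges}, the matrix kernel method used for Motzkin in Theorem~\ref{th:motzkin_N_meanders}, and the partial result assuming both $\max\min_{s\in S}s\le 0$ \emph{and} finiteness of meander types. Your plan is therefore not a different route but a restatement of the programme the authors already have in mind, together with an honest inventory of the missing pieces.

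Those missing pieces are real. For Step~1, your argument that bottom truncation ``only strips a bottom segment of bounded length'' is correct for a \emph{single} truncation of a set whose type is already known, but you must control an \emph{iterated} process: truncate, add N-steps, truncate again, and so on. Each truncation changes the bottom fringe $\va$, and the composition of a truncation with a sumset (Lemma~\ref{th:sumset:to:type}) is not obviously closed under a finite family of fringes. The paper deliberately keeps finiteness of meander types as a hypothesis rather than a lemma, and nothing in your sketch closes that gap.

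For Steps~2--3, your ``effectively one-dimensional'' claim needs sharpening. It is true that $\max^+\ge\min^+\ge 0$, so the absorbing boundary (death when $\max^+$ is too small) lies inside the reflecting region (truncation when $\min^+$ is small); consequently the boundary unknowns are of the form $[x^j]\vect{F}(x,y;t)$ and $[x^j y^k]\vect{F}$ for small $j,k$, \emph{not} functions of $(x,t)$ as you wrote. But this structural remark does not by itself guarantee that $\det K(x,y;t)$ has the right number of small $x$-roots, nor that the resulting linear system in $y$ is nonsingular, especially once several types interact through a non-triangular $A(x,y)$. In the Motzkin case the matrix happened to be triangular, which is what made the two-phase kernel method go through; you offer no argument for why an analogue of that triangularity (or a substitute) holds in general. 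Until both obstacles are resolved, the conjecture remains open.
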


We implemented a Python package~\cite{gitlabproject} that helps to create the automaton for N-meanders (and also N-bridges) for any given N-step set starting from the origin. 
First, the user inputs a set of N-steps $S$.
Second, the user specifies the encountered types $\type{n_i}{k_i}{\va_i}{\vb_i}{\vc_i}$, $i=1,\dots,T$. 
Next, the program attaches the N-steps to the reachable points in the types, and checks to which type they lead. 
If there are reachable points that cannot be matched with any type, the program outputs them and the user needs to extend or change the chosen types. 
Once there are no more unmatched reachable points, the automaton is finished.
Then, the program builds the transition matrices. 

For this purpose the program chooses ``suitable candidates'' in the types, as these are in general infinite families.
Note that at this stage the program might miss cases, as the following strategy is a heuristic that worked for us in all considered cases:
For each type we choose three candidates $\vr$ depending on the minimum and maximum:
\begin{enumerate}
    \item \emph{Min large/Max large:} $\vr = \widetilde{\vr} + \{m\}$ where $\widetilde{\vr} = j \times \{0,n\} + \vb \usub \va \msub \vc$ such that $j = \max_i (k_i) +1$ and $m=-\min(\widetilde{\vr}) + \max_i \min \va_i$. 
    This is the generic case, in which there is no interaction with the boundary after appending an N-step and no bottom pruning necessary.
    \item \emph{Min small/Max large:} For each $\ell \in  \{0,\dots,c\}$ where $c:=-\min \min S$ we consider
    $\vr_{\ell} = \widetilde{\vr} + \{m + \ell\}$ where $\widetilde{\vr} = j \times \{0,n\} + \vb \usub \va \msub \vc$ such that $j = \max_i (k_i) +1$ and $m=-\min(\widetilde{\vr})$. 
    In this case the reachable points have minimum $\ell$ and a large maximum. 
    Now, at least one N-step after being appended to $\vr_{\ell}$ will lead to pruning.
    Because of that there is a ``jump'' in the minimum.
    \item \emph{Min small/Max small:} For each $\ell \in  \{0,\dots,c\}$ we consider
    $\vr_{\ell} = \widetilde{\vr} + \{m + \ell\}$ where $\widetilde{\vr} = k \times \{0,n\} + \vb \usub \va \msub \vc$ such that $j = \max_i (k_i) +1$ and $m=-\min(\widetilde{\vr})$. 
    These are the reachable points with smallest norm (note the choice of $j=k$ compared to before) and small minimum $\ell$. 
    Again, at least one N-step after being appended to $\vr_{\ell}$ will lead to pruning.
    In these cases the maximum might also lead to pruning.
\end{enumerate}

We define now for each type $i$ a generating function that keeps track of the length in $t$, the minimum in $x$, and the maximum in $y$ as follows: 
\begin{align*}
    M_i^+(x,y;t) &= \sum_{w \in M_1^+} x^{\min^+(w)} y^{\max^+(w)-\sigma_i} t^{|w|},
\end{align*}
where $\sigma_i$ is the minimal norm in type $i$. 
We group all of them in the column vector $M^+(x,y,t) = ( M_1^+(x,y;t), \dots,  M_N^+(x,y;t) )^T$.
As in the case of Dyck and Motzkin paths, these generating functions are related as follows
\begin{align*}
	M^+(x,y; t) &= 
    e_1 + t \Bigg( 
    A(x,y) \Big(M^+(x,y; t) - \sum_{\ell} [x^{\ell} y^{\geq 0}] M^+(x,y; t)\Big) \\
    &\quad+ \sum_{\ell} B_{\ell}(x,y) [x^{\ell} y^{>0}] M^+(0,y; t)  
    +\sum_{\ell} C_{\ell}(x,y) [x^{\ell} y^{0}] M^+(0,y; t) \Bigg),
\end{align*}
where the matrices $A(x,y)$, $B_{\ell}(x,y)$, and $C_{\ell}(x,y)$ describe the transitions between types when appending N-steps and the associated change in the minimum and maximum of the reachable points. 
The Python code then saves these matrices and the shifts $\sigma_i$ in Maple readable form into a text file.

Second, we implemented a Maple worksheet, in which this information can be loaded.
Then, this worksheet provides some help to analyze and solve the generating functions of N-meanders. 
For example, it provides methods to iteratively compute the first terms of the series expansions, and to guess the generating functions of N-meanders $M^+(1,1;t)$ and N-excursions $M^+(0,1;t)$. 

\smallskip

At the end, let us show how this code can help to analyze a model of N-walks on the example of N-steps $\{\{-1\}, \{-1,1\}, \{-2,1\}\}$.
Using this Python code we found the transition automaton of N-meanders shown in Figure~\ref{fig:bigexample} in Appendix~\ref{app:bigexample}.
In particular, it consists of two types of period $0$, two types of period $1$, one type of period $2$, and $3$ types of period $3$. 
The reason for the appearance of period $3$, is the N-step $\{-2,1\}$ whose norm is $3$.
Note that the choice of types is in general not unique.
It remains to show that these $8$ types characterize all N-meanders, which we omit here. 
As all N-steps have a negative component, Theorem~\ref{th:negative_min_N_steps} that we prove below then implies that the corresponding generating function of N-excursions is algebraic. 

In the accompanying Maple file we used the computed matrices $A$, $B_0$, $B_1$, and $C_0$ to guess an algebraic equation of degree $4$ for the generating function $M^+(0,1;t)$ of N-excursions. 
We tested it using the first $500$ coefficients, and we leave the proof of its correctness as an open problem.

\smallskip

Finally, we prove a particular case of our main conjecture~\ref{conj:NExcursionsAlgebraic}. 
We show that when the meander automaton is known \emph{and} the N-step set is of a special shape, then the generating function of N-excursions is indeed algebraic.

\begin{theorem} \label{th:negative_min_N_steps}
    Consider a finite N-step set $S$,
    where each N-step has a non-positive part,
    meaning $\max \min_{s \in S} s \leq 0$.
    Furthermore, let assume that there exists a meander automaton $\mA$
    with a type associated to each state,
    such that any N-meander on $S$ belongs
    to the type associated to it by its walk on $\mA$.
    Then, the generating function of N-excursions is algebraic.
\end{theorem}

\begin{proof}
    Recall that the norm $\|\vs\|$ on an N-step $\vs$
    is defined as $\max(\vs) - \min(\vs)$.
    Let $d = \max_{\vs \in S} \|\vs\|$
    and assume every N-step from $S$ has a nonpositive part,
    then by induction, for any N-meander $w$ on $S$,
    the minimum reachable point of $w$ is at most $d$.
    Then, we build a deterministic pushdown automaton as follows:
    First, we create $d+1$ states for each state of $\mA$,
    where each of them corresponds to a different value of the minimum. 
    Second, we add the transitions according to the transitions
    of $\mA$.
    Recall that any set in a type is uniquely characterized
    by its minimum and maximum. 
    As the minimum is known, it suffices to know the maximum,
    which we encode in the stack of a pushdown automaton. 
    Hence, N-excursions can be encoded by an unambiguous context-free grammar~\cite{Banderieretal2020Latticepathology,Duchon00,MerliniEtal1999Underdiagonal,BP08}, and therefore they have an algebraic generating function by the Chomsky--Schützenberger enumeration theorem~\cite{ChomskySchuetzenberger1963CF}; see also Section~\ref{sec:introalgebraic}.
\end{proof}

Note that for the cases of Dyck and Motzkin N-steps considered in Sections~\ref{sec:DyckNWalks} and \ref{sec:MotzkinNWalks} the previous theorem does not hold, as both contain the step $\{1\}$. 
Yet, as shown in Theorems~\ref{th:other_Dyck_Nmeanders} and~\ref{th:motzkin_N_meanders} the generating functions are still algebraic.

\section{Conclusion}

In this paper, motivated by the study
of encapsulation and decapsulation of protocols over networks,
we introduced nondeterministic walks (N-walks).
We solved their exact and asymptotic enumeration
for the particular cases of Dyck and Motzkin N-walks.
Our main tool was analytic combinatorics,
and more specifically the kernel method.
Furthermore, we showed that for arbitrary N-steps, the generating function of N-bridges
(N-walks that can reach $0$) is algebraic.
We also conjecture that the generating function
of N-excursions (N-walks that can reach $0$ while staying nonnegative)
is algebraic, and provided special cases, examples and code supporting this conjecture.

The generating function of any non-ambiguous context-free language
is algebraic.
Ambiguous context-free languages can have
algebraic or non-algebraic generating functions
(see, \eg \cite{koechlin2022new} and references therein).
We conjecture that for any finite N-step family,
the generating function of N-excursions is algebraic.
Is it the case that the underlying language is always
a non-ambiguous context-free one?
If not, which N-step families correspond to
non-ambiguous context-free languages?
We leave those very interesting questions as open problems.

The general study of encapsulation and decapsulation protocols over networks
proved too challenging on a first approach,
so we simplified it in two ways.
We considered only one protocol
and assumed the network to be a line of servers.
In future works, it would be very interesting to remove those limitations.
Extending our results to several protocols seems challenging and interesting,
as the set of reachable points loses most of the algebraic structure
that allowed us to use generating functions
with a finite number of variables.
A probabilistic approach might prove more adequate.
On the other hand, more general network geometries (with only one protocol)
seem amenable to the tools developed in this article.
A first natural model to consider would be series parallel graphs,
since they possess a natural source and sink.

\medskip

{\bf Acknowledgements:}
\label{sec:ack}
\anonymous{The authors would like to thank Mohamed Lamine Lamali
for providing the motivation of this work
and for his contributions to the short version of this paper.
We thank Mireille Bousquet-M\'elou and Rika Yatchak for their useful comments and suggestions. 
We warmly thank Florent Koechlin and Arnaud Carayol for interesting conversations on the (non-)ambiguity of the language of N-walks and N-excursions in particular.
We would also like to thank Sergey Dovgal and Philippe Duchon for pointing out the combinatorial proof of Corollary~\ref{cor:DyckNExcSymmetric}.
We are particularly grateful to the referees for their valuable suggestions, which greatly improved the paper, and especially for recommending the addition of Section~\ref{sec:introAC} to make it more self-contained.
Both authors were supported by the Rise project RandNET, grant no.: H2020-EU.1.3.3.
The second author was supported by the Exzellenzstipendium of the Austrian Federal Ministry of Education, Science and Research and Austrian Science Fund (FWF):~P~34142 and J~4162.}{The authors are grateful for the support of many colleagues and institutions, that will be cited in the final version. We are particularly grateful to the referees for their valuable suggestions, which greatly improved the paper, and especially for recommending the addition of Section~\ref{sec:introAC} to make it more self-contained.}

\bibliographystyle{mybiburl}
\bibliography{biblio-elie}

\begin{thebibliography}{10}

\bibitem{AsBaBaGi18}
Andrei Asinowski, Axel Bacher, Cyril Banderier, and Bernhard Gittenberger.
\newblock \href{https://doi.org/10.1007/s00453-019-00623-3}{Analytic
  combinatorics of lattice paths with forbidden patterns, the vectorial kernel
  method, and generating functions for pushdown automata}.
\newblock  \textit{Algorithmica}, 82(3):386--428, 2020.

\bibitem{BaFl02}
Cyril Banderier and Philippe Flajolet.
\newblock \href{http://dx.doi.org/10.1016/S0304-3975(02)00007-5}{Basic analytic
  combinatorics of directed lattice paths}.
\newblock  \textit{Theoretical Computer Science}, 281(1–2):37 -- 80, 2002.

\bibitem{BKKK}
Cyril Banderier, Christian Krattenthaler, Alan Krinik, Dmitry Kruchinin,
  Vladimir Kruchinin, David Nguyen, and Michael Wallner.
\newblock \href{https://arxiv.org/pdf/1609.06473.pdf}{ \textit{Explicit
  Formulas for Enumeration of Lattice Paths: Basketball and the Kernel
  Method}}, volume~58 of  \textit{Developments in Mathematics}, pages 78--118.
\newblock Springer, Cham, 2019.
\newblock
  \href{http://dx.doi.org/10.1007/978-3-030-11102-1_6}{{\textcolor{lightgray}[}doi{\textcolor{lightgray}]}}.

\bibitem{Banderieretal2020Latticepathology}
Cyril Banderier, Marie-Louise Lackner, and Michael Wallner.
\newblock
  \href{https://drops.dagstuhl.de/opus/volltexte/2020/12032}{{Latticepathology
  and Symmetric Functions (Extended Abstract)}}.
\newblock In  \textit{AofA 2020}, volume 159 of  \textit{LIPIcs}, pages
  2:1--2:16. Schloss Dagstuhl--Leibniz-Zentrum f{\"u}r Informatik, 2020.
\newblock
  \href{http://dx.doi.org/10.4230/LIPIcs.AofA.2020.2}{{\textcolor{lightgray}[}doi{\textcolor{lightgray}]}}.

\bibitem{bawa15c}
Cyril Banderier and Michael Wallner.
\newblock \href{https://arxiv.org/pdf/1605.01687.pdf}{Some reflections on
  directed lattice paths}.
\newblock In  \textit{{Proceedings of the 25th International Conference on
  Probabilistic, Combinatorial and Asymptotic Methods for the Analysis of
  Algorithms}}, pages 25--36, 2014.

\bibitem{BaWa17}
Cyril Banderier and Michael Wallner.
\newblock \href{http://dmtcs.episciences.org/3967}{Lattice paths with
  catastrophes}.
\newblock  \textit{{Discrete Mathematics \& Theoretical Computer Science}},
  {Vol 19 no. 1}, September 2017.

\bibitem{BaWa19}
Cyril Banderier and Michael Wallner.
\newblock \href{https://doi.org/10.1007/978-3-030-11102-1_7}{ \textit{The
  kernel method for lattice paths below a line of rational slope}}, volume~58
  of  \textit{Developments in Mathematics}, pages 119--154.
\newblock Springer, Cham, 2019.
\newblock
  \href{http://dx.doi.org/10.1007/978-3-030-11102-1_7}{{\textcolor{lightgray}[}doi{\textcolor{lightgray}]}}.

\bibitem{BOR18}
Nicholas~R. Beaton, Aleksander~L. Owczarek, and Andrew Rechnitzer.
\newblock \href{https://doi.org/10.37236/8024}{Exact solution of some quarter
  plane walks with interacting boundaries}.
\newblock  \textit{Electron. J. Combin.}, 26(3):Paper No. 3.53, 38, 2019.
\newblock
  \href{http://dx.doi.org/10.37236/8024}{{\textcolor{lightgray}[}doi{\textcolor{lightgray}]}}.

\bibitem{XBO19}
Nicholas~R. Beaton, Aleksander~L. Owczarek, and Ruijie Xu.
\newblock
  \href{https://www.mat.univie.ac.at/~slc/wpapers/FPSAC2019/26.html}{Quarter-plane
  lattice paths with interacting boundaries: {K}reweras and friends}.
\newblock  \textit{S\'{e}m. Lothar. Combin.}, 82B:Art. 26, 12, 2020.

\bibitem{BousquetMelouJehanne2006Catalytic}
Mireille Bousquet-M\'{e}lou and Arnaud Jehanne.
\newblock \href{https://doi.org/10.1016/j.jctb.2005.12.003}{Polynomial
  equations with one catalytic variable, algebraic series and map enumeration}.
\newblock  \textit{J. Combin. Theory Ser. B}, 96(5):623--672, 2006.
\newblock
  \href{http://dx.doi.org/10.1016/j.jctb.2005.12.003}{{\textcolor{lightgray}[}doi{\textcolor{lightgray}]}}.

\bibitem{BM10}
Mireille Bousquet-M{\'e}lou and Marni Mishna.
\newblock \href{http://dx.doi.org/10.1090/conm/520/10252}{Walks with small
  steps in the quarter plane}.
\newblock In  \textit{Algorithmic probability and combinatorics}, volume 520 of
   \textit{Contemp. Math.}, pages 1--39. Amer. Math. Soc., Providence, RI,
  2010.
\newblock
  \href{http://dx.doi.org/10.1090/conm/520/10252}{{\textcolor{lightgray}[}doi{\textcolor{lightgray}]}}.

\bibitem{BP08}
Mireille Bousquet-M{\'e}lou and Yann Ponty.
\newblock \href{https://doi.org/10.46298/dmtcs.438}{Culminating paths}.
\newblock  \textit{Discrete Math. Theor. Comput. Sci.}, 10(2):125--152, 2008.

\bibitem{buchacher2018inhomogeneous}
Manfred Buchacher and Manuel Kauers.
\newblock Inhomogeneous restricted lattice walks.
\newblock  \textit{arXiv preprint arXiv:1811.06725}, 2018.

\bibitem{ChomskySchuetzenberger1963CF}
Noam Chomsky and Marcel-Paul Sch\"{u}tzenberger.
\newblock The algebraic theory of context-free languages.
\newblock In  \textit{Computer programming and formal systems}, pages 118--161.
  North-Holland, Amsterdam, 1963.

\bibitem{CourtielEtAl17Weighted}
Julien Courtiel, Stephen Melczer, Marni Mishna, and Kilian Raschel.
\newblock \href{https://doi.org/10.1016/j.jcta.2017.06.008}{Weighted lattice
  walks and universality classes}.
\newblock  \textit{J. Combin. Theory Ser. A}, 152:255--302, 2017.
\newblock
  \href{http://dx.doi.org/10.1016/j.jcta.2017.06.008}{{\textcolor{lightgray}[}doi{\textcolor{lightgray}]}}.

\bibitem{DePanafieuLamaliWallner2019Nondet}
{\'E}lie de~Panafieu, Mohamed~Lamine Lamali, and Michael Wallner.
\newblock
  \href{https://epubs.siam.org/doi/abs/10.1137/1.9781611975505.1}{Combinatorics
  of nondeterministic walks of the {Dyck} and {Motzkin} type}.
\newblock In  \textit{{ANALCO 2019}}, pages 1--12, 2019.

\bibitem{Dreyfus2017Walks}
Thomas Dreyfus, Charlotte Hardouin, Julien Roques, and Michael~F Singer.
\newblock Walks in the quarter plane, genus zero case.
\newblock  \textit{arXiv preprint arXiv:1710.02848}, 2017.

\bibitem{DrSo97}
Michael Drmota and Mich{\`e}le Soria.
\newblock \href{https://doi.org/10.1137/S0895480194268421}{Images and preimages
  in random mappings}.
\newblock  \textit{SIAM Journal on Discrete Mathematics}, 10(2):246--269, 1997.

\bibitem{Duchon00}
Philippe Duchon.
\newblock
  \href{https://www.sciencedirect.com/science/article/pii/S0012365X00001503}{On
  the enumeration and generation of generalized {D}yck words}.
\newblock  \textit{Discrete Math.}, 225(1-3):121--135, 2000.

\bibitem{FS09}
Philippe Flajolet and Robert Sedgewick.
\newblock \href{http://algo.inria.fr/flajolet/Publications/book.pdf}{
  \textit{Analytic Combinatorics}}.
\newblock Cambridge University Press, 2009.

\bibitem{gitlabproject}
Anonymous for~the review.
\newblock
  \href{https://drive.google.com/drive/folders/1bGj2sEB6BefeHaz8yz9LprfIibLf7qW6?usp=sharing}{non-deterministic
  walks}.
\newblock  \textit{GitLab for the final version}, 2023.

\bibitem{Furstenberg1963Algebraic}
Harry Furstenberg.
\newblock \href{https://doi.org/10.1016/0021-8693(67)90061-0}{Algebraic
  functions over finite fields}.
\newblock  \textit{J. Algebra}, 7:271--277, 1967.
\newblock
  \href{http://dx.doi.org/10.1016/0021-8693(67)90061-0}{{\textcolor{lightgray}[}doi{\textcolor{lightgray}]}}.

\bibitem{Gessel80}
Ira~M. Gessel.
\newblock \href{https://doi.org/10.1016/0097-3165(80)90074-6}{A factorization
  for formal {L}aurent series and lattice path enumeration}.
\newblock  \textit{J. Combin. Theory Ser. A}, 28(3):321--337, 1980.
\newblock
  \href{http://dx.doi.org/10.1016/0097-3165(80)90074-6}{{\textcolor{lightgray}[}doi{\textcolor{lightgray}]}}.

\bibitem{HOP17}
Nils Haug, Adri Olde~Daalhuis, and Thomas Prellberg.
\newblock \href{http://dx.doi.org/10.1007/s10955-016-1708-4}{Higher-order
  {A}iry scaling in deformed {D}yck paths}.
\newblock  \textit{Journal of Statistical Physics}, pages 1--16, 2017.
\newblock
  \href{http://dx.doi.org/10.1007/s10955-016-1708-4}{{\textcolor{lightgray}[}doi{\textcolor{lightgray}]}}.

\bibitem{HopcroftMotwaniUllman2006}
John~E. Hopcroft, Rajeev Motwani, and Jeffrey~D. Ullman.
\newblock  \textit{Introduction to Automata Theory, Languages, and
  Computation}.
\newblock Addison-Wesley, Reading, MA, 3rd edition, 2006.

\bibitem{H98}
Hsien-Kuei Hwang.
\newblock \href{http://dx.doi.org/10.1006/eujc.1997.0179}{On convergence rates
  in the central limit theorems for combinatorial structures}.
\newblock  \textit{European J. Combin.}, 19(3):329--343, 1998.
\newblock
  \href{http://dx.doi.org/10.1006/eujc.1997.0179}{{\textcolor{lightgray}[}doi{\textcolor{lightgray}]}}.

\bibitem{vRPR08}
Esaias~J. Janse~van Rensburg, Thomas Prellberg, and Andrew Rechnitzer.
\newblock Partially directed paths in a wedge.
\newblock  \textit{J. Combin. Theory Ser. A}, 115(4):623--650, 2008.

\bibitem{KauersPaule2011Tetrahedron}
Manuel Kauers and Peter Paule.
\newblock \href{https://doi.org/10.1007/978-3-7091-0445-3}{ \textit{The
  concrete tetrahedron}}.
\newblock Texts and Monographs in Symbolic Computation. SpringerWienNewYork,
  Vienna, 2011.
\newblock Symbolic sums, recurrence equations, generating functions, asymptotic
  estimates.
\newblock
  \href{http://dx.doi.org/10.1007/978-3-7091-0445-3}{{\textcolor{lightgray}[}doi{\textcolor{lightgray}]}}.

\bibitem{Kn69}
Donald~E. Knuth.
\newblock  \textit{The Art of Computer Programming. {V}ol. 1: {F}undamental
  Algorithms}.
\newblock Addison-Wesley, 1968.

\bibitem{koechlin2022new}
Florent Koechlin.
\newblock \href{https://doi.org/10.4230/lipics.fsttcs.2022.41}{New analytic
  techniques for proving the inherent ambiguity of context-free languages}.
\newblock In  \textit{42nd {IARCS} {A}nnual {C}onference on {F}oundations of
  {S}oftware {T}echnology and {T}heoretical {C}omputer {S}cience}, volume 250
  of  \textit{LIPIcs. Leibniz Int. Proc. Inform.}, pages Paper No. 41, 22.
  Schloss Dagstuhl. Leibniz-Zent. Inform., Wadern, 2022.
\newblock
  \href{http://dx.doi.org/10.4230/lipics.fsttcs.2022.41}{{\textcolor{lightgray}[}doi{\textcolor{lightgray}]}}.

\bibitem{LFCP16}
Mohamed~Lamine Lamali, Nasreddine Fergani, Johanne Cohen, and H{\'{e}}lia
  Pouyllau.
\newblock Path computation in multi-layer networks: Complexity and algorithms.
\newblock In  \textit{{IEEE INFOCOM} 2016, San Francisco, CA, USA, April 10-14,
  2016}, pages 1--9, 2016.

\bibitem{Lipshitz1988Diagonal}
L.~Lipshitz.
\newblock \href{https://doi.org/10.1016/0021-8693(88)90166-4}{The diagonal of a
  {$D$}-finite power series is {$D$}-finite}.
\newblock  \textit{J. Algebra}, 113(2):373--378, 1988.
\newblock
  \href{http://dx.doi.org/10.1016/0021-8693(88)90166-4}{{\textcolor{lightgray}[}doi{\textcolor{lightgray}]}}.

\bibitem{Lipshitz88}
Leonard Lipshitz.
\newblock \href{https://doi.org/10.1016/0021-8693(88)90166-4}{The diagonal of a
  {$D$}-finite power series is {$D$}-finite}.
\newblock  \textit{J. Algebra}, 113(2):373--378, 1988.
\newblock
  \href{http://dx.doi.org/10.1016/0021-8693(88)90166-4}{{\textcolor{lightgray}[}doi{\textcolor{lightgray}]}}.

\bibitem{MerliniEtal1999Underdiagonal}
Donatella Merlini, Douglas~G. Rogers, Renzo Sprugnoli, and M.~Cecilia Verri.
\newblock \href{https://doi.org/10.1016/S0166-218X(98)00126-7}{Underdiagonal
  lattice paths with unrestricted steps}.
\newblock  \textit{Discrete Appl. Math.}, 91(1-3):197--213, 1999.
\newblock
  \href{http://dx.doi.org/10.1016/S0166-218X(98)00126-7}{{\textcolor{lightgray}[}doi{\textcolor{lightgray}]}}.

\bibitem{MishnaRechnitzer09Nonholonomic}
Marni Mishna and Andrew Rechnitzer.
\newblock \href{https://doi.org/10.1016/j.tcs.2009.04.008}{Two non-holonomic
  lattice walks in the quarter plane}.
\newblock  \textit{Theoret. Comput. Sci.}, 410(38-40):3616--3630, 2009.
\newblock
  \href{http://dx.doi.org/10.1016/j.tcs.2009.04.008}{{\textcolor{lightgray}[}doi{\textcolor{lightgray}]}}.

\bibitem{alfonsin2005diophantine}
Jorge~L. Ram\'{\i}rez~Alfons\'{\i}n.
\newblock \href{https://doi.org/10.1093/acprof:oso/9780198568209.001.0001}{
  \textit{The {D}iophantine {F}robenius problem}}, volume~30 of  \textit{Oxford
  Lecture Series in Mathematics and its Applications}.
\newblock Oxford University Press, Oxford, 2005.
\newblock
  \href{http://dx.doi.org/10.1093/acprof:oso/9780198568209.001.0001}{{\textcolor{lightgray}[}doi{\textcolor{lightgray}]}}.

\bibitem{rosen2015multicast}
E.~Rosen, I.~J. Wijnands, Y.~Cai, and A.~Boers.
\newblock {RFC}7582 - multicast virtual private network (mvpn): Using
  bidirectional p-tunnels, 2015.

\bibitem{SalvyZimmermann1994gfun}
Bruno Salvy and Paul Zimmermann.
\newblock \href{https://doi.org/10.1145/178365.178368}{Gfun: a {M}aple package
  for the manipulation of generating and holonomic functions in one variable}.
\newblock  \textit{ACM Transactions on Mathematical Software}, 20(2):163--177,
  1994.
\newblock
  \href{http://dx.doi.org/10.1145/178365.178368}{{\textcolor{lightgray}[}doi{\textcolor{lightgray}]}}.

\bibitem{seo2005security}
Karen Seo and Stephen Kent.
\newblock {RFC}4301 - security architecture for the internet protocol, 2005.

\bibitem{S01}
Richard~P. Stanley.
\newblock  \textit{Enumerative Combinatorics, Volume 2}.
\newblock Cambridge University Press, 1 edition, 1999.

\bibitem{TOR14}
Rami Tabbara, Aleksander~L. Owczarek, and Andrew Rechnitzer.
\newblock \href{https://doi.org/10.1088/1751-8113/47/1/015202}{An exact
  solution of two friendly interacting directed walks near a sticky wall}.
\newblock  \textit{J. Phys. A}, 47(1):015202, 34, 2014.
\newblock
  \href{http://dx.doi.org/10.1088/1751-8113/47/1/015202}{{\textcolor{lightgray}[}doi{\textcolor{lightgray}]}}.

\bibitem{tao2006additive}
Terence Tao and Van Vu.
\newblock \href{https://doi.org/10.1017/CBO9780511755149}{ \textit{Additive
  combinatorics}}, volume 105 of  \textit{Cambridge Studies in Advanced
  Mathematics}.
\newblock Cambridge University Press, Cambridge, 2006.
\newblock
  \href{http://dx.doi.org/10.1017/CBO9780511755149}{{\textcolor{lightgray}[}doi{\textcolor{lightgray}]}}.

\bibitem{wu2013transition}
Peng Wu, Yong Cui, Jianping Wu, Jiangchuan Liu, and Chris Metz.
\newblock Transition from {IPv4} to {IPv6}: {A} state-of-the-art survey.
\newblock  \textit{IEEE Communications Surveys \& Tutorials}, 15(3):1407--1424,
  2013.

\end{thebibliography}

\newpage

\appendix

\section{Subclasses of Motzkin N-walks in the OEIS}
\label{sec:SubclassesNMotzkin}

In Theorem~\ref{th:motzkin_N_meanders} we have characterized the generating function of Motzkin N-meanders $M^+(x,y;t)$ and proved that it is always algebraic.
In this section we will consider several specific choices of the weights in the counting generating function $M^+(0,1;t)$ of Motzkin N-excursions and $M^+(1,1;t,)$ of Motzkin N-meanders and we will see that many lead to known sequences in the OEIS. 
In particular, we consider all counting classes of such weights, meaning that the weights are either equal to $0$ or $1$.
We checked all $127$ possibilities, and list the non-trivial ones here in three tables.
Note that these connections are conjectural and are based on the fact that the first $15$ elements of the sequence are the same.

\subsection{Motzkin N-excursions}
\label{sec:SubclassesNMotzkinExcursions}

The examples in Table~\ref{tab:NMotzkinDycklike} satisfy $p_{-1,1}=1$, while the ones in Tables~\ref{tab:NMotzkinNoPm11} and \ref{tab:NMotzkinPathConnections} satisfy $p_{-1,1}=0$. 
Note that the latter consist of only one type that is an interval.

\bigskip

\newcounter{act}
\newcommand{\actname}[1]{A#1}
\newcommand{\addactclass}{\refstepcounter{act}\actname{\theact}}
\newcommand{\refact}[1]{{\footnotesize\hyperref[#1]\actname{\ref{#1}}}}

\newcounter{bct}
\newcommand{\bctname}[1]{B#1}
\newcommand{\addbctclass}{\refstepcounter{bct}\bctname{\thebct}}
\newcommand{\refbct}[1]{{\footnotesize\hyperref[#1]\bctname{\ref{#1}}}}

\newcounter{cct}
\newcommand{\cctname}[1]{C#1}
\newcommand{\addcctclass}{\refstepcounter{cct}\cctname{\thecct}}
\newcommand{\refcct}[1]{{\footnotesize\hyperref[#1]\cctname{\ref{#1}}}}

\newcounter{dct}
\newcommand{\dctname}[1]{D#1}
\newcommand{\adddctclass}{\refstepcounter{dct}\dctname{\thedct}}
\newcommand{\refdct}[1]{{\footnotesize\hyperref[#1]\dctname{\ref{#1}}}}

\newcounter{ect}
\newcommand{\ectname}[1]{E#1}
\newcommand{\addectclass}{\refstepcounter{ect}\ectname{\theect}}
\newcommand{\refect}[1]{{\footnotesize\hyperref[#1]\ectname{\ref{#1}}}}

\newcounter{fct}
\newcommand{\fctname}[1]{F#1}
\newcommand{\addfctclass}{\refstepcounter{fct}\fctname{\thefct}}
\newcommand{\reffct}[1]{{\footnotesize\hyperref[#1]\fctname{\ref{#1}}}}

\newcounter{gct}
\newcommand{\gctname}[1]{G#1}
\newcommand{\addgctclass}{\refstepcounter{gct}\gctname{\thegct}}
\newcommand{\refgct}[1]{{\footnotesize\hyperref[#1]\gctname{\ref{#1}}}}

\newcounter{hct}
\newcommand{\hctname}[1]{H#1}
\newcommand{\addhctclass}{\refstepcounter{hct}\hctname{\thehct}}
\newcommand{\refhct}[1]{{\footnotesize\hyperref[#1]\hctname{\ref{#1}}}}

\newcounter{ict}
\newcommand{\ictname}[1]{I#1}
\newcommand{\addictclass}{\refstepcounter{ict}\ictname{\theict}}
\newcommand{\refict}[1]{{\footnotesize\hyperref[#1]\ictname{\ref{#1}}}}

\newcounter{jct}
\newcommand{\jctname}[1]{J#1}
\newcommand{\addjctclass}{\refstepcounter{jct}\jctname{\thejct}}
\newcommand{\refjct}[1]{{\footnotesize\hyperref[#1]\jctname{\ref{#1}}}}

\newcounter{kct}
\newcommand{\kctname}[1]{K#1}
\newcommand{\addkctclass}{\refstepcounter{kct}\kctname{\thekct}}
\newcommand{\refkct}[1]{{\footnotesize\hyperref[#1]\kctname{\ref{#1}}}}

\newcounter{lct}
\newcommand{\lctname}[1]{L#1}
\newcommand{\addlctclass}{\refstepcounter{lct}\lctname{\thelct}}
\newcommand{\reflct}[1]{{\footnotesize\hyperref[#1]\lctname{\ref{#1}}}}

\begin{table}[h!]
{\small
    \newcommand{\myrow}[9]{$#1$ & $#2$ & $#3$ & $#4$ & $#5$ & $#6$ & $#7$ & #8 & #9}
    \begin{center}
    \begin{tabular}{cccccccccc}
    \toprule
         &
         $p_{1}$ & $p_{-1}$ & $p_{0}$ & $p_{-1,0}$ & $p_{0,1}$ & $p_{-1,1}$ & $p_{-1,0,1}$ & OEIS & $e_n=$ \\
    \midrule
\addactclass &
\myrow{1}{0}{0}{0}{0}{1}{0}{\multirow{2}{*}{\OEISs{A126869}}}{$\binom{n}{\lfloor n/2 \rfloor}$ for $n$ even,}\\
\addactclass &
\myrow{0}{1}{0}{0}{0}{1}{0}{}{$0$ for $n$ odd}\\[2mm]
\addbctclass &
\myrow{1}{0}{1}{0}{0}{1}{0}{\multirow{2}{*}{\OEISs{A002426}}}{\multirow{2}{*}{$[x^n] (1 + x + x^2)^n$}}\\
\addbctclass &
\myrow{0}{1}{1}{0}{0}{1}{0}{}{}\\[2mm]
\addcctclass &
\myrow{0}{0}{0}{1}{0}{1}{0}{\multirow{2}{*}{\OEISs{A084174}}}{\multirow{2}{*}{$2^n - \frac{2n+1-(-1)^n}{4}$}}\\
\addcctclass &
\myrow{0}{0}{0}{0}{1}{1}{0}{}{}\\[2mm]
\adddctclass &
\myrow{0}{0}{0}{0}{0}{1}{1}{\OEISs{A051049}}{$2^{n} - \frac{1 - (-1)^{n}}{2}$}\\[2mm]
\addectclass &
\myrow{0}{0}{1}{0}{0}{1}{1}{\OEISs{A083313}}{$3^{n+1}-2^{n} - \delta_{n,0}$}\\
    \bottomrule
    \end{tabular}
    \end{center}
    \caption{N-Motzkin excursions with special choices of of its N-steps with connections to the \href{https://oeis.org/}{OEIS}. Note that many choices do not correspond to sequences in the OEIS, yet all are algebraic. Moreover, we have not listed the deterministic Dyck and Motzkin paths, neither constant $1$-sequences.}
    \label{tab:NMotzkinDycklike}
}
\end{table}

\begin{table}[h!]
{\small
    \newcommand{\myrow}[9]{$#1$ & $#2$ & $#3$ & $#4$ & $#5$ & $#6$ & $#7$ & #8 & #9}
    \begin{center}
    \begin{tabular}{cccccccccc}
    \toprule
         &
         $p_{1}$ & $p_{-1}$ & $p_{0}$ & $p_{-1,0}$ & $p_{0,1}$ & $p_{-1,1}$ & $p_{-1,0,1}$ & OEIS & $e_n=$ \\
    \midrule
\addfctclass &
\myrow{1}{0}{0}{1}{0}{0}{0}{\multirow{4}{*}{\OEISs{A001405}}}{\multirow{4}{*}{$\binom{n}{\lfloor n/2 \rfloor}$}}\\
\addfctclass &
\myrow{0}{1}{0}{0}{1}{0}{0}{}{}\\
\addfctclass &
\myrow{1}{0}{0}{0}{0}{0}{1}{}{}\\
\addfctclass &
\myrow{0}{1}{0}{0}{0}{0}{1}{}{}\\[2mm]
\addgctclass &
\myrow{1}{0}{1}{1}{1}{0}{0}{\multirow{4}{*}{\OEISs{A001700}}}{\multirow{4}{*}{$\binom{2n+1}{n+1}$}}\\
\addgctclass &
\myrow{1}{0}{1}{0}{1}{0}{1}{}{}\\
\addgctclass &
\myrow{0}{1}{1}{1}{1}{0}{0}{}{}\\
\addgctclass &
\myrow{0}{1}{1}{1}{0}{0}{1}{}{}\\[2mm]
\addhctclass &
\myrow{0}{0}{1}{1}{1}{0}{0}{\multirow{4}{*}{\OEISs{A000244}}}{\multirow{4}{*}{$3^n$}}\\
\addhctclass &
\myrow{0}{0}{1}{1}{0}{0}{1}{}{}\\
\addhctclass &
\myrow{0}{0}{1}{0}{1}{0}{1}{}{}\\
\addhctclass &
\myrow{0}{0}{0}{1}{1}{0}{1}{}{}\\[2mm]
\addictclass &
\myrow{1}{0}{1}{1}{0}{0}{0}{\multirow{8}{*}{\OEISs{A005773}}}{}\\
\addictclass &
\myrow{0}{1}{1}{0}{1}{0}{0}{}{}\\
\addictclass &
\myrow{1}{0}{0}{1}{1}{0}{0}{}{}\\
\addictclass &
\myrow{0}{1}{0}{1}{1}{0}{0}{}{$[x^{n-1}] (1 + x + x^2)^n$}\\
\addictclass &
\myrow{1}{0}{1}{0}{0}{0}{1}{}{$+ [x^n] (1 + x + x^2)^n$}\\
\addictclass &
\myrow{0}{1}{1}{0}{0}{0}{1}{}{}\\
\addictclass &
\myrow{0}{1}{0}{1}{0}{0}{1}{}{}\\
\addictclass &
\myrow{1}{0}{0}{0}{1}{0}{1}{}{}\\
    \bottomrule
    \end{tabular}
    \end{center}
    \caption{N-Motzkin excursions with special choices of of its N-steps with connections to the \href{https://oeis.org/}{OEIS}. Note that many choices do not correspond to sequences in the OEIS, yet all are algebraic. Moreover, we have not listed the deterministic Dyck and Motzkin paths, neither constant $1$-sequences.}
    \label{tab:NMotzkinNoPm11}
}
\end{table}

\begin{table}[h!]
{\small
    \newcommand{\myrow}[9]{$#1$ & $#2$ & $#3$ & $#4$ & $#5$ & $#6$ & $#7$ & #8 & #9}
    \begin{center}
    \begin{tabular}{ccccccccccc}
    \toprule
         &
         $p_{1}$ & $p_{-1}$ & $p_{0}$ & $p_{-1,0}$ & $p_{0,1}$ & $p_{-1,1}$ & $p_{-1,0,1}$ & OEIS & Domain & Steps \\
    \midrule
\addjctclass &
\myrow{1}{0}{0}{1}{0}{0}{1}{\multirow{2}{*}{\OEISs{A151281}}}{Nonnegative & \multirow{2}{*}{$\{-1,1_1, 1_2\}$}}\\
\addjctclass &
\myrow{0}{1}{0}{0}{1}{0}{1}{}{line $\mathbb{N}$}\\[2mm]
\addkctclass &
\myrow{1}{0}{1}{1}{0}{0}{1}{\multirow{4}{*}{\OEISs{A129637}}}{ & \multirow{4}{*}{$\{W, SE, SW, NW \}$}}\\
\addkctclass &
\myrow{0}{1}{1}{0}{1}{0}{1}{}{Triangular}\\
\addkctclass &
\myrow{1}{0}{0}{1}{1}{0}{1}{}{lattice}\\
\addkctclass &
\myrow{0}{1}{0}{1}{1}{0}{1}{}{}\\[2mm]
\addlctclass &
\myrow{1}{0}{1}{1}{1}{0}{1}{\multirow{2}{*}{\OEISs{A151251}}}{First & $\{(0, 0, 1), (0, 1, 0),(1, 1, 0), $}\\
\addlctclass &
\myrow{0}{1}{1}{1}{1}{0}{1}{}{octant $\mathbb{N}^3$ & $(1, 1, 1),(-1, -1, 0)\}$}\\
    \bottomrule
    \end{tabular}
    \end{center}
    \caption{N-Motzkin excursions related to (higher-dimensional) lattice paths that start at the origin and remain in the given domain.}
    \label{tab:NMotzkinPathConnections}
}
\end{table}

\clearpage

\subsection{Subclasses of Motzkin N-meanders in the OEIS}
\label{sec:SubclassesNMotzkinMeanders}

Recall that in Theorem~\ref{theo:NMeanderalgebraic} we proved that the generating function $M^+(1,y;t)$ is also associated with classical meanders with a weighed step set.
Hence, the classes below are all in bijection to one-dimensional lattice paths, which also automatically implies that they are algebraic. 

First note that whenever the N-step $\{-1\}$ is missing, \ie $p_{-1}=0$, the counting sequence is either $a^n$ for $a =2,3,4,5,6$.
This is due to the fact that then we can always append any N-step. 
We omit these cases.
All other sequences with connections to the OEIS are shown in Tables~\ref{tab:NMotzkinMeandersBinom}, \ref{tab:NMotzkinMeanders1D}, and \ref{tab:NMotzkinMeanders3D}.
Again, note that these have been built by checking the first $15$ elements in each sequence.

\begin{table}[h!]
{\small
    \newcommand{\myrow}[9]{$#1$ & $#2$ & $#3$ & $#4$ & $#5$ & $#6$ & $#7$ & #8 & #9}
    \begin{center}
    \begin{tabular}{ccccccccc}
    \toprule
         $p_{1}$ & $p_{-1}$ & $p_{0}$ & $p_{-1,0}$ & $p_{0,1}$ & $p_{-1,1}$ & $p_{-1,0,1}$ & OEIS & $e_n=$ \\
    \midrule
\myrow{0}{1}{0}{0}{0}{1}{0}{\multirow{2}{*}{\OEISs{A001405}}}{\multirow{2}{*}{$\binom{n}{\lfloor n/2 \rfloor}$}}\\
\myrow{0}{1}{0}{0}{0}{0}{1}{}{}\\[2mm]
\myrow{1}{1}{1}{1}{0}{0}{0}{\multirow{4}{*}{\OEISs{A001700}}}{\multirow{4}{*}{$\binom{2n+1}{n+1}$}}\\
\myrow{0}{1}{1}{1}{1}{0}{0}{}{}\\
\myrow{0}{1}{1}{1}{0}{1}{0}{}{}\\
\myrow{0}{1}{1}{1}{0}{0}{1}{}{}\\[2mm]
\myrow{1}{1}{0}{1}{0}{0}{0}{\multirow{7}{*}{\OEISs{A005773}}}{}\\
\myrow{0}{1}{1}{0}{1}{0}{0}{}{}\\
\myrow{0}{1}{0}{1}{1}{0}{0}{}{\multirow{2}{*}{$[x^{n-1}] (1 + x + x^2)^n$}}\\
\myrow{0}{1}{1}{0}{0}{1}{0}{}{\multirow{2}{*}{$+ [x^n] (1 + x + x^2)^n$}}\\
\myrow{0}{1}{0}{1}{0}{1}{0}{}{}\\
\myrow{0}{1}{1}{0}{0}{0}{1}{}{}\\
\myrow{0}{1}{0}{1}{0}{0}{1}{}{}\\
    \bottomrule
    \end{tabular}
    \end{center}
    \caption{N-Motzkin meanders with special choices of of its N-steps with connections to the \href{https://oeis.org/}{OEIS}. Note that many choices do not correspond to sequences in the OEIS, yet all are algebraic. }
    \label{tab:NMotzkinMeandersBinom}
}
\end{table}

\begin{table}[h!]
{\small
    \newcommand{\myrow}[9]{$#1$ & $#2$ & $#3$ & $#4$ & $#5$ & $#6$ & $#7$ & #8 & #9}
    \begin{center}
    \begin{tabular}{cccccccccc}
    \toprule
         $p_{1}$ & $p_{-1}$ & $p_{0}$ & $p_{-1,0}$ & $p_{0,1}$ & $p_{-1,1}$ & $p_{-1,0,1}$ & OEIS & Steps \\
    \midrule
\myrow{1}{1}{0}{0}{1}{0}{0}{\multirow{6}{*}{\OEISs{A151281}}}{\multirow{6}{*}{$\{-1,1_1, 1_2\}$}}\\
\myrow{1}{1}{0}{0}{0}{1}{0}{}{}\\
\myrow{0}{1}{0}{0}{1}{1}{0}{}{}\\
\myrow{1}{1}{0}{0}{0}{0}{1}{}{}\\
\myrow{0}{1}{0}{0}{1}{0}{1}{}{}\\
\myrow{0}{1}{0}{0}{0}{1}{1}{}{}\\
    \bottomrule
    \end{tabular}
    \end{center}
    \caption{N-Motzkin meanders related to one-dimensional lattice paths that start at the origin and remain in the right half-line $\mathbb{N}$.}
    \label{tab:NMotzkinMeanders1D}
}
\end{table}

\begin{table}[h!]
{\small
    \newcommand{\myrow}[9]{$#1$ & $#2$ & $#3$ & $#4$ & $#5$ & $#6$ & $#7$ & #8 & #9}
    \begin{center}
    \begin{tabular}{cccccccccc}
    \toprule
         $p_{1}$ & $p_{-1}$ & $p_{0}$ & $p_{-1,0}$ & $p_{0,1}$ & $p_{-1,1}$ & $p_{-1,0,1}$ & OEIS & Steps \\
    \midrule
\myrow{1}{1}{0}{0}{1}{1}{0}{\multirow{4}{*}{\OEISs{A151162}}}{}\\
\myrow{1}{1}{0}{0}{1}{0}{1}{}{$\{(-1, 0, 0), (1, 0, 0), $}\\
\myrow{1}{1}{0}{0}{0}{1}{1}{}{$(1, 0, 1), (1, 1, 0)\}$}\\
\myrow{0}{1}{0}{0}{1}{1}{1}{}{}\\[2mm]
\myrow{1}{1}{1}{1}{1}{0}{0}{\multirow{6}{*}{\OEISs{A151251}}}{}\\
\myrow{1}{1}{1}{1}{0}{1}{0}{}{}\\
\myrow{0}{1}{1}{1}{1}{1}{0}{}{$\{(-1, -1, 0), (0, 0, 1), (0, 1, 0), $}\\
\myrow{1}{1}{1}{1}{0}{0}{1}{}{$(1, 1, 0), (1, 1, 1)\}$}\\
\myrow{0}{1}{1}{1}{1}{0}{1}{}{}\\
\myrow{0}{1}{1}{1}{0}{1}{1}{}{}\\[2mm]
\myrow{1}{1}{1}{0}{1}{1}{0}{\multirow{8}{*}{\OEISs{A151253}}}{}\\
\myrow{1}{1}{0}{1}{1}{1}{0}{}{}\\
\myrow{1}{1}{1}{0}{1}{0}{1}{}{}\\
\myrow{1}{1}{0}{1}{1}{0}{1}{}{$\{(-1, 0, 0), (0, 0, 1), (1, 0, 0), $}\\
\myrow{1}{1}{1}{0}{0}{1}{1}{}{$(1, 0, 1), (1, 1, 0)\}$}\\
\myrow{1}{1}{0}{1}{0}{1}{1}{}{}\\
\myrow{0}{1}{1}{0}{1}{1}{1}{}{}\\
\myrow{0}{1}{0}{1}{1}{1}{1}{}{}\\[2mm]
\myrow{1}{1}{0}{0}{1}{1}{1}{\OEISs{A151254}}{\begin{tabular}{c}$\{(-1, 0, 0), (1, 0, 0), (1, 0, 1), $ \\$ (1, 1, 0), (1, 1, 1)\}$\end{tabular}}\\
    \bottomrule
    \end{tabular}
    \end{center}
    \caption{N-Motzkin meanders related to three-dimensional lattice paths that start at the origin and remain in the first octant $\mathbb{N}^3$.}
    \label{tab:NMotzkinMeanders3D}
}
\end{table}

\clearpage
\section{The transition automaton of a large N-step model}
\label{app:bigexample}

\begin{figure}[ht!]
\begin{center}
\includegraphics[scale=0.7]{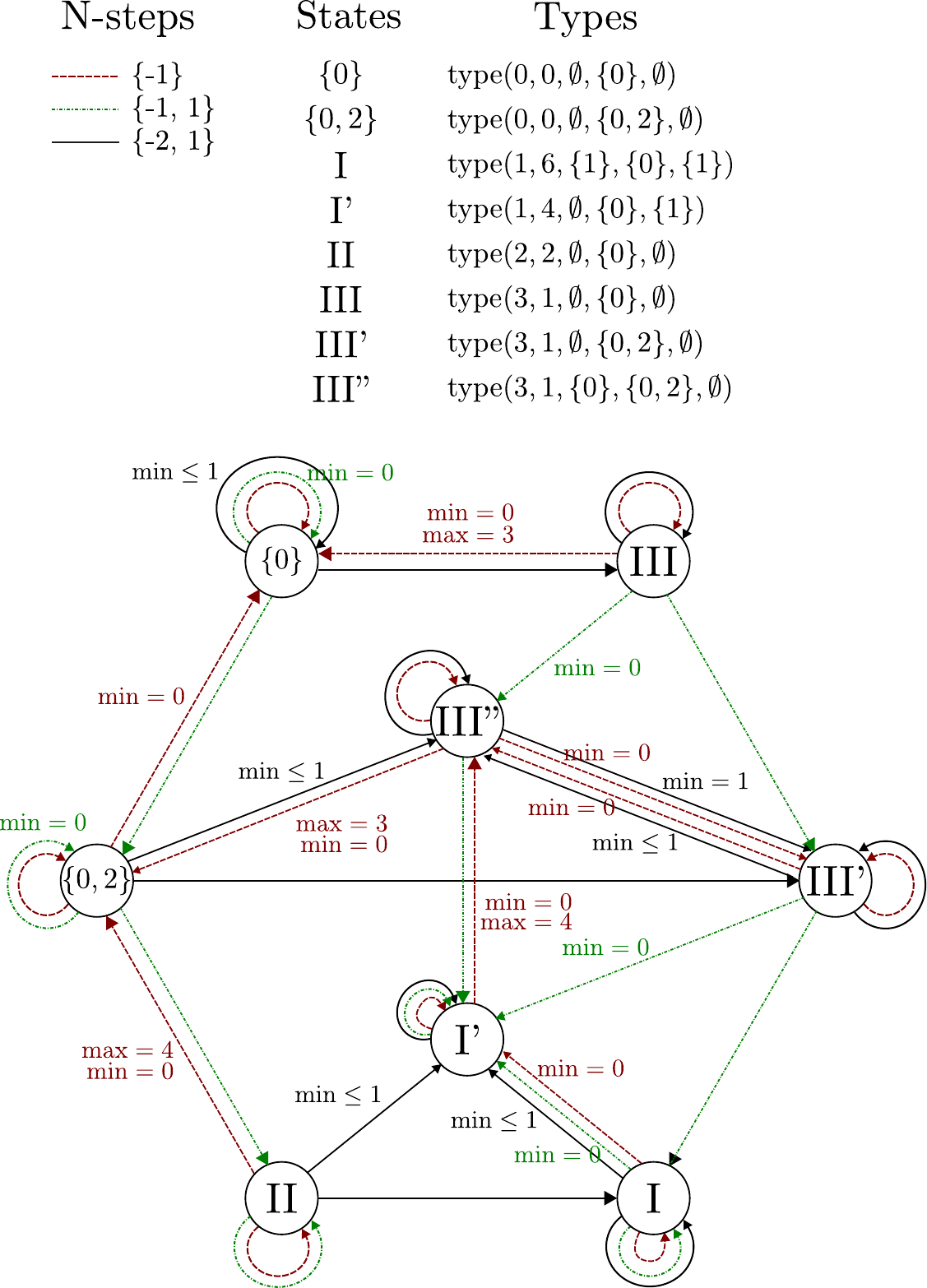}
\caption{Automata from Theorem~\ref{th:negative_min_N_steps} corresponding to the N-excursions on the N-steps $\{\{-1\}, \{-1,1\}, \{-2,1\}\}$.}
\label{fig:bigexample}
\end{center}
\end{figure}

\end{document}